\newcommand{\refcheckize}[1]{%
  \expandafter\let\csname @@\string#1\endcsname#1%
  \expandafter\DeclareRobustCommand\csname relax\string#1\endcsname[1]{%
    \csname @@\string#1\endcsname{##1}\wrtusdrf{##1}}%
  \expandafter\let\expandafter#1\csname relax\string#1\endcsname
}
\numberwithin{equation}{section}
\newtheorem{theorem}{Theorem}[section]
\newtheorem{proposition}[theorem]{Proposition}
\newtheorem{remark}[theorem]{Remark}
\newcommand{\norm}[1]{\left\Vert#1\right\Vert}
\newcommand{\abs}[1]{\left\vert#1\right\vert}
\newcommand{\R}{\mathbb{R}}
\newcommand{\fint}{\,-\mspace{-19.4mu}\int} 
\title{\textbf{Existence of nested polygonal vortex patches for the generalized SQG equation}}
\begin{document}

\author{}  
\date{}
\maketitle
\vskip -1.0cm
\centerline{
         {\large Edison Cuba}\footnote{Department of Mathematics, State University of Campinas, Rua S\'{e}rgio Buarque de Holanda, 651, Cidade Universit\'{a}ria, 13083-859, Campinas-SP, Brazil;
{\url{ ecubah@ime.unicamp.br}} }
     {\large and Lucas C. F. Ferreira}\footnote{Department of Mathematics, State University of Campinas, Rua S\'{e}rgio Buarque de Holanda, 651, Cidade Universit\'{a}ria, 13083-859, Campinas-SP, Brazil;
{\url{ lcff@ime.unicamp.br}} (Corresponding Author)}}

\vskip 1.3cm

\begin{abstract}
\small{\noindent This paper investigates time-periodic solutions of both the surface quasi-geostrophic (SQG) equation and its generalized form (gSQG) within the more singular regime, focusing on the evolution of patch-type structures. Assuming the underlying point vortex equilibrium satisfies a natural nondegeneracy condition, we employ an implicit function argument to construct families of co-rotating nested polygonal vortex patch solutions. These configurations provide precise asymptotic descriptions of the geometry of the evolving patch boundaries. Our results contribute to the broader understanding of coherent rotating structures arising in active scalar equations with singular velocity coupling.

\vskip 6mm

\noindent\textbf{Keywords:} SQG equations; Patch-type structures; Point vortex equilibrium; Nondegeneracy condition; Nested polygonal vortex
\vskip 3mm
\noindent\textbf{MSC 2020:}  35Q35; 35Q86; 76B47; 76B03; 35B10; 37C80}
\end{abstract}

\vspace{3mm}

\tableofcontents



\pagestyle{fancy} \fancyhf{} \renewcommand{\headrulewidth}{0pt}
\chead{\ifthenelse{\isodd{\value{page}}}{Existence of nested polygonal patches for gSQG}{E. Cuba and L. C. F. Ferreira }} \rhead{\thepage}

\newpage

\section{Introduction}\label{section1}
The generalized surface quasi-geostrophic (abbr. gSQG) equation is a two-dimensional partial differential equation that appears in atmospheric science, where
it models evolution of the temperature on the surface of a planet. This study focuses on the dynamics of $2m+1$-fold vortex patch solutions for the two-dimensional gSQG equation. The equation is formulated as follows
\begin{equation}\label{1-1}
	\begin{cases}
		\partial_t\theta + v \cdot \nabla \theta = 0, & \text{in } \mathbb{R}^2 \times (0,T), \\
		v = \nabla^\perp \psi, & \\
		\psi = -(-\Delta)^{-1+\frac{\alpha}{2}} \theta, & \\
		\theta|_{t=0} = \theta_0, & \text{in } \mathbb{R}^2,
	\end{cases}
\end{equation}
where \( 0 \leq \alpha < 2 \), and the operator \( \nabla^\perp \) denotes a counterclockwise rotation by an angle of \( \pi/2 \) in the plane. In this formulation, the scalar function \( \theta: \mathbb{R}^2 \times (0,T) \to \mathbb{R} \) represents the potential temperature and \( v: \mathbb{R}^2 \times (0,T) \to \mathbb{R}^2 \) denotes the velocity field.
\vskip2mm

\noindent
For \( 0 \leq \alpha < 2 \), the nonlocal operator \( (-\Delta)^{-1+\frac{\alpha}{2}} \) is defined by
\begin{equation*}
  (-\Delta)^{-1+\frac{\alpha}{2}} \theta(\boldsymbol{x}) = \int_{\mathbb{R}^2} K_\alpha(\boldsymbol{x} - \boldsymbol{y}) \, \theta(\boldsymbol{y}) \, d\boldsymbol{y},
\end{equation*}
where \( K_\alpha \) denotes the fundamental solution of the operator \( (-\Delta)^{-1 + \frac{\alpha}{2}} \) in the plane. The kernel \( K_\alpha \) takes the form
\begin{equation*}
  K_\alpha(\boldsymbol{x}) =
  \begin{cases}
    \displaystyle\frac{1}{2\pi} \log\left( \frac{1}{|\boldsymbol{x}|} \right), & \text{if } \alpha = 0, \\[8pt]
    \displaystyle\frac{C_\alpha}{2\pi} \cdot \frac{1}{|\boldsymbol{x}|^\alpha}, & \text{if } 0 < \alpha < 2,
  \end{cases}
\end{equation*}
with the constant \( C_\alpha \) given by
\begin{equation*}
  C_\alpha = \frac{\Gamma\left( \frac{\alpha}{2} \right)}{2^{1-\alpha} \Gamma\left( \frac{2 - \alpha}{2} \right)},
\end{equation*}
where \( \Gamma \) denotes the Euler Gamma function.

\vskip2mm

\noindent The system reduces to the classical two-dimensional incompressible Euler equations when \( \alpha = 0 \), and corresponds to the surface quasi-geostrophic (SQG) equation when \( \alpha = 1 \). The full range \( \alpha \in [0,2) \) has been progressively studied in the literature: the cases \( \alpha \in [0,1] \) were analyzed by C\'{o}rdoba et al.~\cite{cordova}, while the more singular regime \( \alpha \in [1,2) \) was explored by Chae et al.~\cite{chae}. The present work focuses on this latter regime.
\vskip2mm

\noindent It has been well established since the seminal work of Yudovich~\cite{Yud} that any bounded and integrable initial datum  gives rise to a unique global-in-time weak solution of~\eqref{1-1}, see also~\cite{Bur2005MR2186035}.  Thanks to Yudovich’s theory, one can rigorously analyze vortex patches, which correspond to vorticity uniformly distributed over a bounded region \( D \), that is, \( \theta_0 = 1_D \). Since the vorticity is transported along the flow, it follows that \( \theta(x, t) = 1_{D_t} \), where \( D_t \) evolves in time and can be described via the contour dynamics formulation.  The global-in-time persistence of boundary regularity is currently established only for the case \( \alpha = 0 \), as initially proven by Chemin~\cite{C}; see also the alternative proof by Bertozzi and Constantin~\cite{B-C}.  The analysis becomes significantly more delicate for the generalized surface quasi-geostrophic (gSQG) equation when \( \alpha \in (0,2) \). In this regime, only local-in-time persistence of regularity in Sobolev spaces has been rigorously established; see~\cite{chae,Gan08,Rod05}. Furthermore, numerical simulations suggest the potential for finite-time singularity formation in this range of \( \alpha \); see~\cite{C,SD19}. In addition, finite-time singularities have been rigorously demonstrated for multi-signed vortex patches in a half-plane configuration for various values of \( \alpha \); see~\cite{KRYZ,KYZ17,Gancedo-1}. Resnick established the existence of global weak solutions in \( L^2 \) for the SQG equation in \cite{Resnick}, exploiting a special cancellation property stemming from the oddness of the Riesz transform. This result was subsequently generalized to the gSQG setting by Chae et al.\ in \cite{chae}. Nevertheless, the question of nonuniqueness remains open (see \cite{vicol} and references therein). More recently, a number of ill-posedness results have been established in both H\"older and Sobolev spaces, addressing issues related either to the evolution of patch boundaries or to the regularity of the initial data; see~\cite{Cor-Zo21,Cor-Zo22,KL23a,KL23}.

\vskip2mm
\noindent For \( 0 < \alpha < 2 \), the velocity field can be recovered using the Biot–Savart law in conjunction with the Green–Stokes theorem, leading to the representation
\begin{equation*}
	v(\boldsymbol{x}, t) =
	\frac{C_\alpha}{2\pi} \int_{\partial D_t} \frac{1}{|x - y|^\alpha} \, d\boldsymbol{y}, \quad 0 < \alpha < 2.
\end{equation*}
Assuming that the patch boundary \( \partial D_t \) is described by a smooth, periodic parametrization \( z(\xi,t) \), with \( \xi \in [0, 2\pi) \), one obtains the contour dynamics formulation. In the range \( 0 < \alpha < 1 \), the evolution of the interface is governed by
\begin{equation*}
	\partial_t z(\xi,t) =
	\frac{C_\alpha}{2\pi} \int_0^{2\pi} \frac{\partial_\tau z(\tau,t)}{|z(\xi,t) - z(\tau, t)|^\alpha} \, d\tau, \quad 0 < \alpha < 1 .
\end{equation*}
For the more singular case \( 1 \leq \alpha < 2 \), the integral is divergent. To address this, one subtracts the tangential component of the velocity, yielding the  contour dynamics equation
\begin{equation*}
	\partial_t z(\xi,t) =
	\frac{C_\alpha}{2\pi} \int_0^{2\pi} \frac{\partial_\tau z(\tau,t) - \partial_\xi z(\xi,t)}{|z(\xi,t) - z(\tau, t)|^\alpha} \, d\tau, \quad 1 \leq \alpha < 2.
\end{equation*}

\vskip2mm
\noindent\noindent
This work investigates a special type of solution called \textit{rotating patches} (also known as relative equilibria or V-states). These patches maintain their shape over time, evolving through rigid body rotation. If we assume the rotation's center is at the origin, their evolution is described by
\begin{equation*}
  \theta(t,\mathbf{x}) = \mathbf{1}_{D_t}(\mathbf{x}),
  \quad \text{with} \quad D_t = Q_{\Omega t} D.
\end{equation*}
The notation \( D \subset \mathbb{R}^2 \)
  refers to a  bounded, smooth domain embedded in the plane. The symbol \( \Omega \in \mathbb{R} \) represents the angular velocity of this rotating domain, a crucial factor that will act as a bifurcation parameter in our upcoming analysis. Furthermore, the patch is said to be \textit{\( m \)-fold symmetric} if it satisfies the rotational invariance condition \( Q_{2\pi/m} D = D \) for some \( m \in \mathbb{N}^\ast \).


\vskip2mm
\noindent The existence and regularity of stationary or uniformly rotating vortex patches for two-dimensional active scalar equations have been the subject of extensive study over the past several decades. For the two-dimensional incompressible Euler equations, the first explicit nontrivial example of a rotating patch was discovered by Kirchhoff~\cite{Kirch}. The author established that an ellipse with semi-axes \( a \) and \( b \) rotates uniformly with an angular velocity of  \( ab/(a+b)^2 \). Numerical evidence for the existence of additional \( m \)-fold symmetric vortex patches (where  \( m \geq 2 \)) was provided by Deem and Zabusky~\cite{DZ78} in the 1970s.
Subsequently, Burbea~\cite{Bur} provided an analytical proof of the existence of \( V \)-states using  local bifurcation theory, constructing local branches of solutions that bifurcate from the Rankine vortex  (i.e., \( \mathbf{1}_{\mathbb{D}}(\mathbf{x}) \)) at the critical angular velocity \( \Omega = \frac{m-1}{2m} \). This method was later revisited by Hmidi, Mateu, and Verdera~\cite{hmidi2}. They not only offered deeper insights into the method but also rigorously demonstrated that the boundaries of these vortex patches are remarkably $C^\infty$-smooth and convex when observed near the bifurcation point. Further investigations into the analyticity of these boundaries were subsequently conducted by Castro, C\'ordoba, and G\'omez-Serrano~\cite{CCG16b}. Meanwhile, the broader global bifurcation structure of these solutions became the focus of work by Hassainia, Masmoudi, and Wheeler~\cite{Has2020MR4156612}.  Moreover, disconnected and nontrivial vortex patches have been constructed using various analytical and numerical methods; see, for instance,~\cite{Gar2021MR4284365, multipole, Hmidi-Mateu}. For additional results concerning simply connected vortex patches in the planar Euler equation, we refer the reader to \cite{del2016MR3507551,Serr,DHHM,HMV,DHHM,HH2} and references therein.


\vskip2mm
\noindent

\noindent
\noindent
    The existence of simply connected \( m \)-fold symmetric vortex patches for the generalized surface quasi-geostrophic (gSQG) equation with \( \alpha \in (0,1) \) was first established by Hassainia and Hmidi~\cite{HH15}, and further developed by De la Hoz, Hassainia, and Hmidi~\cite{de1}. These constructions provide global-in-time, time-periodic solutions in the patch setting, although the broader question of global regularity for general gSQG patches remains open. The approach is based on the Crandall–Rabinowitz local bifurcation theory, yet the required spectral analysis is notably more delicate than in the classical 2D Euler case. Later, Castro, C\'ordoba, and G\'omez-Serrano~\cite{Cas1} extended the existence results to the regime \( \alpha \in [1,2) \), demonstrating the existence of convex, smooth, simply connected vortex patches and proving that their boundaries are \( C^\infty \)-regular. In contrast to the Euler equation, elliptical patches do not serve as rotating solutions for \(\alpha > 0\), a finding detailed in \cite{serrano3}. Furthermore, through a rigorous computer-assisted proof, the existence of convex solutions that eventually lose their convexity in finite time has been confirmed. For results concerning the real analyticity of patch boundaries, see also~\cite{CCG16b}. We refer the reader to~\cite{cao2,Gar2021MR4284365,Gom,Hmidi-Mateu} for additional developments related to vortex patch solutions of the gSQG equation.


\vskip2mm
\noindent While most of the analytical studies focus on connected vortex patches, the existence of traveling and rotating vortex pairs has also attracted significant interest, particularly from a numerical perspective. The earliest numerical explorations of traveling symmetric pairs of simply connected patches in the Euler equations were carried out by Deem and Zabusky~\cite{DZ78}. This was followed by the work of Saffman and Szeto~\cite{SS}, who studied symmetric co-rotating vortex pairs, and by Dritschel~\cite{Dritschel}, who investigated asymmetric patch pairs. On the analytical side, Turkington~\cite{T} provided a variational proof of the existence of $N$-fold co-rotating vortex patches. This approach was subsequently extended by Keady~\cite{Keady}, who demonstrated the existence of translating symmetric vortex pairs, and by Wan~\cite{Wan}, who analyzed the existence and stability of desingularized solutions arising from systems of rotating point vortices. These results were further generalized to the setting of the generalized surface quasi-geostrophic (gSQG) equations with $\alpha \in (0,2)$ by Godard-Cadillac, Gravejat, and Smets~\cite{GGS}.

\vskip2mm

\noindent Although the variational approaches developed in~\cite{T,Keady,GGS} establish the existence of vortex pairs, they offer limited insight into the actual shape of the patches. For the range $\alpha \in [0,1)$, a more direct approach was obtained by Hmidi and Mateu~\cite{Hmidi-Mateu}, who constructed co-rotating and counter-rotating vortex pairs by desingularizing the contour dynamics equations and applying the implicit function theorem. This methodology was further developed and adapted to a wide range of vortex configurations, such as asymmetric vortex pairs~\cite{HH2}, Kármán vortex streets~\cite{G-Kar}, vortex patches associated with   doubly connected V-states for $\alpha \in [0,2)$ as detailed in \cite{cao4}, and V-states in bounded domains for the Euler equations~\cite{cao1}. These developments were further extended in~\cite{cao3} to the case of vortex–wave system. Other studies have considered concentrated multi-vortex structures arranged at the vertices of regular $n$-polygons or following periodic spatial patterns; see, for instance,~\cite{Gar2021MR4284365,Garcia,multipole}. In the regime $\alpha \in [1,2)$, new existence results for symmetric co-rotating and traveling vortex patches were obtained by Cao, Qin, Shan, and Zou~\cite{cao2}. More recently, the existence of asymmetric co-rotating and counter-rotating vortex pairs in this range has also been rigorously established in~\cite{edison2}.


\vskip2mm

\noindent The goal of this paper is to establish the existence of time-periodic, co-rotating solutions consisting of finitely many nested polygonal vortex patches for the generalized surface quasi-geostrophic (gSQG) equations \eqref{1-1}, in the range \(\alpha \in [1,2)\). We focus on arbitrary configurations built from nested regular polygons and exploit their symmetries by incorporating them into the formulation of the problem. Assuming a natural non-degeneracy condition on the underlying point vortex configuration, we apply a suitably adapted version of the implicit function theorem to construct such solutions. Our findings extend and complement the results of \cite{cao2,Cas1} by addressing highly symmetric nested structures beyond previously studied cases. For comparison, see \cite{serrano4} for the existence of stationary multi-layered vortex patch solutions in the Euler case (\(\alpha = 0\)), and \cite{multipole} for nested polygonal solutions to the gSQG equations in the regime \(\alpha \in [0,1)\).

\vskip2mm

\noindent The subsequent theorem, which fully encompasses both Theorem~\ref{thm:polygon} and Theorem~\ref{thm:polygon2}, provides the proof for the existence of co-rotating nested polygons when  $1\leq \alpha<2$.

\begin{theorem}\label{thm:informal-1polygon}
Let $\alpha \in [1,2)$, $\vartheta \in \{0,1\}$, $b_1, b_2 \in (0,1)$, $\gamma_0, \gamma_1 \in \mathbb{R} \setminus \{0\}$, and $d_1, d_2 \in (0,\infty)$. Suppose that  the pair $(\Omega^*, \gamma_2^*) \in (\mathbb{R} \setminus \{0\})^2$ solves system~\eqref{syst-pj2} and satisfies the non-degeneracy conditions ~\eqref{non-deg} and ~\eqref{det-j-poly}  in the case $1<\alpha<2$ and conditions \eqref{non-degb} and ~\eqref{det-j-polyb} when $\alpha=1$.  Then, for all sufficiently small  $\varepsilon > 0$, there exist three strictly convex domains $\mathcal{O}^\varepsilon_0$, $\mathcal{O}^\varepsilon_1$, and $\mathcal{O}^\varepsilon_2$, which are $C^{1+\beta}$-perturbations of the unit disc, along with a real parameter $\gamma_2 = \gamma_2(\varepsilon)$, such that the function
\begin{equation*}
\theta_{0,\varepsilon} = \frac{\gamma_0}{\varepsilon^2 b_0^2} \chi_{\varepsilon b_0 \mathcal{O}_0^\varepsilon} + \sum_{j=1}^{2} \frac{\gamma_j}{\varepsilon^2 b_j^2} \sum_{k=0}^{m-1} \chi_{\mathcal{D}_{jk}^{\varepsilon}},
\end{equation*}
with the sets $
\mathcal{D}_{jk}^\varepsilon$  defined by
\begin{equation}
\mathcal{D}_{jk}^\varepsilon=\begin{cases}
    Q_{\frac{2k\pi}{m}}(\varepsilon b_1 \mathcal{O}_1^\varepsilon + d_1) & \text{if } j=1, \\
Q_{\frac{(2k+\vartheta)\pi}{m}}(\varepsilon b_2 \mathcal{O}_2^\varepsilon + d_2) & \text{if } j=2,
\end{cases}
\end{equation}
produces a uniformly rotating solution to equation~\eqref{1-1} with angular velocity $\Omega(\varepsilon)$. Additionally, the domain
 $\mathcal{O}_0^\varepsilon$ exhibits $m$-fold symmetric, while $\mathcal{O}_1^\varepsilon$ and $\mathcal{O}_2^\varepsilon$ possess $1$-fold symmetry.
\end{theorem}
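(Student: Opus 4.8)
Our approach is to desingularise the underlying point-vortex relative equilibrium and to continue it in the small parameter $\varepsilon$ by means of the implicit function theorem, the base point at $\varepsilon=0$ being the nested point-vortex configuration --- a central vortex of strength $\gamma_0$ at the origin together with two regular $m$-gons of vortices of strengths $\gamma_1$ and $\gamma_2^*$ at radii $d_1$ and $d_2$ --- rotating rigidly at angular speed $\Omega^*$, i.e.\ a solution of \eqref{syst-pj2}. First I would reduce the construction to a single nonlinear functional equation. Encode each rescaled patch $\mathcal{O}_j^\varepsilon$, $j\in\{0,1,2\}$, through a polar parametrisation of its boundary as a small graph over the unit circle, $\xi\mapsto\sqrt{1+2g_j(\xi)}\,e^{i\xi}$, and impose from the start the discrete symmetries --- $g_0$ invariant under $\xi\mapsto\xi+2\pi/m$ and under $\xi\mapsto-\xi$, and $g_1,g_2$ even --- together with the normalisations that each $g_j$ be mean-free and have vanishing first Fourier mode (these fix the areas $b_j^2$ and the centres $d_j$, removing the scaling and translation gauge freedoms). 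Substituting the co-rotating ansatz $\theta(t,x)=\theta_{0,\varepsilon}(Q_{-\Omega t}x)$ into \eqref{1-1} is equivalent to demanding, on each boundary component, that the full velocity minus the solid-rotation field $\Omega x^\perp$ be tangential; writing $v$ via the Green--Stokes boundary representation valid for $1\le\alpha<2$, with the tangential subtraction that renders the kernel integrable (and the borderline logarithmic kernel $\alpha=1$ treated separately), this yields a scalar equation on $\mathbb{T}$ for each patch, automatically mean-free because $\int_{\partial D_j}(v-\Omega x^\perp)\cdot n=0$ by incompressibility. Separating the self-interaction term, whose normal component on a near-disc of size $\varepsilon$ scales like $\varepsilon^{-1-\alpha}$, and renormalising the modes $\ge 2$ of each equation by $\varepsilon^{1+\alpha}$ while keeping the first mode of the two ring-equations unrenormalised, one obtains a map $F=(F_0,F_1,F_2)$ defined near $(0,\Omega^*,\gamma_2^*,0,0,0)$ in $\mathbb{R}^3\times(C^{1+\beta}(\mathbb{T}))^3$ with values in a product of Hölder spaces carrying the prescribed symmetries, whose zeros are exactly the desired rotating configurations. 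The key analytic input is that $F$ extends to a $C^1$ (in fact $C^\infty$) map up to and including $\varepsilon=0$; the interaction terms between distinct rings are smooth since the supports stay uniformly separated, so only the self-interaction estimates are delicate, and these follow the now-standard analysis for gSQG patches with $\alpha\in[1,2)$ (cf.\ \cite{Cas1,cao2,edison2}).

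Next I would identify the base point and its linearisation. At $\varepsilon=0$ the three patches decouple: the central patch degenerates to a disc at the origin, which being rotation-invariant is a co-rotating solution for every $\Omega$, so $F_0$ vanishes there, while for $j=1,2$ the leading part of $F_j$ reduces to the requirement that a point vortex of strength $\gamma_j$ at $d_j$, transported by the velocity of the remaining $2m$ vortices, move along the circle $|x|=d_j$ at angular speed $\Omega$ --- exactly the two scalar equations of \eqref{syst-pj2}, which hold at $(\Omega^*,\gamma_2^*)$; hence $F(0,\Omega^*,\gamma_2^*,0,0,0)=0$. Differentiating at this point, the linearisation is block diagonal: the $g_j$-block is the renormalised linearised self-interaction operator of a gSQG disc, a Fourier multiplier $\mathcal{L}_j$ acting on the admissible modes (positive multiples of $m$ for $j=0$; all modes $\ge 2$ for $j=1,2$, after the evenness and first-mode normalisations), and the $\varepsilon^{1+\alpha}$ renormalisation makes every cross term vanish at $\varepsilon=0$; the $(\Omega,\gamma_2)$-block acts on the two-dimensional span of the first modes of $F_1$ and $F_2$ and coincides, up to nonzero multiplicative constants, with the Jacobian of \eqref{syst-pj2} in $(\Omega,\gamma_2)$. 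The non-degeneracy hypothesis \eqref{non-deg} (respectively \eqref{non-degb} when $\alpha=1$) is precisely the assertion that each $\mathcal{L}_j$ is an isomorphism between the relevant Hölder spaces, and \eqref{det-j-poly} (respectively \eqref{det-j-polyb}) is the non-vanishing of the above Jacobian; therefore the full linearisation of $F$ at the base point is invertible.

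The implicit function theorem then provides $\varepsilon_0>0$ and a $C^1$ branch $\varepsilon\mapsto(\Omega(\varepsilon),\gamma_2(\varepsilon),g_0^\varepsilon,g_1^\varepsilon,g_2^\varepsilon)$, passing through $(\Omega^*,\gamma_2^*,0,0,0)$ at $\varepsilon=0$, along which $F=0$ for $|\varepsilon|<\varepsilon_0$. The domains $\mathcal{O}_j^\varepsilon$ bounded by $\xi\mapsto\sqrt{1+2g_j^\varepsilon(\xi)}\,e^{i\xi}$ are $C^{1+\beta}$-perturbations of the unit disc; since $\|g_j^\varepsilon\|_{C^{1+\beta}}\to 0$ as $\varepsilon\to 0$, their curvature remains strictly positive, giving strict convexity, and they inherit the $m$-fold symmetry (for $j=0$) and the $1$-fold/reflection symmetry (for $j=1,2$) from the function spaces in which the fixed point is found. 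Reassembling these patches with the strengths $\gamma_0,\gamma_1$ and $\gamma_2(\varepsilon)$ and the centres $d_1,d_2$, the outer ring shifted by $\vartheta\pi/m$, produces the co-rotating solution $\theta_{0,\varepsilon}$ of \eqref{1-1} with angular velocity $\Omega(\varepsilon)$.

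The step I expect to be the main obstacle is establishing that, after the correct mode-dependent $\varepsilon$-renormalisation, the singular self-interaction integrals assemble into a functional $F$ that is genuinely $C^1$ with uniformly controlled derivative up to $\varepsilon=0$ and simultaneously in the three coupled shape functions. This is considerably more delicate for $1\le\alpha<2$ than in the range $[0,1)$: the tangential subtraction must be carried through every linearisation, the most singular contributions cancelled explicitly, and the limiting kernel $\alpha=1$ handled by a separate argument. The accompanying spectral computation --- obtaining the closed form of the multipliers $\mathcal{L}_j$ in terms of Gamma functions, and thereby the precise conditions \eqref{non-deg}--\eqref{det-j-polyb} --- is the other substantial ingredient, and is exactly what distinguishes the present highly symmetric nested setting from the single-patch and vortex-pair cases treated in \cite{Cas1,cao2,edison2}.
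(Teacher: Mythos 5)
Your proposal is correct and follows essentially the same route as the paper: desingularize the nested point-vortex equilibrium, write the co-rotating condition as a nonlinear functional of the boundary perturbations and of $(\Omega,\gamma_2)$, extend it $C^1$ up to $\varepsilon=0$, identify the linearization as a block operator whose diagonal blocks are the Fourier multipliers $\gamma_j\,n\sigma_n$ and whose $(\Omega,\gamma_2)$-block is the Jacobian of \eqref{syst-pj2}, and invoke the implicit function theorem, with convexity read off from the curvature. The only notable (and inessential) deviation is your choice of H\"older spaces $C^{1+\beta}$, where the paper instead works in the Sobolev-type scales $X^{N+\alpha-1}$ and $X^{N+\log}$ precisely calibrated so that the multiplier $n\sigma_n\sim n^{\alpha-1}$ (resp.\ $\log n$) yields an isomorphism onto $\mathcal{Y}^{N-1}$.
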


\vskip1mm
\noindent
The remainder of the paper is structured as follows. In Section~\ref{section2}, we derive the boundary equations governing the dynamics of \(2m+1\) time-periodic multipolar vortex patches. This section also introduces the fundamental function spaces that form our analytical framework. Section~\ref{section3} is devoted to the study of key qualitative properties of the functional \(\mathcal{F}^\alpha(\lambda)\), with a focus on the regularity of associated vortex equilibrium functionals and the linearization near equilibrium configurations. In Section~\ref{section4}, we prove the existence of nested polygonal vortex patch solutions for the generalized surface quasi-geostrophic (gSQG) equation with \(\alpha \in (1,2)\), as stated in Theorem~\ref{thm:polygon}. Section~\ref{section5} presents the proof of Theorem~\ref{thm:polygon2}, which establishes the existence of \(2m+1\) nested polygonal vortex patches in the classical SQG setting. As direct applications of Theorems~\ref{thm:polygon} and~\ref{thm:polygon2}, we deduce Theorem~\ref{thm:informal-1polygon}, which demonstrates the existence of co-rotating nested polygonal vortex patch solutions for both the two-dimensional SQG and gSQG equations in the full plane for \(\alpha \in (1,2)\). Additionally, we investigate the convexity of each patch by computing the curvature and derive asymptotic expressions characterizing the geometry of the patch boundaries.


\section{Boundary equations}\label{section2}

In this section, we investigate a configuration consisting of a finite number of point vortices exhibiting co-rotating motion for the gSQG equation. Building on the analytical framework developed in \cite{Cas1,edison2,multipole}, we first derive the contour dynamics equations that characterize the steady states of \(2m+1\) vortex patches governed by the gSQG equation \eqref{1-1} for \( 1 \leq \alpha < 2 \). We then introduce the appropriate functional spaces to ensure the existence  of solutions to  the problem \eqref{1-1}. Finally, we establish our main result, Theorem \ref{thm:informal-1polygon}, by employing a  a modified version of the implicit function theorem.

\smallskip

\subsection{Co-rotating nested polygons patches}

We begin by defining our domains. Let $m \in \mathbb{N}$ be a positive integer. We introduce three bounded, simply connected domains $\mathcal{O}_0^\varepsilon$, $\mathcal{O}_1^\varepsilon$, and $\mathcal{O}_2^\varepsilon$ each of which contains the origin and lies entirely within the ball $B(0,2)$.
A crucial property is that $\mathcal{O}_0^\varepsilon$ exhibits $m$-fold rotational symmetry. This means its shape remains unchanged under rotations by integer multiples of $2\pi/m$
\begin{equation}\label{Nsymm}
Q_{2\pi n/m} \mathcal{O}_0^\varepsilon = \mathcal{O}_0^\varepsilon, \quad \text{for all } n \in \mathbb{Z},
\end{equation}
where $Q_\lambda$ is the counterclockwise rotation operator. Furthermore, we assume that all three domains ($\mathcal{O}_0^\varepsilon$, $\mathcal{O}_1^\varepsilon$, and $\mathcal{O}_2^\varepsilon$) are symmetric with respect to the real axis. With these domains established, and given positive constants $b_0, b_1, b_2$ for scaling, $d_1, d_2$ for translation, and a sufficiently small positive parameter $\varepsilon \in (0, \varepsilon_0)$, we now define their scaled and translated counterparts as follows
\begin{align}
\mathcal{D}_{00}^\varepsilon &:= \varepsilon b_0 \mathcal{O}_0^\varepsilon, \label{Dj0} \\
\mathcal{D}_{1j}^\varepsilon &:= Q_{\frac{2j\pi}{m}} \left( \varepsilon b_1 \mathcal{O}_1^\varepsilon + d_1\mathbf{e}_1 \right), \quad j = 0, \ldots, m-1, \label{Dj1} \\
\mathcal{D}_{2j}^\varepsilon &:= Q_{\frac{(2j+\vartheta)\pi}{m}} \left( \varepsilon b_2 \mathcal{O}_2^\varepsilon + d_2\mathbf{e}_1 \right), \quad j = 0, \ldots, m-1, \label{Dj2}
\end{align}
where \( \varepsilon_0 > 0 \) is chosen sufficiently small to ensure that all sets \( \mathcal{D}_{1j}^\varepsilon \) and \( \mathcal{D}_{2j}^\varepsilon \) are pairwise disjoint. Let \( \gamma_0, \gamma_1, \gamma_2 \in \mathbb{R} \setminus \{0\} \). Consider the initial vorticity given by
\begin{equation}\label{intial-vort}
\theta_{0}^\varepsilon = \frac{\gamma_0}{\varepsilon^2 b_0^2} \chi_{\mathcal{D}_{00}^\varepsilon}
+ \frac{\gamma_1}{\varepsilon^2 b_1^2} \sum_{j=0}^{m-1} \chi_{\mathcal{D}_{1j}^\varepsilon}
+ \frac{\gamma_2}{\varepsilon^2 b_2^2} \sum_{j=0}^{m-1} \chi_{\mathcal{D}_{2j}^\varepsilon}.
\end{equation}
Note that then the vorticity in \eqref{intial-vort} converges to a  point vortex model,  when $\varepsilon \to 0$   and $|\mathcal{O}_j^\varepsilon| \to |\mathbb{D}|$, for $j=0,1,2$
\begin{equation}\label{q0}
\theta^0_0(x) = \pi \left( \gamma_0 \delta_{x_0(0)}(x)
+ \gamma_1 \sum_{k=0}^{m-1} \delta_{x_{1k}(0)}(x)
+ \gamma_2 \sum_{k=0}^{m-1} \delta_{x_{2k}(0)}(x) \right),
\end{equation}
where the vortex locations are given by
\begin{equation}\label{z}
x_0(0) = 0, \quad
x_{jk}(0) :=
\begin{cases}
 Q_{\frac{2k\pi}{m}} d_1\mathbf{e}_1, & \text{if } j = 1,\ 0 \leq k < m, \\
 Q_{\frac{(2k+\vartheta)\pi}{m}} d_2\mathbf{e}_1, & \text{if } j = 2,\ 0 \leq k < m,
\end{cases}
\end{equation}
with \( d_2 > d_1 > 0 \) and \( \vartheta \in \{0,1\} \), where \( \vartheta = 0 \) corresponds to the aligned configuration and \( \vartheta = 1 \) to the staggered configuration. Assuming that $x_0(t)=x_0(0)$ and  $x_{jk}(t) = Q_{\Omega t}x_{jk}(0)$, one may easily check that the system of $2m+1$  equations in \eqref{alg-sysP}  can be reduced to
\begin{equation*}
\begin{split}
\gamma_j{\mathcal{F}}_j^{\alpha}(\lambda)&=0,\quad j=0,1,2,
\end{split}
\end{equation*}
where $\lambda=(\Omega,\gamma_2)$ and
\begin{equation}\label{eq:functional}
\begin{split}
&{\mathcal{F}}_0^{\alpha}(\lambda):=\frac{\widehat{C}_\alpha}{2}\left(\frac{\gamma_1}{d_1^{1+\alpha}}\sum_{k=0}^{m-1}\cos\left(\frac{2k\pi}{m}  \right)+\frac{\gamma_2}{d_2^{1+\alpha}}\sum_{k=0}^{m-1}\cos\left(\frac{2k\pi}{m}  -\frac{\vartheta\pi }{m}\right)\right)\sin (x),
\\
&{\mathcal{F}}_1^{\alpha}(\lambda):=\Omega d_1\sin (x)-\frac{\widehat{C}_\alpha}{2d_1^{1+\alpha}}\left({\gamma_0}+\frac{\gamma_1}{2}\sum_{k=1}^{m-1}\frac{1 -  d ^{\pm 1} \cos\big({\frac{(2k\pm\vartheta)\pi }{m}}\big)}{\big(1+( d ^{\pm 1})^2 - 2 d ^{\pm 1} \cos\big({\frac{(2k\pm\vartheta)\pi }{m}}\big) \big)^{\frac\alpha2+1}}\right)\sin (x),
\\
&{\resizebox{.98\hsize}{!}{${\mathcal{F}}_2^{\alpha}(\lambda):=\Omega d_2\sin (x)-\frac{\widehat{C}_\alpha}{2d_2^{1+\alpha}}\left({\gamma_0}+\gamma_1\displaystyle\sum_{k=0}^{m-1}\frac{1 -  d ^{\pm 1} \cos\big({\frac{(2k\pm\vartheta)\pi }{m}}\big)}{\big(1+( d ^{\pm 1})^2 - 2 d ^{\pm 1} \cos\big({\frac{(2k\pm\vartheta)\pi }{m}}\big) \big)^{\frac\alpha2+1}}+\frac{\gamma_2}{2}\sum_{k=1}^{m-1}\frac{1 }{\big|2\sin\left(\frac{2k\pi  }{m}\right) \big|^{\alpha}}\right)\sin (x)$}},
\end{split}
\end{equation}
with $d:=d_2/d_1$. Notice also that in the limit, we recover the classical point vortex system composed of \( 2m + 1 \) vortices: one  vortex of strength \( 2\pi \gamma_0 \), \( m \) vortices of strength \( 2\pi \gamma_1 \) arranged symmetrically around it, and another \( m \) vortices of strength \( 2\pi \gamma_2 \) uniformly rotating around their centroid with a critical angular velocity given by
\begin{equation*}
\Omega^*_\alpha =\frac{\widehat{C}_\alpha}{2(d_1^{\alpha+2} + d_2^{\alpha+2})} \left(
\gamma_0 + \gamma_1 \left( \tfrac{1}{2} S_\alpha + T_\alpha^-(d, \vartheta) \right)
+ \gamma_2 \left( T_\alpha^+(d, \vartheta) + \tfrac{1}{2} S_\alpha \right)
\right)
\end{equation*}
Assuming that this configuration undergoes a steady, uniform rotation with angular velocity \( \Omega \), it can be shown that the full system of \( 2m + 1 \) equations reduces to the following system of two scalar equations
\begin{equation}\label{syst-pj2}
 {\resizebox{.97\hsize}{!}{$ \Omega d_j-\frac{\widehat{C}_\alpha}{2d_j^{1+\alpha}}\left({\gamma_0}+{\gamma_j}\displaystyle\sum_{k=1}^{m-1}{ \Big(2\sin\big({\frac{k\pi  }{m}}\big)  \Big)^{-\alpha}}+\gamma_{3-j}\displaystyle\sum_{k=0}^{m-1}\frac{1 -  \frac{d_{3-j}}{d_j} \cos\big({\frac{(2k\pm\vartheta)\pi }{m}}\big)}{\left(1+\left( \frac{d_{3-j}}{d_j} \right)^2 - 2\frac{d_{3-j}}{d_j} \cos\left({\frac{(2k\pm\vartheta)\pi }{m}}\right) \right)^{\frac\alpha2+1}}\right)=0, \, j=1,2.$}}
\end{equation}
This system is linear in both \( \Omega \) and \( \gamma_2 \). Therefore, provided a suitable non-degeneracy condition given by
\[( d ^{\alpha+2}-1){\gamma_0}+ \big(\tfrac12S_\alpha  d ^{\alpha+2} - T_\alpha^-( d ,\vartheta)\big)\gamma_1\neq 0.\]
and
\[\det\big(D_\lambda \mathcal F^\alpha(\lambda)\big)=
-\frac{\widehat{C}_\alpha d_1}{2d_2^{\alpha+1}}\Big(\frac{ S_\alpha}{2}- d^{\alpha+2} T_\alpha^+( d ,\vartheta)\Big)\neq 0 ,\]
 one may explicitly solve \eqref{syst-pj2} for \( \Omega \) and \( \gamma_2 \neq 0 \), where
 the constant \(\widehat{C}_\alpha\) is defined as
\begin{equation}\label{eqn:kalpha2}
\widehat{C}_\alpha := \alpha C_\alpha = \frac{2^\alpha \Gamma(1 + \alpha/2)}{\Gamma(1 - \alpha/2)}.
\end{equation}

To analyze this setting further, we reduce the system to three equations posed on the boundaries of \( \mathcal{O}_0^\varepsilon, \mathcal{O}_1^\varepsilon, \mathcal{O}_2^\varepsilon \). Initially, the vorticity \( \theta_{0}^\varepsilon \) involves \( 2m + 1 \) patches, each governed by the dynamics in equation~\eqref{1-1}. Using the symmetry
\begin{equation}\label{eq:d0}
\mathcal{D}_{1j}^\varepsilon = Q_{\frac{2j\pi}{m}} \mathcal{D}_{10}^\varepsilon,
\quad
\mathcal{D}_{2j}^\varepsilon = Q_{\frac{2j\pi}{m}} \mathcal{D}_{20}^\varepsilon,
\end{equation}
we consider all patches rotating rigidly around the origin with angular velocity \( \Omega \), in a counterclockwise direction.

In particular, we look for a solution \( \theta^\varepsilon(t) \) of \eqref{1-1} that corresponds to a uniformly rotating structure

\[
    \theta^\varepsilon(x,t) = \theta_0^\varepsilon\big(Q_{\Omega t}(x-d_i\mathbf{e}_1)\big),
\]
where \( Q_{\Omega t} \) represents a counterclockwise rotation by angle \( \Omega t \), and \( \Omega \) is the (constant) angular velocity of the system.
Substituting the ansatz into \eqref{1-1} leads to the following system of boundary conditions:
\begin{equation*}
\begin{split}
&\gamma_{0}\big(v_0(x)+\Omega x^\perp\big)\cdot  \nabla \theta_{0}^\varepsilon(x) = 0,\quad  \forall\, x \in \partial \mathcal{O}_{00}^\varepsilon, \\
&\gamma_{1}\big(v_0(x)+\Omega (x - d_1 \mathbf{e}_1)^\perp\big)\cdot  \nabla \theta_{0}^\varepsilon(x) = 0,\quad  \forall\, x \in \partial \mathcal{O}_{1n}^\varepsilon,\quad n = 0,\ldots,m-1, \\
&\gamma_{2}\big(v_0(x)+\Omega (x - d_2 \mathbf{e}_1)^\perp\big)\cdot  \nabla \theta_{0}^\varepsilon(x) = 0,\quad  \forall\, x \in \partial \mathcal{O}_{2n}^\varepsilon,\quad n = 0,\ldots,m-1,
\end{split}
\end{equation*}
where \( v^\varepsilon \) denotes the velocity field associated to \( \theta_{0}^\varepsilon \). These conditions ensure that the velocity on the boundary of each patch is purely tangential, matching a rigid co-rotating motion.
This implies the boundary velocity satisfies
\begin{equation}\label{eq:boundary}
\begin{split}
&\gamma_{0}\big(v_0(x)+\Omega x^\perp\big)\cdot \mathbf{n}(x) = 0,\quad  \forall\, x \in \partial \mathcal{O}_{00}^\varepsilon, \\
&\gamma_{1}\big(v_0(x)+\Omega (x - d_1 \mathbf{e}_1)^\perp\big)\cdot \mathbf{n}(x) = 0,\quad  \forall\, x \in \partial \mathcal{O}_{1n}^\varepsilon,\quad n = 0,\ldots,m-1, \\
&\gamma_{2}\big(v_0(x)+\Omega (x - d_2 \mathbf{e}_1)^\perp\big)\cdot \mathbf{n}(x) = 0,\quad  \forall\, x \in \partial \mathcal{O}_{2n}^\varepsilon,\quad n = 0,\ldots,m-1,
\end{split}
\end{equation}
where each boundary \( \partial \mathcal{O}_{j}^\varepsilon \) is parameterized by a function \( z_j(x) \), and \( \mathbf{n}(x) \) denotes the outward unit normal vector at the point \( x \in [0, 2\pi) \).

We are interested in nested polygonal vortex patches close to the unit disk \( \mathbb{D} \), with boundary perturbations of order \( \varepsilon b_j \). The boundary of each patch can be written as:
\[
z_j(x) = \varepsilon b_j R_j(x) (\cos x, \sin x), \quad j = 0, \ldots, m-1,
\]
where
\[
R_j(x) = 1 + \varepsilon |\varepsilon|^\alpha b_j^{1+\alpha} f_j(x), \quad x \in [0, 2\pi).
\]
Our goal is to derive a system of equations for the functions \( R_j(x) \), and reformulate it in terms of the profile functions \( f_j(x) \). Constructing \( 2m+1 \) co-rotating vortex patches for the gSQG equations with \( 1 \leq \alpha < 2 \) is thus equivalent to solving the following nonlinear system, which we explore in the next section
\begin{equation}
\begin{split}
&\Omega \left( \varepsilon b_{0}R_{0}(x)R_0^{\prime }(x)\right)\\
&
{\resizebox{.98\hsize}{!}{$+\frac{\gamma_0
C_\alpha}{\varepsilon^{1+\alpha}b_0^{1+\alpha} }\displaystyle\fint
\frac{\left((R_0(x)R_0(y)+R_0'(x)R_0'(y))%
\sin(x-y)+(R_0(x)R_0'(y)-R_0'(x)R_0(y))\cos(x-y)\right)dy}{\left(
\left(R_0(x)-R_0(y)\right)^2+4R_0(x)R_0  (y)\sin^2\left(\frac{x-y}{2}\right)%
\right)^{\frac{\alpha}{2}}}$}} \\
&
{\resizebox{.98\hsize}{!}{$+\displaystyle\sum_{\ell=1}^2 \gamma_\ell\displaystyle\sum_{k=0}^{m-1}\frac{ C_\alpha}{\varepsilon  b_\ell}
\displaystyle\fint
\frac{\left((R_0(x)R_\ell(y)+R_0'(x)R_\ell'(y))%
\sin(x-y)+(R_0(x)R_\ell'(y)-R_0'(x)R_\ell(y))\cos\left(x-y+2k\pi  /m-\delta_{2\ell}\vartheta\pi /m\right)\right)dy}{|(z_0(x)+(d_0,0))-\nu_{k\ell 0}(z_\ell(y)+(d_\ell,0))|^\alpha}$}} \\
& \qquad =0,
\end{split}
\label{f1}
\end{equation}%
\begin{equation}
\begin{split}
&\Omega \left(\varepsilon b_{j}R_{j}(x)R_j^{\prime }(x)-d_j R_{j}^{\prime
}(x)\cos (x)+d_j R_{j}(x)\sin (x)\right)   \\
&
{\resizebox{.98\hsize}{!}{$+\frac{\gamma_j
C_\alpha}{\varepsilon^{1+\alpha}b_j^{1+\alpha} }\displaystyle\fint
\frac{\left((R_j(x)R_j(y)+R_j'(x)R_j'(y))%
\sin(x-y)+(R_j(x)R_j'(y)-R_j'(x)R_j(y))\cos(x-y)\right)dy}{\left(
\left(R_j(x)-R_j(y)\right)^2+4R_j(x)R_j  (y)\sin^2\left(\frac{x-y}{2}\right)%
\right)^{\frac{\alpha}{2}}}$}} \\
&
{\resizebox{.98\hsize}{!}{$+\frac{\gamma_0
C_\alpha}{\varepsilon b_0}\displaystyle\fint
\frac{\left((R_j(x)R_0(y)+R_j'(x)R_0'(y))%
\sin(x-y)+(R_j(x)R_0'(y)-R_j'(x)R_0(y))\cos(x-y+\delta_{2j}\vartheta\pi /m)\right)dy}{|(z_j(x)+(d_j,0))-\nu_{k 0 j}(z_0(y)+(d_0,0))|^\alpha}$}} \\
&
{\resizebox{.98\hsize}{!}{$+\displaystyle\sum_{\ell=1}^2 \gamma_\ell \displaystyle\sum_{k=\delta_{\ell j}}^{m-1}\frac{ C_\alpha}{\varepsilon  b_\ell}
\displaystyle\fint
\frac{\left((R_j(x)R_\ell(y)+R_j'(x)R_\ell'(y))%
\sin(x-y)+(R_j(x)R_\ell'(y)-R_j'(x)R_\ell(y))\cos\left(x-y+2k\pi  /m-(\delta_{2\ell}-\delta_{2j})\vartheta\pi /m\right)\right)dy}{|(z_j(x)+(d_j,0))-\nu_{k\ell j}(z_\ell(y)+(d_\ell,0))|^\alpha}$}} \\
& \qquad =0,
\end{split}
\label{f2}
\end{equation}%
for \( j = 1, 2 \), we adopt the convention \( d_0 = 0 \), and define
\[
\nu_{k\ell j} := Q_{\frac{2k\pi}{m} - (\delta_{2\ell} - \delta_{2j})\frac{\vartheta\pi}{m}},
\]
where \( \delta_{ij} \) denotes the Kronecker delta
\[
\delta_{ij} :=
\begin{cases}
  1 & \text{if } i = j, \\
  0 & \text{if } i \neq j.
\end{cases}
\]

\subsection{Notation and functional spaces}


We need to establish some notations that will be utilized consistently throughout this paper. Let \( C \) denote a generic positive constant, which may vary from one occurrence to another.  For simplicity, we define
\[
\fint_{0}^{2\pi} f(\tau)\, d\tau \equiv \frac{1}{2\pi} \int_{0}^{2\pi} f(\tau)\, d\tau,
\]
to represent the average (mean) value of the function \( f \) over the unit circle.
 The primary strategy for demonstrating the existence of \(2m+1\)  nested polygonal vortex patches for the gSQG equation \eqref{1-1} with $\alpha\in[1,2)$, involves applying the implicit function theorem to the system of equations represented by \eqref{f1} and \eqref{f2}. To accomplish this, it is crucial to verify that the functional \(\mathcal{F}^\alpha\) meets the necessary regularity conditions. The relevant functional spaces utilized in this analysis are outlined below
\begin{equation*}
	X^N=\left\{ f\in H^N, \ f(x)= \sum\limits_{n=2}^{\infty}a_n\cos(n x)\right\},
\end{equation*}

\begin{equation*}
	Y^{N}=\left\{ g\in H^{N}, \ g(x)= \sum\limits_{n=1}^{\infty}a_n \sin (n x)\right\},
\end{equation*}
and
\begin{equation*}
Y_{0}^{N}=Y^{N}/\text{span}\{\sin (x)\}=\left\{ g\in Y^{N},\
g(x)=\sum\limits_{n=2}^{\infty }a_n\sin(n x)\right\} ,
\end{equation*}%
and for any \( m \geq 2 \), we define the following Banach spaces in the context of the SQG equations  $(\alpha=1)$
\begin{equation*}
\begin{split}
\mathcal{X}^{N+\log} :=X^{N+\log}_m\times X^{N+\log}_1\times X^{N+\log}_1 ,
\end{split}
\end{equation*}
with
\[ X_m^{N+\log}:=
        \big\{f\in X_1^{N+\log}: f\big(Q_{\frac{2\pi}{m}}x\big)=Q_{\frac{2\pi}{m}}f(x)\big\}, \]
        \begin{equation*}
	\begin{split}
 {\resizebox{.98\hsize}{!}{$ X^{N+\log}_1=\left\{ f\in H^N, \ f(x)= \sum\limits_{n=2}^{\infty} a_n\cos(n x), \ \left\|\displaystyle\int_0^{2\pi}\frac{\partial^N f(x-y)-\partial^N f(x)}{|\sin(\frac{y}{2})|}dy\right\|_{L^2}<\infty \right\}.$}}
	\end{split}
\end{equation*}
Similarly, we shall define the Banach spaces for the gSQG equations with $\alpha\in(1,2)$
\[\mathcal{X}^{N+\alpha-1} :=X^{N+\alpha-1}_m\times X^{N+\alpha-1}_1\times X^{N+\alpha-1}_1,\]
with
\[ X_m^{N+\alpha-1}:=
        \big\{f\in X_1^{N+\alpha-1}: f\big(Q_{\frac{2\pi}{m}}x\big)=Q_{\frac{2\pi}{m}}f(x)\big\}, \]

\begin{equation*}
	\begin{split}
 {\resizebox{.98\hsize}{!}{$ X^{N+\alpha-1}_1=\left\{ f\in H^N, \ f(x)= \sum\limits_{n=2}^{\infty}a_n\cos(n x), \ \left\|\displaystyle\int_0^{2\pi}\frac{\partial^N f(x-y)-\partial^N f(x)}{|\sin(\frac{y}{2})|^\alpha}dy\right\|_{L^2}<\infty \right\}.$}}
	\end{split}
\end{equation*}
Additionally, we also need the following spaces
\[\mathcal{Y}^N:=Y^N_m\times Y^N_1\times Y^N_1,\]
with
 \[Y^{N}_m:=
        \big\{g\in  Y_1^{N} : g\big(Q_{\frac{2\pi}{m}}y\big)=g(y)\big\},\]
The norms for the spaces \(X^N\) and \(Y^N\) are defined using the \(H^N\)-norm. In the cases of \( X^{N+\log}_1 \) and \( X^{N+\alpha-1}_1 \), their norms are defined as the sum of the \( H^N \)-norm and an additional integral term, as specified in their respective definitions. It is important to note that, for all \( \nu > 0 \), the following continuous embeddings hold
\[
X^{N+\nu} \hookrightarrow X^{N+\log}_1 \hookrightarrow X^N.
\]
Throughout this paper, we will consistently assume that \( N \geq 3 \).

We denote by \( \mathcal{B}_X \) the unit ball in the Banach space \( X \), defined as
\[
\mathcal{B}_X := \{ f \in X : \|f\|_X < 1 \}.
\]

\begin{equation*}
    \mathcal{B}_X:= \begin{cases}
       &f\in \mathcal{X}^{N+\log}: \|f\|_{\mathcal{X}^{N+\log}(\mathbb{T})}< 1,\qquad \alpha=1, \\
       & f\in \mathcal{X}^{N+\alpha-1}: \|f\|_{\mathcal{X}^{N+\alpha-1}(\mathbb{T})}< 1 ,\qquad \alpha\in(1,2) ,
    \end{cases}
    \end{equation*}

\section{Regularity and linearization analysis on the functional}\label{section3}

We consider domains \(\mathcal{O}_j^\varepsilon\) obtained as perturbations of the unit disk, where the perturbation amplitude scales with \(\varepsilon b_j\).
Specifically,  we consider the following expansion
\begin{equation}\label{conf0}
R_j(x) = 1 + \varepsilon\abs{\varepsilon}^{\alpha} b_j^{1+\alpha} f_j(x).
\end{equation}
The coefficient \(|\varepsilon|^{\alpha}\) in \eqref{conf0} arises from the singularity of the gSQG kernel. To desingularize the system with respect to \(\varepsilon\), we adopt the approach from \cite{Hmidi-Mateu} and transform the equations \eqref{f1} and \eqref{f2} into the following forms
\begin{equation*}
\mathcal{F}^\alpha_0(\varepsilon,f,\lambda):=\mathcal{F}^\alpha_{01}+\mathcal{F}^\alpha_{02}+\mathcal{F}^\alpha_{03},
\end{equation*}
and
\begin{equation*}
\mathcal{F}^\alpha_j(\varepsilon,f,\lambda):=\mathcal{F}^\alpha_{j1}+\mathcal{F}^\alpha_{j2}+\mathcal{F}^\alpha_{j3}+\mathcal{F}^\alpha_{j4},\, \mbox{for} \, j=1,2 ,
\end{equation*}
where
\begin{equation}\label{f01}
\begin{split}
  \mathcal{F}^\alpha_{0 1}&=  \Omega  \,|\varepsilon |^{2+\alpha }b_{0}^{2+\alpha
}f_{0}^{\prime }(x) ,
\end{split}
\end{equation}
\begin{equation}\label{2-2}
	\begin{split}
    		\mathcal{F}^\alpha_{0 2}=&{\resizebox{.94\hsize}{!}{$\frac{C_\alpha \gamma_0}{\varepsilon|\varepsilon|^{\alpha}b_0^{1+\alpha}}\displaystyle\fint\frac{(  1+\varepsilon|\varepsilon|^\alpha b_0^{1+\alpha} f_0( y))\sin( x- y)d y}{\left(
|\varepsilon|^{2+2\alpha}b_0^{2+2\alpha}\left(f_0(x)-f_0(y)\right)^2+4(1+%
\varepsilon|\varepsilon|^\alpha b^{1+\alpha}_0
f_0(x))(1+\varepsilon|\varepsilon|^\alpha b^{1+\alpha}_0
f_0(y))\sin^2\left(\frac{x-y}{2}\right)\right)^{\frac{\alpha}{2}}}$}}\\
&
 {\resizebox{.94\hsize}{!}{$ +C_\alpha \gamma_0\displaystyle\fint \frac{(f'_0( y)-f'_0( x))\cos( x- y)d y}{\left(
|\varepsilon|^{2+2\alpha}b_0^{2+2\alpha}\left(f_0(x)-f_0(y)\right)^2+4(1+%
\varepsilon|\varepsilon|^\alpha b^{1+\alpha}_0
f_0(x))(1+\varepsilon|\varepsilon|^\alpha b^{1+\alpha}_0
f_0(y))\sin^2\left(\frac{x-y}{2}\right)\right)^{\frac{\alpha}{2}}}$}}\\		
		&
 {\resizebox{.94\hsize}{!}{$ +\frac{C_\alpha \gamma_0\varepsilon |\varepsilon|^\alpha b_0^{1+\alpha} f'_0( x)}{  1+\varepsilon|\varepsilon|^\alpha b_0^{1+\alpha} f_0( x)}\displaystyle\fint \frac{(f_0( x)-f_0( y))\cos( x- y)d y}{\left(
|\varepsilon|^{2+2\alpha}b_0^{2+2\alpha}\left(f_0(x)-f_0(y)\right)^2+4(1+%
\varepsilon|\varepsilon|^\alpha b^{1+\alpha}_0
f_0(x))(1+\varepsilon|\varepsilon|^\alpha b^{1+\alpha}_0
f_0(y))\sin^2\left(\frac{x-y}{2}\right)\right)^{\frac{\alpha}{2}}}$}}\\	
		&
{\resizebox{.94\hsize}{!}{$+\frac{C_\alpha \gamma_0\varepsilon|\varepsilon|^\alpha b_0^{1+\alpha} }{  1+\varepsilon|\varepsilon|^\alpha b_0^{1+\alpha} f_0( x)}\displaystyle\fint \frac{f'_0( x)f'_0( y)\sin( x- y)d y}{\left(
|\varepsilon|^{2+2\alpha}b_0^{2+2\alpha}\left(f_0(x)-f_0(y)\right)^2+4(1+%
\varepsilon|\varepsilon|^\alpha b^{1+\alpha}_0
f_0(x))(1+\varepsilon|\varepsilon|^\alpha b^{1+\alpha}_0
f_0(y))\sin^2\left(\frac{x-y}{2}\right)\right)^{\frac{\alpha}{2}}}$}}\\	
		=&
:\mathcal{F}^\alpha_{021}+\mathcal{F}^\alpha_{022}+\mathcal{F}^\alpha_{023}+\mathcal{F}^\alpha_{024} ,
	\end{split}
\end{equation}
\begin{equation}
\begin{split}
\quad&{\resizebox{.98\hsize}{!}{$ \mathcal{F}^\alpha_{03}=\displaystyle\sum_{\ell=1}^2 \gamma_\ell \displaystyle\sum_{k=0}^{m-1}\frac{ C_\alpha
(1+\varepsilon|\varepsilon|^\alpha b_\ell^{1+\alpha}
f_\ell(x))}{\varepsilon b_\ell (1+\varepsilon|\varepsilon|^\alpha
b^{1+\alpha}_0 f_0(x)) }\displaystyle\fint
\frac{(1+\varepsilon|\varepsilon|^\alpha b_\ell^{1+\alpha}
f_\ell(y))\sin\left(x-y+2k\pi  /m-\delta_{2\ell}\vartheta\pi /m\right)dy}{|(z_0(x)+(d_0,0))-\nu_{k \ell 0}(z_\ell(y)+(d_\ell,0))|^\alpha}$}} \\
&
+\displaystyle\sum_{\ell=1}^2 \gamma_\ell \displaystyle\sum_{k=0}^{m-1}\frac{ C_\alpha \varepsilon
|\varepsilon|^{2\alpha}b_\ell^{1+2\alpha}}{1+\varepsilon|\varepsilon|^%
\alpha b^{1+\alpha}_0 f_0(x)}\displaystyle\fint  \frac{
f_\ell'(x)f_\ell'(y)\sin\left(x-y+2k\pi  /m-\delta_{2\ell}\vartheta\pi /m\right)dy}{|(z_0(x)+(d_0,0))-\nu_{k \ell 0}(z_\ell(y)+(d_\ell,0))|^\alpha} \\
&
+{\resizebox{.95\hsize}{!}{$\displaystyle\sum_{\ell=1}^2 \gamma_\ell \displaystyle\sum_{k=0}^{m-1}\frac{ C_\alpha
|\varepsilon|^\alpha (1+\varepsilon|\varepsilon|^\alpha b_\ell^{1+\alpha}
f_\ell(x))}{1+\varepsilon|\varepsilon|^\alpha b^{1+\alpha}_0
f_0(x)}\displaystyle\fint
\frac{b_\ell^{\alpha}(f'_\ell(y)-f'_\ell(x))\cos\left(x-y+2k\pi  /m-\delta_{2\ell}\vartheta\pi /m\right)dy}{|(z_0(x)+(d_0,0))-\nu_{k \ell 0}(z_\ell(y)+(d_\ell,0))|^\alpha}$}}\\
&
{\resizebox{.98\hsize}{!}{$+\displaystyle\sum_{\ell=1}^2 \gamma_\ell \displaystyle\sum_{k=0}^{m-1}\frac{ C_\alpha
\varepsilon|\varepsilon|^{2\alpha}f_0'(x)}{1+\varepsilon|\varepsilon|^\alpha
b^{1+\alpha}_0 f_0(x)}\displaystyle\fint
\frac{b_\ell^{1+2\alpha}(f_\ell(x)-f_\ell(y))\cos\left(x-y+2k\pi  /m-\delta_{2\ell}\vartheta\pi /m\right)dy}{|(z_0(x)+(d_0,0))-\nu_{k \ell 0}(z_\ell(y)+(d_\ell,0))|^\alpha}$}} \\
 =&: \mathcal{F}^\alpha_{031}+ \mathcal{F}^\alpha_{032}+ \mathcal{F}^\alpha_{033}+ \mathcal{F}^\alpha_{034},
\end{split}
\label{2-4}
\end{equation}%
and
\begin{equation}
\begin{split}
& \mathcal{F}^\alpha_{j1}=\Omega \left( |\varepsilon |^{2+\alpha }b_{j}^{2+\alpha
}f_{j}^{\prime }(x)-d_j \left( \frac{\varepsilon|\varepsilon|^{\alpha}b_{j}^{1+\alpha }f_{j}^{\prime }(x)\cos (x)}{1+\varepsilon|\varepsilon|^{\alpha}b_{j}^{1+\alpha }f_{j}(x)}-\sin (x)\right) \right) ,
\end{split}
\label{fj1}
\end{equation}
\begin{equation}
\begin{split}
& {\resizebox{.98\hsize}{!}{$
 \mathcal{F}^\alpha_{j2}=\frac{C_\alpha\gamma_j}{\varepsilon|\varepsilon|^{\alpha}b_j^{1+%
\alpha}}\displaystyle\fint
\frac{(1+\varepsilon|\varepsilon|^\alpha b^{1+\alpha}_j
f_j(y))\sin(x-y)dy}{\left(
|\varepsilon|^{2+2\alpha}b_j^{2+2\alpha}\left(f_j(x)-f_j(y)\right)^2+4(1+%
\varepsilon|\varepsilon|^\alpha b^{1+\alpha}_j
f_j(x))(1+\varepsilon|\varepsilon|^\alpha b^{1+\alpha}_j
f_j(y))\sin^2\left(\frac{x-y}{2}\right)\right)^{\frac{\alpha}{2}}}$}} \\
& +C_{\alpha }\gamma_j\,{\resizebox{.9\hsize}{!}{$
\displaystyle\fint  \frac{(f'_j(y)-f'_j(x))\cos(x-y)dy}{\left(
|\varepsilon|^{2+2\alpha}b_j^{2+2\alpha}\left(f_j(x)-f_j(y)\right)^2+4(1+%
\varepsilon|\varepsilon|^\alpha b^{1+\alpha}_j
f_j(x))(1+\varepsilon|\varepsilon|^\alpha b^{1+\alpha}_j
f_j(y))\sin^2\left(\frac{x-y}{2}\right)\right)^{\frac{\alpha}{2}}}$}} \\
& +{\resizebox{.96\hsize}{!}{$\frac{C_\alpha
\gamma_j\varepsilon|\varepsilon|^\alpha b^{1+\alpha}_j
f'_j(x)}{1+\varepsilon|\varepsilon|^\alpha
b^{1+\alpha}_jf_j(x)}\displaystyle\fint
\frac{(f_j(x)-f_j(y))\cos(x-y)dy}{\left(
|\varepsilon|^{2+2\alpha}b_j^{2+2\alpha}\left(f_j(x)-f_j(y)\right)^2+4(1+%
\varepsilon|\varepsilon|^\alpha b^{1+\alpha}_j
f_j(x))(1+\varepsilon|\varepsilon|^\alpha b^{1+\alpha}_j
f_j(y))\sin^2\left(\frac{x-y}{2}\right)\right)^{\frac{\alpha}{2}}}$}} \\
& +{\resizebox{.96\hsize}{!}{$\frac{C_\alpha\gamma_j\varepsilon|%
\varepsilon|^\alpha b^{1+\alpha}_j }{1+\varepsilon|\varepsilon|^\alpha
b^{1+\alpha}_j f_j(x)}\displaystyle\fint
\frac{f'_j(x)f'_j(y)\sin(x-y)dy}{\left(
|\varepsilon|^{2+2\alpha}b_j^{2+2\alpha}\left(f_j(x)-f_j(y)\right)^2+4(1+%
\varepsilon|\varepsilon|^\alpha b^{1+\alpha}_j
f_j(x))(1+\varepsilon|\varepsilon|^\alpha b^{1+\alpha}_j
f_j(y))\sin^2\left(\frac{x-y}{2}\right)\right)^{\frac{\alpha}{2}}}$}} \\
& =: \mathcal{F}^\alpha_{j21}+ \mathcal{F}^\alpha_{j22}+ \mathcal{F}^\alpha_{j23}+ \mathcal{F}^\alpha_{j24},
\end{split}
\label{2-2b}
\end{equation}%
\begin{equation}
\begin{split}
&  \mathcal{F}^\alpha_{j3}=\frac{\gamma_{0} C_\alpha
(1+\varepsilon|\varepsilon|^\alpha b_{0}^{1+\alpha}
f_{0}(x))}{\varepsilon b_{0} (1+\varepsilon|\varepsilon|^\alpha
b^{1+\alpha}_j f_j(x)) }\displaystyle\fint
\frac{(1+\varepsilon|\varepsilon|^\alpha b_{0}^{1+\alpha}
f_{0}(y))\sin\left(x-y+\delta_{2j}\vartheta\pi /m\right)dy}{|(z_j(x)+(d_j,0))-\nu_{0 0 j}(z_0(y)+(d_0,0))|^\alpha} \\
&
+\frac{\gamma_{0} C_\alpha \varepsilon
|\varepsilon|^{2\alpha}b_{0}^{1+2\alpha}}{1+\varepsilon|\varepsilon|^%
\alpha b^{1+\alpha}_j f_j(x)}\displaystyle\fint  \frac{
f_{0}'(x)f_{0}'(y)\sin\left(x-y+\delta_{2j}\vartheta\pi /m\right)dy}{|(z_j(x)+(d_j,0))-\nu_{0 0 j}(z_0(y)+(d_0,0))|^\alpha} \\
&  +\frac{\gamma_{0} C_\alpha
|\varepsilon|^\alpha (1+\varepsilon|\varepsilon|^\alpha b_{0}^{1+\alpha}
f_{0}(x))}{1+\varepsilon|\varepsilon|^\alpha b^{1+\alpha}_j
f_j(x)}\displaystyle\fint
\frac{b_{0}^{\alpha}(f'_{0}(y)-f'_{0}(x))\cos\left(x-y+\delta_{2j}\vartheta\pi /m\right)dy}{|(z_j(x)+(d_j,0))-\nu_{0 0 j}(z_0(y)+(d_0,0))|^\alpha}\\
& +\frac{\gamma_{0} C_\alpha
\varepsilon|\varepsilon|^{2\alpha}f_j'(x)}{1+\varepsilon|\varepsilon|^\alpha
b^{1+\alpha}_j f_j(x)}\displaystyle\fint
\frac{b_{0}^{1+2\alpha}(f_{0}(x)-f_{0}(y))\cos\left(x-y+\delta_{2j}\vartheta\pi /m\right)dy}{|(z_j(x)+(d_j,0))-\nu_{0 0 j}(z_0(y)+(d_0,0))|^\alpha} \\
& =: \mathcal{F}^\alpha_{j31}+ \mathcal{F}^\alpha_{j32}+ \mathcal{F}^\alpha_{j33}+ \mathcal{F}^\alpha_{j34},
\end{split}
\label{2-3b}
\end{equation}%
\begin{equation}
\begin{split}
\mathcal{F}^\alpha_{j4}=&{\resizebox{.9\hsize}{!}{$ \displaystyle\sum_{\ell=1}^2 \gamma_\ell \displaystyle\sum_{k=\delta_{\ell j}}^{m-1}\frac{ C_\alpha
(1+\varepsilon|\varepsilon|^\alpha b_\ell^{1+\alpha}
f_\ell(x))}{\varepsilon b_\ell (1+\varepsilon|\varepsilon|^\alpha
b^{1+\alpha}_j f_j(x)) }\displaystyle\fint
\frac{(1+\varepsilon|\varepsilon|^\alpha b_\ell^{1+\alpha}
f_\ell(y))\sin\left(x-y+2k\pi  /m-(\delta_{2\ell}-\delta_{2j})\vartheta\pi /m\right)dy}{|(z_j(x)+(d_j,0))-\nu_{k \ell j}(z_\ell(y)+(d_\ell,0))|^\alpha}$}} \\
&
{\resizebox{.9\hsize}{!}{$ +\displaystyle\sum_{\ell=1}^2 \gamma_\ell \displaystyle\sum_{k=\delta_{\ell j}}^{m-1}\frac{ C_\alpha \varepsilon
|\varepsilon|^{2\alpha}b_\ell^{1+2\alpha}}{1+\varepsilon|\varepsilon|^%
\alpha b^{1+\alpha}_j f_j(x)}\displaystyle\fint  \frac{
f_\ell'(x)f_\ell'(y)\sin\left(x-y+2k\pi  /m-(\delta_{2\ell}-\delta_{2j})\vartheta\pi /m\right)dy}{|(z_j(x)+(d_j,0))-\nu_{k \ell j}(z_\ell(y)+(d_\ell,0))|^\alpha}$}} \\
&  {\resizebox{.9\hsize}{!}{$ +\displaystyle\sum_{\ell=1}^2 \gamma_\ell \displaystyle\sum_{k=\delta_{\ell j}}^{m-1}\frac{ C_\alpha
|\varepsilon|^\alpha (1+\varepsilon|\varepsilon|^\alpha b_\ell^{1+\alpha}
f_\ell(x))}{1+\varepsilon|\varepsilon|^\alpha b^{1+\alpha}_j
f_j(x)}\displaystyle\fint
\frac{b_\ell^{\alpha}(f'_\ell(y)-f'_\ell(x))\cos\left(x-y+2k\pi  /m-(\delta_{2\ell}-\delta_{2j})\vartheta\pi /m\right)dy}{|(z_j(x)+(d_j,0))-\nu_{k \ell j}(z_\ell(y)+(d_\ell,0))|^\alpha}$}}\\
& {\resizebox{.9\hsize}{!}{$ +\displaystyle\sum_{\ell=1}^2 \gamma_\ell \displaystyle\sum_{k=\delta_{\ell j}}^{m-1}\frac{ C_\alpha
\varepsilon|\varepsilon|^{2\alpha}f_j'(x)}{1+\varepsilon|\varepsilon|^\alpha
b^{1+\alpha}_j f_j(x)}\displaystyle\fint
\frac{b_\ell^{1+2\alpha}(f_\ell(x)-f_\ell(y))\cos\left(x-y+2k\pi  /m-(\delta_{2\ell}-\delta_{2j})\vartheta\pi /m\right)dy}{|(z_j(x)+(d_j,0))-\nu_{k \ell j}(z_\ell(y)+(d_\ell,0))|^\alpha}$}} \\
 =&: \mathcal{F}^\alpha_{j41}+ \mathcal{F}^\alpha_{j42}+ \mathcal{F}^\alpha_{j43}+ \mathcal{F}^\alpha_{j44},
\end{split}
\label{2-4b}
\end{equation}%
where the terms $\mathcal{F}^\alpha_{j}\left(\varepsilon, f, \lambda \right)$,  for $j=1,2$, with the convention $d_0=0$ and
\[\nu_{k\ell j}=Q_{2k\pi  /m-(\delta_{2\ell}-\delta_{2j})\vartheta\pi /m} ,\]
 where $\delta_{ij}$ is  the Kronecker delta function. Define the nonlinear functional as follows
$$
\mathcal{F}^\alpha(\varepsilon,f,\lambda):=\big(\mathcal{F}^\alpha_0(\varepsilon,f,\lambda),\mathcal{F}^\alpha_1(\varepsilon,f,\lambda), \mathcal{F}^\alpha_2(\varepsilon,f,\lambda)\big),
$$
where $\mathcal{F}^\alpha_0$ is given by \eqref{f01}--\eqref{2-4} and $\mathcal{F}^\alpha_j$     by  \eqref{fj1}--\eqref{2-4b}, $f=(f_0,f_1,f_2)$, $\lambda=(\Omega,\gamma_2)$, for the gSQG equations with $\alpha\in[1,2)$.  This leads to a nonlinear
system
\begin{equation*}
    \mathcal{F}^\alpha(\varepsilon,f,\lambda)=0,
\end{equation*}
where $\mathcal{F}^\alpha_{j}:\left(-\varepsilon_0, \varepsilon_0\right) \times \mathcal{B}_X\times \Lambda \rightarrow \mathcal{Y}^{N-1}$. Here $\Lambda$ is a small neighborhood of $\lambda^\ast$, which is the solution to the point vortex system \eqref{alg-sysP}.

To apply the implicit function theorem at \(\varepsilon = 0\), it is essential to extend the functions $\mathcal{F}^\alpha$
which were introduced in Section \ref{section2}, to a domain that includes \(\varepsilon \leq 0\), while ensuring that these functions maintain \(C^1\) regularity. The first step in this process involves verifying the continuity of \(\mathcal{F}^\alpha\).

\begin{proposition}\label{p3-1}
	There exists $\varepsilon_0>0$ and a small neighborhood $\Lambda$ of $\lambda^*$  such that the functional $\mathcal{F}^\alpha$ can be extended from $\left(-\varepsilon_0, \varepsilon_0\right) \times \mathcal{B}_X\times \Lambda$ to $ \mathcal{Y}^{N-1}$ as  a continuous functional.
\end{proposition}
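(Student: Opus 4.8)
\noindent\emph{Proof strategy.} The plan is to split $\mathcal{F}^\alpha$ into three structurally distinct groups and treat each separately. \textbf{(a) Algebraic terms.} The pieces $\mathcal{F}^\alpha_{01}$, $\mathcal{F}^\alpha_{j1}$ of \eqref{f01}, \eqref{fj1} are finite sums, products and quotients of $f_j$, $f_j'$, $\cos x$, $\sin x$ and powers of $\varepsilon$; I would first note that for $\varepsilon_0$ small and $f\in\mathcal B_X$ the denominators $1+\varepsilon|\varepsilon|^\alpha b_j^{1+\alpha}f_j(x)$ stay bounded away from $0$ in $L^\infty$ (using $H^N\hookrightarrow L^\infty$), so, since $N-1\ge 2$ makes $H^{N-1}(\mathbb{T})$ a Banach algebra, these define continuous maps into $H^{N-1}$; the dependence on $\lambda=(\Omega,\gamma_2)$ is linear, and the extension to $\varepsilon\le 0$ is innocuous because $\varepsilon\mapsto\varepsilon|\varepsilon|^\alpha$ and $\varepsilon\mapsto|\varepsilon|^{2+2\alpha}$ are continuous on $\mathbb{R}$. \textbf{(b) Mutual-interaction integrals.} For $\mathcal{F}^\alpha_{03}$, $\mathcal{F}^\alpha_{j3}$, $\mathcal{F}^\alpha_{j4}$ of \eqref{2-4}, \eqref{2-3b}, \eqref{2-4b}, the key point is that since $d_0=0<d_1<d_2$ and $\varepsilon_0$ is fixed so the rotated copies $\mathcal{D}^\varepsilon_{1j},\mathcal{D}^\varepsilon_{2j}$ are pairwise disjoint, there is a uniform lower bound $|(z_j(x)+(d_j,0))-\nu_{k\ell j}(z_\ell(y)+(d_\ell,0))|\ge c_0>0$ over all $x,y\in\mathbb{T}$, all $(\varepsilon,f,\lambda)$ in the domain, and all triples $(k,\ell,j)$ occurring there (the diagonal $k=0$, $\ell=j$ is excluded); hence these kernels and their $x$-derivatives up to order $N-1$ are bounded functions depending smoothly on $x$, $y$, $\varepsilon$ and on $f_i(x),f_i(y),f_i'(x),f_i'(y)$, and differentiating under the integral and applying Moser-type product and composition estimates in $H^{N-1}$ yields a continuous (in fact smooth) extension to $\varepsilon\in(-\varepsilon_0,\varepsilon_0)$ with values in $H^{N-1}$.

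The heart of the matter is \textbf{(c) the self-interaction integrals} $\mathcal{F}^\alpha_{02},\mathcal{F}^\alpha_{j2}$ of \eqref{2-2}, \eqref{2-2b}. I would write the offending denominator as $\big(2|\sin\tfrac{x-y}{2}|\big)^{\alpha}G_j(\varepsilon,x,y)^{\alpha/2}$ with
\[
G_j(\varepsilon,x,y)=\big(1+\varepsilon|\varepsilon|^\alpha b_j^{1+\alpha}f_j(x)\big)\big(1+\varepsilon|\varepsilon|^\alpha b_j^{1+\alpha}f_j(y)\big)+|\varepsilon|^{2+2\alpha}b_j^{2+2\alpha}\Big(\tfrac{f_j(x)-f_j(y)}{2\sin\frac{x-y}{2}}\Big)^{2},
\]
and exploit that, since $N\ge 3$, the difference quotient $(f_j(x)-f_j(y))/(2\sin\frac{x-y}{2})$ extends continuously across the diagonal and is bounded, so $G_j\to 1$ uniformly and (using $|\varepsilon|^{2+2\alpha}=(\varepsilon|\varepsilon|^\alpha)^2$ and continuity of $\varepsilon\mapsto\varepsilon|\varepsilon|^\alpha$) one gets an expansion $G_j^{-\alpha/2}=1+\varepsilon|\varepsilon|^\alpha H_j(\varepsilon,x,y)$ with $H_j$ bounded and continuous in all arguments. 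Substituting into the first subterm $\mathcal{F}^\alpha_{j21}$ and expanding the numerator, the singular prefactor $(\varepsilon|\varepsilon|^\alpha)^{-1}$ ends up multiplying
\[
\fint\frac{\sin(x-y)}{\big(2|\sin\frac{x-y}{2}|\big)^{\alpha}}\,dy\;+\;\varepsilon|\varepsilon|^\alpha\fint\frac{\mathcal{H}_j(\varepsilon,x,y)\,\sin(x-y)}{\big(2|\sin\frac{x-y}{2}|\big)^{\alpha}}\,dy ,
\]
and the first integral vanishes identically because, after the change of variables $u=x-y$, the integrand $\sin u\big(2|\sin\frac u2|\big)^{-\alpha}\sim\operatorname{sgn}(u)|u|^{1-\alpha}$ is odd and absolutely integrable on $(-\pi,\pi)$. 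Hence the prefactor is absorbed and $\mathcal{F}^\alpha_{j21}$ reduces to a finite, $\varepsilon$-continuous convolution-type operator applied to $f_j$ (and to products involving $H_j$), which maps $\mathcal B_X\to H^{N-1}$ continuously, gaining $2-\alpha$ derivatives when $\alpha\in(1,2)$ and being of Hilbert-transform type when $\alpha=1$. The subterms $\mathcal{F}^\alpha_{j22},\mathcal{F}^\alpha_{j23},\mathcal{F}^\alpha_{j24}$ already carry explicit powers of $\varepsilon$ or only the regular combination $f_j'(y)-f_j'(x)$; the delicate one is $\mathcal{F}^\alpha_{j22}$, which at $\varepsilon=0$ equals $C_\alpha\gamma_j\fint\big(f_j'(y)-f_j'(x)\big)\cos(x-y)\big(2|\sin\frac{x-y}{2}|\big)^{-\alpha}\,dy$, and whose $(N-1)$-st $x$-derivative, after the change $y\mapsto x-y$, equals $C_\alpha\gamma_j\fint\big(\partial^{N}f_j(x-y)-\partial^{N}f_j(x)\big)\cos(y)|\sin\tfrac{y}{2}|^{-\alpha}\,dy$ — which lies in $L^2$ precisely by the defining condition of $X^{N+\alpha-1}_1$ (the factor $\cos y$ contributing only a bounded-operator error, and for $\alpha=1$ the corresponding logarithmic loss being exactly the one encoded in $X^{N+\log}_1$). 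This is why the domain must be the ball $\mathcal B_X$ in $\mathcal X^{N+\alpha-1}$ (resp.\ $\mathcal X^{N+\log}$) rather than in $(H^N)^3$. Finally, I would sum the finitely many pieces and note that the $m$-fold and reflection symmetries built into \eqref{Nsymm} and the ansatz \eqref{conf0} place the range in $\mathcal Y^{N-1}$ (namely $Y^{N-1}_m$ for the $j=0$ component and $Y^{N-1}_1$ for $j=1,2$), concluding; shrinking $\varepsilon_0$ and $\Lambda$ makes all smallness and lower-bound requirements uniform.

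The main obstacle will be part (c): first, establishing that the $(\varepsilon|\varepsilon|^\alpha)^{-1}$-weighted self-interaction integral has a finite limit as $\varepsilon\to 0$, which hinges on the exact parity cancellation of the $\varepsilon$-independent piece together with enough regularity of $\varepsilon\mapsto\varepsilon|\varepsilon|^\alpha$ (where $\alpha\ge 1$ is used); and second, obtaining the $H^{N-1}$-bound for the term $\mathcal{F}^\alpha_{j22}$, which is exactly what forces the refined spaces $\mathcal X^{N+\alpha-1}$ and $\mathcal X^{N+\log}$ to enter the picture. Everything in parts (a) and (b), and the cross-term bookkeeping, is comparatively routine.
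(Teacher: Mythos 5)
Your overall strategy coincides with the paper's: isolate the self-interaction terms as the only genuinely singular ones, desingularize the kernel by expanding around its $\varepsilon=0$ value, kill the $(\varepsilon|\varepsilon|^\alpha)^{-1}$-weighted leading term by the oddness of $\sin(x-y)\,\big|2\sin\tfrac{x-y}{2}\big|^{-\alpha}$, and control $\partial^{N-1}$ of the $f_j'(y)-f_j'(x)$ term exactly by the integral condition defining $X^{N+\alpha-1}_1$ (resp. $X^{N+\log}_1$). Your factorization of the denominator as $\big(2|\sin\tfrac{x-y}{2}|\big)^{\alpha}G_j^{\alpha/2}$ with $G_j=1+O(\varepsilon|\varepsilon|^\alpha)$ is an equivalent, arguably cleaner, packaging of the paper's repeated use of the Taylor identity $\frac{1}{(A+B)^{\alpha}}=\frac{1}{A^{\alpha}}-\alpha\int_0^1\frac{B}{(A+tB)^{1+\alpha}}\,dt$ with $A=4\sin^2\tfrac{x-y}{2}$; both produce the same limiting operator $C_\alpha\gamma_j(1-\tfrac{\alpha}{2})\fint f_j(y)\sin(x-y)\,|4\sin\tfrac{x-y}{2}|^{-\alpha}\,dy$ plus an $\varepsilon|\varepsilon|^\alpha$-remainder.

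There is, however, one concrete gap in your part (b). The mutual-interaction terms are \emph{not} manifestly regular: the first subterms $\mathcal{F}^\alpha_{031}$, $\mathcal{F}^\alpha_{j31}$, $\mathcal{F}^\alpha_{j41}$ carry an explicit prefactor $\frac{1}{\varepsilon b_\ell}$, so a uniform positive lower bound on $|(z_j(x)+(d_j,0))-\nu_{k\ell j}(z_\ell(y)+(d_\ell,0))|$ together with smoothness of the kernel is not enough to conclude continuity at $\varepsilon=0$. You need the additional cancellation that at $\varepsilon=0$ the denominator $A_{k\ell j}$ (resp. $A_{k\ell 0}$, $A_{00j}$) is independent of $y$, so that $\fint \sin(x-y+c)\,A^{-\alpha/2}\,dy=0$ and the $O(\varepsilon^{-1})$ contribution vanishes; only then does the Taylor expansion in $\varepsilon$ of the denominator produce a finite limit. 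This is precisely what the paper does in \eqref{333}, \eqref{fj31} and \eqref{f031}, and it is the same parity/periodicity mechanism you correctly invoke in part (c) for the self-interaction integrals — it just also has to be applied here. With that repaired, your argument is complete; the remaining continuity-in-$f$ estimates (the paper's $I_1$, $I_2$ decomposition and the mean-value computation \eqref{2-7}) are asserted rather than carried out in your sketch, but the route you indicate is the one the paper follows.
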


\begin{proof}
 Using  \eqref{2-2}, for $j=1,2$, we can decompose the functional $\mathcal{F}^\alpha_{j1}:\left(-\varepsilon_0, \varepsilon_0\right) \times \mathcal{B}_X\times \Lambda \rightarrow \mathcal{Y}^{N-1}$
as follows
\begin{equation}
\begin{split}
 \mathcal{F}^\alpha_{j1}=\Omega d_j\sin (x)+\varepsilon|\varepsilon|^\alpha \mathcal{R}_{j1}(\varepsilon,f_j) ,
\end{split}
\label{2-1b}
\end{equation}
where $\mathcal{R}^\alpha_{j1}$ is continuous.  Consequently, we can infer that \(\mathcal{F}^\alpha_{j 1}\) is continuous. Now we consider $\mathcal{F}^\alpha_{j2}$,
note that  the most singular component in $\mathcal{F}^\alpha_{j2}$  is given by $\mathcal{F}^\alpha_{j21}$. To deal with this term we proceed as follows
\begin{equation*}
   {\resizebox{.96\hsize}{!}{$ \mathcal{F}^\alpha_{j21}:=\frac{C_\alpha \gamma_j}{\varepsilon|\varepsilon|^{\alpha}b_j^{1+\alpha}}\displaystyle\fint  \frac{(  1+\varepsilon|\varepsilon|^{\alpha} b_{j}^{1+\alpha} f_j( y))\sin( x- y)}{\left(
|\varepsilon|^{2+2\alpha}b_j^{2+2\alpha}\left(f_j(x)-f_j(y)\right)^2+4(1+%
\varepsilon|\varepsilon|^\alpha b^{1+\alpha}_j
f_j(x))(1+\varepsilon|\varepsilon|^\alpha b^{1+\alpha}_j
f_j(y))\sin^2\left(\frac{x-y}{2}\right)\right)^{\frac{\alpha}{2}}} d y$}},
\end{equation*}
the potential singularity arising from \(\varepsilon=0\) may only manifest when we compute the zeroth-order derivative of \(\mathcal{F}^\alpha_{j21}\). Throughout the proof, we will often use the following Taylor's formula
\begin{equation}\label{taylor}
	\frac{1}{(A+B)^\alpha}=\frac{1}{A^\alpha}-\alpha\int_0^1\frac{B}{(A+tB)^{1+\alpha}}dt.
\end{equation}
Now, for a particular choice of  $A$ and $B$ given by
\begin{equation*}
A(x,y):=4\sin ^{2}\left( \frac{x-y}{2}\right)
\end{equation*}%
and
\begin{equation*}
{\resizebox{.95\hsize}{!}{$ B(f_j,x,y):=|\varepsilon |^{1+\alpha}b_{j}^{1+\alpha}(f_{j}(x)-f_{j}(y))^{2}+\sin ^{2}\left( \frac{%
x-y}{2}\right) \left( 4
f_{j}(x)+4f_{j}(y)+4|\varepsilon |^{1+\alpha}b_{j}^{1+\alpha}f_{j}(x)f_{j}(y)\right),$}}
\end{equation*}%
Then, we decompose the kernel into two parts
\begin{equation}\label{2-6}
	\begin{split}
		\mathcal{F}^\alpha_{j21}&{\resizebox{.95\hsize}{!}{$	=\frac{C_{\alpha}\gamma_j}{\varepsilon|\varepsilon|^{\alpha}b_j^{1+\alpha}}\displaystyle\fint\frac{ \sin{( x- y)}d y}{\left(  A( x, y)+\varepsilon|\varepsilon|^{\alpha}b_j^{1+\alpha}B\left( f_j, x, y\right)\right)^{\frac{\alpha}{2}}}+C_\alpha\gamma_j\displaystyle\fint \frac{f_j( y)\sin{( x- y)}d y}{\left(   A( x, y)+\varepsilon|\varepsilon|^{\alpha}b_j^{1+\alpha} B\left(   f_j,  x,  y\right)\right)^{\frac{\alpha}{2}}}$}}\\
		&=\frac{C_\alpha \gamma_j}{\varepsilon|\varepsilon|^{\alpha}b_j^{1+\alpha}}\int\!\!\!\!\!\!\!\!\!\; {}-{} \frac{\sin( x- y)d y}{ A(x,y)^{\frac{\alpha}{2}}}-\frac{\alpha C_\alpha \gamma_j}{2}\int\!\!\!\!\!\!\!\!\!\; {}-{} \int_0^1 \frac{B\sin( x- y)dt d y}{\left(  A( x,  y)+t\varepsilon|\varepsilon|^\alpha b_j^{1+\alpha}B\left( f_j,  x,  y\right)\right)^{\frac{\alpha+2}{2}}}\\
  &\qquad \qquad+C_\alpha\gamma_j\int\!\!\!\!\!\!\!\!\!\;{}-{} \frac{f_j( y)\sin{( x- y)}d y}{\left(   A( x, y)+\varepsilon|\varepsilon|^{\alpha} b_j^{1+\alpha}B\left(   f_j,  x,  y\right)\right)^{\frac{\alpha}{2}}}\\
		&=-\frac{\alpha C_\alpha \gamma_j}{2}\int\!\!\!\!\!\!\!\!\!\; {}-{} \int_0^1 \frac{B\sin( x- y)dt d y}{(A(x,y))^{\frac{\alpha}{2}+1}}\\
  &
\ \ \ \ + \frac{C_\alpha\gamma_j\varepsilon|\varepsilon|^\alpha \alpha(\alpha+2)}{4}\int\!\!\!\!\!\!\!\!\!\; {}-{}\int_0^1 \int_0^1 \frac{t B^{2}\sin( x- y)d\tau dt d y}{\left(  A( x,  y)+t\tau\varepsilon|\varepsilon|^\alpha b_j^{1+\alpha}B\left( f_j,  x,  y\right)\right)^{\frac{\alpha+4}{2}}}\\
		&\ \ \ \ {\resizebox{.9\hsize}{!}{$	+C_\alpha\gamma_j\displaystyle\fint\frac{f_j( y)\sin( x- y)d y}{A(x,y)^{\frac{\alpha}{2}}}-\frac{C_\alpha \alpha \gamma_j\varepsilon|\varepsilon|^\alpha}{2}\displaystyle
  \fint\int_0^1 \frac{B f_j( y)\sin( x- y)dt d y}{\left(  A( x,  y)+\varepsilon|\varepsilon|^\alpha b_j^{1+\alpha} B\left( f_j,  x,  y\right)\right)^{\frac{\alpha+2}{2}}}$}}\\
        &{\resizebox{.95\hsize}{!}{$= -\frac{\alpha C_\alpha \gamma_j}{2}\displaystyle\fint  \frac{f_j( y)A(x,y)\sin( x- y)d y}{A(x,y)^{\frac{\alpha+2}{2}}}+C_\alpha \gamma_j\displaystyle\fint \frac{f_j( y)\sin( x- y)d y}{A(x,y)^{\frac{\alpha}{2}}}+\varepsilon|\varepsilon|^\alpha\mathcal{R}_{j21}(\varepsilon,f_j)$}}\\
		&=C_\alpha \gamma_j\left(1-\frac{\alpha}{2}\right)\int\!\!\!\!\!\!\!\!\!\; {}-{} \frac{f_j( y)\sin( x- y)d y}{\left|4\sin(\frac{ x- y}{2})\right|^\alpha}+\varepsilon|\varepsilon|^\alpha\mathcal{R}_{j21}(\varepsilon,f_j),
	\end{split}
\end{equation}
where $\mathcal{R}_{j21}$ is not singular with respect to $\varepsilon$.

Next, we proceed to differentiate $\mathcal{F}^\alpha_j$ with respect to $ x$ up to $\partial^{N-1}$ times. Our initial focus will be on the most singular term, namely, $ \partial ^{N-1}\mathcal{F}^\alpha_{j22}$
\begin{equation*}
\begin{split}
& {\resizebox{.96\hsize}{!}{$\partial ^{N-1}\mathcal{F}^\alpha_{j22}=C_\alpha\gamma _j\displaystyle\fint\frac{%
(\partial ^{N}f_j(y)-\partial ^Nf_j(x))\cos (x-y)dy}{\left( A( x, y)+\varepsilon|\varepsilon|^{\alpha}b_j^{1+\alpha} B\left( f_j, x, y\right)\right)^{\frac{\alpha}{2}}}  -C_\alpha\gamma _j\varepsilon |\varepsilon |^\alpha b_j^{1+\alpha}\displaystyle\fint\frac{\cos (x-y)}{\left( A( x, y)+\varepsilon|\varepsilon|^{\alpha}b_j^{1+\alpha} B\left( f_j, x, y\right)\right)^{\frac{\alpha+2}{2}}} $}}\\
& \ \ \ \ {\resizebox{.96\hsize}{!}{$\times \left(\varepsilon |\varepsilon|^\alpha
b_j^{1+\alpha}(f_j(x)-f_j(y))(f'_j(x)-f'_j(y))+2((1+\varepsilon|\varepsilon|^\alpha
b_j^{1+\alpha}f_j(x))f'_j(y)+(1+\varepsilon|\varepsilon|^\alpha
b_j^{1+\alpha}f_j(y))f'_j(x))\sin^2(\frac{x-y}{2})\right)$}} \\
& \ \ \ \ \times (\partial ^{N-1}f_j(y)-\partial ^{N-1}f_j(x))dy+l.o.t,
\end{split}%
\end{equation*}%
where \(l.o.t\) refers to lower order terms. We now utilize the fact that \(\| \partial^M f_j \|_{L^\infty} \leq C \| f_j \|_{X^{N+\alpha-1}} < \infty\) for \(M=0, 1, 2\), given that \(f_j(x) \in X^{N+\alpha-1}\) for \(N \geq 3\). By applying the Hölder inequality in conjunction with the mean value theorem, we obtain
\begin{equation*}
\begin{split}
\left\Vert \partial ^{N-1}\mathcal{F}^\alpha_{j22}\right\Vert _{L^{2}}& \leq C\left\Vert \int
\!\!\!\!\!\!\!\!\!\;{}-{}\frac{\partial ^Nf_j(x)-\partial ^Nf_j(y)}{%
|4\sin (\frac{x-y}{2})|^\alpha}dy\right\Vert _{L^{2}}+C\left\Vert \int
\!\!\!\!\!\!\!\!\!\;{}-{}\frac{\partial ^{N-1}f_j(x)-\partial
^{N-1}f_j(y)}{|4\sin (\frac{x-y}{2})|^\alpha}dy\right\Vert _{L^{2}} \\
& \leq C\Vert f_j\Vert _{X^{N+\alpha-1 }}+C\Vert f_j\Vert _{X^{N+\alpha-2}}<\infty .
\end{split}%
\end{equation*}%
Now, observe that \(\mathcal{F}^\alpha_{j23}\) is less singular than \(\mathcal{F}^\alpha_{j22}\). Therefore, it is straightforward to establish an upper bound for \(\left\Vert \partial^{N-1}\mathcal{F}^\alpha_{j23}\right\Vert_{L^{2}}\). Next, we turn our attention to the final term \(\mathcal{F}^\alpha_{j24}\) and differentiate it \(N-1\) times with respect to \(x\) to derive
\begin{equation*}
\begin{split}
&
\partial^{N-1}\mathcal{F}^\alpha_{j24}=\frac{-C_\alpha\gamma_j\varepsilon^{2+2\alpha}
b_j^{2+2\alpha}\,\partial^{N-1}f_j(x)}{(1+\varepsilon|\varepsilon|^{\alpha}b_j^{1+\alpha}
f_j(x))^2}\displaystyle\fint
\frac{f'_j(x)f'_j(y)\sin(x-y)dy}{\left( A( x, y)+\varepsilon|\varepsilon|^{\alpha}b_j^{1+\alpha} B\left( f_j, x, y\right)\right)^{\frac{\alpha}{2}}} \\
&
\quad +\frac{C_\alpha\gamma _j\varepsilon |\varepsilon |^\alpha b_j^{1+\alpha}}{1+\varepsilon
|\varepsilon |^\alpha b_j^{1+\alpha}f_j(x)}\displaystyle\fint\frac{(f_j^{\prime }(x)\partial ^Nf_j(y)+\partial
^Nf_j(x)f_j^{\prime }(y))\sin
(x-y)dy}{\left( A( x, y)+\varepsilon|\varepsilon|^{\alpha}b_j^{1+\alpha} B\left( f_j, x, y\right)\right)^{\frac{\alpha}{2}}} \\
& \quad -\frac{2C_\alpha\gamma _j\varepsilon^{2+2\alpha}
b_j^{2+2\alpha}}{1+\varepsilon
|\varepsilon |^\alpha b_j^{1+\alpha}f_j(x)}\displaystyle\fint\frac{\sin
(x-y)}{\left( A( x, y)+\varepsilon|\varepsilon|^{\alpha}b_j^{1+\alpha} B\left( f_j, x, y\right)\right)^{\frac{\alpha+2}{2}}} \\
&
\ \ \ \ {\resizebox{.96\hsize}{!}{$\times \left(\varepsilon |\varepsilon|^\alpha
b_j^{1+\alpha}(f_j(x)-f_j(y))(f'_j(x)-f'_j(y))+2((1+\varepsilon|\varepsilon|^\alpha
b_j^{1+\alpha}f_j(x))f'_j(y)+(1+\varepsilon|\varepsilon|^\alpha
b_j^{1+\alpha}f_j(y))f'_j(x))\sin^2(\frac{x-y}{2})\right)$}} \\
&
\qquad \qquad \times (f_j^{\prime }(x)\partial ^{N-1}f_j(y)+\partial
^{N-1}f_j(x)f_j^{\prime }(y))+l.o.t.
\end{split}%
\end{equation*}%
Using the definition of the space \(X^{N+\alpha-1}\), we can establish the following estimate
\begin{equation*}
\begin{split}
&{\resizebox{.96\hsize}{!}{$\left\Vert \partial ^{N-1}\mathcal{F}^\alpha_{j24}\right\Vert _{L^{2}} \leq C\varepsilon
|\varepsilon |^\alpha\left( \varepsilon |\varepsilon |^\alpha\Vert f_j^{\prime }\Vert
_{L^{\infty }}^{2}\Vert \partial ^{N-1}f_j\Vert _{L^{2}}+\Vert
f_j^{\prime }\Vert _{L^{\infty }}\Vert \partial ^Nf_j\Vert
_{L^{2}}+\varepsilon |\varepsilon |^\alpha\Vert f_j\Vert _{L^{\infty }}\Vert
f_j^{\prime }\Vert _{L^{\infty }}^{2}\Vert \partial ^{N-1}f_j\Vert
_{L^{2}}\right)$}} \\
&\qquad \qquad\quad \leq C\varepsilon |\varepsilon |^\alpha\Vert f_j\Vert _{X^{N+\alpha-1 }}<\infty .
\end{split}%
\end{equation*}%
Based on the preceding computations, we can conclude that the nonlinear functional \(\mathcal{F}^\alpha_{i2}\) is an element of \(\mathcal{Y}^{N-1}_0\).

Next, we will demonstrate the continuity of \(\mathcal{F}^\alpha_{j2}\), particularly focusing on the most singular term, \(\mathcal{F}^\alpha_{j22}\). For this purpose, we introduce the following notation. For any general function \(g_j(x)\), we define
\begin{equation*}
\Delta f_j=f_j(x)-f_j(y),\ \mbox{where}\ f_j=f_j(x),\ \mbox{and}\quad
\tilde{f}_j=f_j(y),\,\mbox{for}\,\, j=1,2 ,
\end{equation*}%
and
\begin{equation*}
D_{\alpha }(f_j)=b_j^{2+2\alpha }\varepsilon ^{2+2\alpha }\Delta
f_j^{2}+4(1+\varepsilon |\varepsilon |^{\alpha }b_j^{1+\alpha
}f_j)(1+\varepsilon |\varepsilon |^{\alpha }b_j^{1+\alpha }\tilde{f}%
_j)\sin ^{2}\left( \frac{x-y}{2}\right) .
\end{equation*}%
Consequently, for \(f_{j1}\) and \(f_{j2}\) belonging to \(X^{N+\alpha-1}\) with \(j=1,2\), we have
\begin{equation*}
\begin{split}
\mathcal{F}^\alpha_{j22}(\varepsilon ,f_{j1})& -\mathcal{F}^\alpha_{j22}(\varepsilon ,f_{j2})=C_\alpha\gamma _j\int
\!\!\!\!\!\!\!\!\!\;{}-{}\frac{(\Delta f_{j1}^{\prime }-\Delta
f_{j2}^{\prime })\cos (x-y)dy}{D_{1}(f_{j1})^{\frac{1}{2}}} \\
& +\left(C_\alpha \gamma _j\displaystyle\fint\frac{\Delta
f_{j2}^{\prime }\cos (x-y)dy}{D_{1}(f_{j1})^{\frac{\alpha}{2}}}-C_\alpha\gamma _j\int
\!\!\!\!\!\!\!\!\!\;{}-{}\frac{\Delta f_{j2}^{\prime }\cos (x-y)dy}{%
D_{1}(f_{j2})^{\frac{\alpha}{2}}}\right) \\
& =I_{1}+I_{2}.
\end{split}%
\end{equation*}%
It is clear that \(I_{1}\) can be bounded as follows
\[
\| I_{1} \|_{Y^{N-1}} \leq C \| f_{j1} - f_{j2} \|_{X^{N+\alpha-1}}.
\]
To estimate \(I_{2}\), we can apply the mean value theorem in the following manner
\begin{equation}
\begin{split}
& \frac{1}{D_{\alpha }(f_{j1})^{\frac{\alpha }{2}}}-\frac{1}{D_{\alpha
}(f_{j2})^{\frac{\alpha }{2}}}=\frac{\alpha }{2}\frac{D_{\alpha
}(f_{j2})-D_{\alpha }(f_{j1})}{D_{\alpha }(\delta _{x,y}f_{j1}+(1-\delta
_{x,y})f_{j2})^{1-\frac{\alpha }{2}}D_{\alpha }(f_{j1})^{\frac{\alpha }{2}%
}D_{\alpha }(f_{j2})^{\frac{\alpha }{2}}} \\
& {\resizebox{.98\hsize}{!}{$=\frac{\alpha}{2}\frac{b_j^{2+2\alpha}|%
\varepsilon|^{2+2\alpha}(\Delta f_{j2}^2-\Delta
f_{j1}^2)+4\varepsilon|\varepsilon|^\alpha b^{1+\alpha}_j
((f_{j2}-f_{j1})(1+\varepsilon|\varepsilon|^\alpha b^{1+\alpha}_j \tilde
f_{j2})+(\tilde f_{j2}-\tilde f_{j1})(1+\varepsilon|\varepsilon|^\alpha
b^{1+\alpha}_j
f_{j1}))\sin^2(\frac{x-y}{2})}{D_\alpha(\delta_{x,y}f_{j1}+(1-%
\delta_{x,y})f_{j2})^{1-\frac{\alpha}{2}}D_\alpha(f_{j1})^\frac{\alpha}{2}D_%
\alpha(f_{j2})^\frac{\alpha}{2}}$}},
\end{split}
\label{2-7}
\end{equation}%
for some \(\delta_{x,y} \in (0,1)\), it holds that \(D_{\alpha}(g) \sim \sin^2\left(\frac{x-y}{2}\right) \sim \frac{|x-y|^2}{4}\) as \(|x-y|\) approaches \(0\) and
\begin{equation*}
\begin{split}
& {\resizebox{.99\hsize}{!}{$ \partial^{N-1}I_2\sim C\displaystyle\fint
{}-{}\frac{\partial^{N-1}f_{j2}(x)-\partial^{N-1}f_{j2}(y)}{|\sin(%
\frac{x-y}{2})|^\alpha}\times \left(\frac{|\varepsilon|^{2+2\alpha}b_j^{2+2\alpha}(\Delta
f_{j2}^2-\Delta
f_{j1}^2)}{|x-y|^{1+\alpha}}+4\varepsilon|\varepsilon|^\alpha b_j^{1+\alpha}(f_{j2}-f_{j1}+\tilde
f_{j2}-\tilde f_{j1})\right)dy$}} \\
& \qquad \qquad \qquad +l.o.t.
\end{split}%
\end{equation*}%
Consequently, we can easily establish that
\[
\| I_{2} \|_{Y^{N-1}} \leq C \| f_{i1} - \mathcal{F}^1_{j2} \|_{X^{N+\alpha-1}},
\]
which leads to the conclusion that \(\mathcal{F}^\alpha_{j2}\) is continuous. We can now use  \eqref{2-6} and apply Taylor's formula to \(\mathcal{F}^\alpha_{j22}\), \(\mathcal{F}^\alpha_{j23}\) and \(\mathcal{F}^\alpha_{j24}\) to derive
\begin{equation}\label{2-8}
{\resizebox{.96\hsize}{!}{$	\mathcal{F}^\alpha_{j2}=C_\alpha\gamma _j\left(1-\frac{\alpha}{2}\right)\displaystyle\fint\frac{f_j(x-y)\sin
(y)dy}{|\sin (\frac{y}{2})|^\alpha}-C_\alpha \gamma _j\displaystyle\fint
\frac{(f_j^{\prime }(x)-f_j^{\prime }(x-y))\cos
(y)dy}{|\sin (\frac{y}{2})|^\alpha}+\varepsilon |\varepsilon |^\alpha\mathcal{R}%
_{j2}(\varepsilon ,f_j),$}}
\end{equation}%
where \(\mathcal{R}_{2}\) remains continuous as well.

Next, we turn our attention to the final component of the functional $\mathcal{F}^\alpha_{j}$, namely $\mathcal{F}^\alpha_{j3}$. Once again to manage the singularities at $\varepsilon =0$, we apply the Taylor formula (\ref{taylor}) on the element $\mathcal{F}^\alpha_{j3}$ with
\begin{equation}\label{eq:a}
    A_{k\ell j}=\left(d_j-d_\ell\cos\left(2k\pi  /m-(\delta_{2\ell}-\delta_{2j})\vartheta\pi /m\right)\right)^2 +d_\ell\sin^2\left(2k\pi  /m-(\delta_{2\ell}-\delta_{2j})\vartheta\pi /m\right),
\end{equation}
 \begin{equation}\label{eq:b}
 \begin{split}
         B_{k\ell j}&={\resizebox{.88\hsize}{!}{$2\left(d_j-d_\ell\cos\left(2k\pi  /m-(\delta_{2\ell}-\delta_{2j})\vartheta\pi /m\right)\right)\left(b_j\cos(x)-b_\ell\cos\left(y-2k\pi  /m+(\delta_{2\ell}-\delta_{2j})\vartheta\pi /m\right)\right)$}}\\
         &
        \qquad {\resizebox{.88\hsize}{!}{$+2d_\ell\sin\left(2k\pi  /m-(\delta_{2\ell}-\delta_{2j})\vartheta\pi /m\right)\left(b_j\sin(x)-b_\ell\sin\left(y-2k\pi  /m+(\delta_{2\ell}-\delta_{2j})\vartheta\pi /m\right)\right)$}}.
 \end{split}
 \end{equation}
    Since $\sin(\cdot)$ is an odd function, from \eqref{2-4b} and Taylor formula \eqref{taylor} we have
\begin{equation}\label{333}
\begin{split}
     \mathcal{F}^\alpha_{j41}  &=\sum_{\ell=1}^{2} \gamma_\ell\displaystyle\sum_{k=\delta_{\ell j}}^{m-1}\frac{ C_\alpha(1+\varepsilon|\varepsilon|^\alpha b_\ell^{1+\alpha}
f_\ell(x))}{\varepsilon b_\ell (1+\varepsilon|\varepsilon|^\alpha
b^{1+\alpha}_j f_j(x)) }\int\!\!\!\!\!\!\!\!\!\; {}-{} \frac{\sin(x-y+2k\pi  /m-(\delta_{2\ell}-\delta_{2j})\vartheta\pi /m)dy}{\left( A_j+\varepsilon B_j+O(\varepsilon^2) \right)^{\frac{\alpha}{2}}}\\
&
{\resizebox{.93\hsize}{!}{$\qquad+\sum\limits_{\ell=1}^{2}\gamma_\ell\displaystyle\sum_{k=\delta_{\ell j}}^{m-1}\frac{ C_\alpha
(1+\varepsilon|\varepsilon|^\alpha b_\ell^{1+\alpha}
f_\ell(x))}{ b_\ell (1+\varepsilon|\varepsilon|^\alpha
b^{1+\alpha}_j f_j(x)) }\int\!\!\!\!\!\!\!\!\!\; {}-{} \frac{|\varepsilon|^\alpha b_\ell^{1+\alpha}f(y)\sin(x-y+2k\pi  /m-(\delta_{2\ell}-\delta_{2j})\vartheta\pi /m)dy}{\left( A_j+\varepsilon B_j+O(\varepsilon^2) \right)^{\frac{\alpha}{2}}}$}}\\
    		&
            =\sum_{\ell=1}^{2}\gamma_\ell\displaystyle\sum_{k=\delta_{\ell j}}^{m-1}\frac{ C_\alpha
(1+\varepsilon|\varepsilon|^\alpha b_\ell^{1+\alpha}
f_\ell(x))}{\varepsilon b_\ell (1+\varepsilon|\varepsilon|^\alpha
b^{1+\alpha}_j f_j(x)) } \int\!\!\!\!\!\!\!\!\!\; {}-{} \frac{\sin(x-y+2k\pi  /m-(\delta_{2\ell}-\delta_{2j})\vartheta\pi /m)dy}{ A_j^{\frac{\alpha}{2}}}\\
&
{\resizebox{.93\hsize}{!}{$\qquad -\frac{\alpha}{2}\displaystyle\sum_{\ell=1}^{2} \gamma_\ell\displaystyle\sum_{k=\delta_{\ell j}}^{m-1}\frac{ C_\alpha(1+\varepsilon|\varepsilon|^\alpha b_\ell^{1+\alpha}
f_\ell(x))}{ b_\ell (1+\varepsilon|\varepsilon|^\alpha
b^{1+\alpha}_j f_j(x)) } \int\!\!\!\!\!\!\!\!\!\; {}-{}\int_0^1 \frac{(B_j+O(\varepsilon))\sin(x-y+2k\pi  /m-(\delta_{2\ell}-\delta_{2j})\vartheta\pi /m)dtdy}{\left( A_j+\varepsilon tB_j+O(\varepsilon^2) \right)^{\frac{\alpha}{2}+1}}$}}\\
    		&
            \ \ \ \ {\resizebox{.9\hsize}{!}{$+\sum\limits_{\ell=1}^{2}\gamma_\ell\displaystyle\sum_{k=\delta_{\ell j}}^{m-1}\frac{ C_\alpha b_\ell^{\alpha}|\varepsilon|^{\alpha}
(1+\varepsilon|\varepsilon|^\alpha b_\ell^{1+\alpha}
f_\ell(x))}{  (1+\varepsilon|\varepsilon|^\alpha
b^{1+\alpha}_j f_j(x)) }\int\!\!\!\!\!\!\!\!\!\; {}-{} \frac{f(y)\sin(x-y+2k\pi  /m-(\delta_{2\ell}-\delta_{2j})\vartheta\pi /m)dy}{\left( A_j+\varepsilon B_j+O(\varepsilon^2) \right)^{\frac{\alpha}{2}}}$}}\\
    		&
                =-\frac{\alpha}{2}\sum_{\ell=1}^{2} \gamma_\ell\displaystyle\sum_{k=\delta_{\ell j}}^{m-1}\frac{ C_\alpha}{ b_\ell  } \int\!\!\!\!\!\!\!\!\!\; {}-{} \frac{B_j\sin(x-y+2k\pi  /m-(\delta_{2\ell}-\delta_{2j})\vartheta\pi /m)dy}{A_j^{\frac{\alpha}{2}+1}}+\varepsilon\mathcal{R}_{j41}(\varepsilon,f_\ell,f_j)\\
    		&
            {\resizebox{.95\hsize}{!}{$=-\frac{\alpha C_\alpha}{2}\displaystyle\sum_{\ell=1}^{2} \gamma_\ell\displaystyle\sum_{k=\delta_{\ell j}}^{m-1} \frac{(d_j-d_\ell\cos\left(2k\pi  /m-(\delta_{2\ell}-\delta_{2j})\vartheta\pi /m\right))\sin (x)}{\left((d_j-d_\ell\cos(2k\pi  /m-(\delta_{2\ell}-\delta_{2j})\vartheta\pi /m))^2 +d_\ell\sin^2(k\pi  /m-(\delta_{2\ell}-\delta_{2j})\vartheta\pi /m)\right)^{\frac{\alpha}{2}+1}}+\varepsilon\mathcal{R}_{j41}(\varepsilon,f_\ell,f_j)$}},
\end{split}
\end{equation}
Similarly, for $\mathcal{F}^\alpha_{j31}$ we define
 $$A_{0 0 j}=d_j^2,$$
 \begin{equation*}
   B_{0 0 j}=2d_j\left(b_j\cos(x)-b_0\cos\left(y-\delta_{2j}\vartheta\pi /m\right)\right).
 \end{equation*}
 Thus, using again Taylor formula \eqref{taylor}
\begin{equation}\label{fj31}
    \begin{split}
     \mathcal{F}^\alpha_{j31}&=   \frac{\gamma_{0} C_\alpha
(1+\varepsilon|\varepsilon|^\alpha b_{0}^{1+\alpha}
f_{0}(x))}{\varepsilon b_{0} (1+\varepsilon|\varepsilon|^\alpha
b^{1+\alpha}_j f_j(x)) }\displaystyle\fint
\frac{(1+\varepsilon|\varepsilon|^\alpha b_{0}^{1+\alpha}
f_{0}(y))\sin\left(x-y+\delta_{2j}\vartheta\pi /m\right)dy}{\left(A_{00j}+\varepsilon B_{00j}\right)^{\alpha/2}}\\
&
=\frac{\gamma_{0} C_\alpha
}{\varepsilon b_{0} }\displaystyle\fint
\frac{\sin\left(x-y+\delta_{2j}\vartheta\pi /m\right)dy}{\left(A_{00j}+\varepsilon B_{00j}\right)^{\alpha/2}}+\varepsilon\mathcal{R}_{j31}(\varepsilon,f_0,f_j)\\
&
=-\frac{\alpha}{2}\frac{\gamma_{0} C_\alpha
}{ b_{0} }\displaystyle\fint
\frac{B_{00j}\sin\left(x-y+\delta_{2j}\vartheta\pi /m\right)dy}{\left(A_{00j}\right)^{\alpha/2+1}}+\varepsilon\mathcal{R}_{j31}(\varepsilon,f_0,f_j)\\
&
=-\frac{\alpha\gamma_{0} C_\alpha}{2}
\displaystyle\fint
\frac{-2\cos\left(y-\delta_{2j}\vartheta\pi /m\right)\sin\left(x-y+\delta_{2j}\vartheta\pi /m\right)dy}{d_j^{1+\alpha}}+\varepsilon\mathcal{R}_{31}(\varepsilon,f_0,f_j)\\
&
=-\frac{\alpha\gamma_{0} C_\alpha}{2d_j^{1+\alpha}}\left(-2\sin\left(x+\delta_{2j}\vartheta\pi /m\right)
\displaystyle\fint
\cos\left(y-\delta_{2j}\vartheta\pi /m\right)\cos(y)dy\right.\\
&
\left.\qquad+2\cos\left(x+\delta_{2j}\vartheta\pi /m\right)\displaystyle\fint\cos\left(y-\delta_{2j}\vartheta\pi /m\right)\sin(y)\right)+\varepsilon\mathcal{R}_{j31}(\varepsilon,f_0,f_j)\\
&
 {\resizebox{.93\hsize}{!}{$=-\frac{\alpha\gamma_{0} C_\alpha}{2d_j^{1+\alpha}}\left(-\sin\left(x+\delta_{2j}\vartheta\pi /m\right)\cos\left(\delta_{2j}\vartheta\pi /m\right)
\pi+\cos\left(x+\delta_{2j}\vartheta\pi /m\right)\sin\left(\delta_{2j}\vartheta/m\right)\right)+\varepsilon\mathcal{R}_{j31}(\varepsilon,f_0,f_j)$}}\\
&
=-\frac{\alpha\gamma_{0} C_\alpha}{2d_j^{1+\alpha}}\sin(x)+\varepsilon\mathcal{R}_{j31}(\varepsilon,f_0,f_j) .
    \end{split}
\end{equation}
Here, $\mathcal{R}_{j31}$ refers to a bounded function that is also continuous.
Notice that for $z_j\in  \partial \mathcal{O}_{j}^\varepsilon$ we have that
\begin{equation*}
|(z_j(x)+(d_j,0))-\nu_{k 0 j}(z_0(y)+(d_0,0))|^\alpha ,
\end{equation*}%
and
\begin{equation*}
|(z_j(x)+(d_j,0))-\nu_{k \ell j}(z_\ell(y)+(d_\ell,0))|^\alpha ,
\end{equation*}%
have positive lower bounds,
as a result, we conclude that $\mathcal{F}^\alpha_{j3}$ and $\mathcal{F}^\alpha_{j4}$ is less singular than $\mathcal{F}^\alpha_{j2}$. Moreover, it follows that $\mathcal{F}^\alpha_{j3}(\varepsilon, f, \lambda)$ and $\mathcal{F}^\alpha_{j4}(\varepsilon, f, \lambda)$ remains continuous.
Thus, we conclude the proof of the continuity of the functional $\mathcal{F}^\alpha_{j}$.

Now, we deal with the remaining term, namely $\mathcal{F}^\alpha_0$. By using \eqref{f01}, we can rewrite $\mathcal{F}^\alpha_{0 1}:\left(-\varepsilon_0, \varepsilon_0\right) \times \mathcal{B}_X\times \Lambda \rightarrow \mathcal{Y}^{N-1}$
as
\begin{equation}\label{2-1}
\begin{split}
  \mathcal{F}^\alpha_{0 1}=\varepsilon|\varepsilon|^\alpha\mathcal{R}_{01}(\varepsilon,f_0) ,
\end{split}
\end{equation}
where \(\mathcal{R}_{01}(\varepsilon, f_0)\) is continuous. Consequently, we can infer that \(\mathcal{F}^\alpha_{0 1}\) is also continuous.  Similarly, following the same lines as \eqref{2-8}  we can deduce that
\begin{equation}\label{f02}
    \mathcal{F}^\alpha_{02}=C_\alpha\gamma _{0}\left(1-\frac{\alpha}{2}\right)\displaystyle\fint   \frac{f_0( x- y)\sin( y)d y}{\left(4\sin^2(\frac{ y}{2})\right)^{\frac{\alpha}{2}}}-C_\alpha \gamma_0\displaystyle\fint \frac{(f'_0( x)-f'_0( x- y))\cos( y)d y}{\left(4\sin^2(\frac{ y}{2})\right)^{\frac{\alpha}{2}}}+\varepsilon|\varepsilon|^\alpha\mathcal{R}_{02}(\varepsilon ,f_0) ,
\end{equation}

Now, we compute $\mathcal{F}^\alpha_{031}$ using Taylor formula \eqref{taylor} and the following
\begin{equation}\label{eq:a2}
    A_{k\ell 0}=d_\ell^2 ,
\end{equation}
 \begin{equation}\label{eq:b2}
 \begin{split}
         B_{k\ell 0}&={\resizebox{.88\hsize}{!}{$-2d_\ell\cos\left(2k\pi  /m-\delta_{2\ell}\vartheta\pi /m\right)\left(b_0\cos(x)-b_\ell\cos\left(y-2k\pi  /m+\delta_{2\ell}\vartheta\pi /m\right)\right)$}}\\
         &
        \qquad {\resizebox{.88\hsize}{!}{$+2d_\ell\sin\left(2k\pi  /m-\delta_{2\ell}\vartheta\pi /m\right)\left(b_0\sin(x)-b_\ell\sin\left(y-2k\pi  /m+\delta_{2\ell}\vartheta\pi /m\right)\right)$}} ,
 \end{split}
 \end{equation}
we have
\begin{equation}\label{f031}
    \begin{split}
        \mathcal{F}^\alpha_{031}&{\resizebox{.95\hsize}{!}{$=\displaystyle\sum_{\ell=1}^2 \gamma_\ell \displaystyle\sum_{k=0}^{m-1}\frac{ C_\alpha
(1+\varepsilon|\varepsilon|^\alpha b_\ell^{1+\alpha}
f_\ell(x))}{\varepsilon b_\ell (1+\varepsilon|\varepsilon|^\alpha
b^{1+\alpha}_0 f_0(x)) }\displaystyle\fint
\frac{(1+\varepsilon|\varepsilon|^\alpha b_\ell^{1+\alpha}
f_\ell(y))\sin\left(x-y+2k\pi  /m-\delta_{2\ell}\vartheta\pi /m\right)dy}{(A_{k\ell 0}+\varepsilon B_{k\ell 0})^{\alpha/2}}$}}\\
&
=\displaystyle\sum_{\ell=1}^2 \gamma_\ell \displaystyle\sum_{k=0}^{m-1}\frac{ C_\alpha
}{\varepsilon b_\ell  }\displaystyle\fint
\frac{\sin\left(x-y+2k\pi  /m-\delta_{2\ell}\vartheta\pi /m\right)dy}{(A_{k\ell 0}+\varepsilon B_{k\ell 0})^{\alpha/2}}+\varepsilon\mathcal{R}_{031}(\varepsilon,f_\ell,f_0)\\
&
=\displaystyle\sum_{\ell=1}^2 \gamma_\ell \displaystyle\sum_{k=0}^{m-1}\frac{ C_\alpha
}{\varepsilon b_\ell  }\displaystyle\fint
\frac{\sin\left(x-y+2k\pi  /m-\delta_{2\ell}\vartheta\pi /m\right)dy}{A_{k\ell 0}^{\alpha/2}}\\
&
{\resizebox{.95\hsize}{!}{$\quad -\displaystyle\sum_{\ell=1}^2 \gamma_\ell \displaystyle\sum_{k=0}^{m-1}\frac{ \alpha C_\alpha
}{2 b_\ell  }\displaystyle\fint\int_0^1
\frac{B_{k\ell 0}\sin\left(x-y+2k\pi  /m-\delta_{2\ell}\vartheta\pi /m\right)dt\,dy}{(A_{k\ell 0}+\varepsilon t B_{k\ell 0})^{\frac{\alpha}{2}+1}}+\varepsilon\mathcal{R}_{031}(\varepsilon,f_\ell,f_0)$}}\\
&
= -\frac{ \alpha C_\alpha
}{2   }\displaystyle\sum_{\ell=1}^2 \gamma_\ell \displaystyle\sum_{k=0}^{m-1}\frac{ 1
}{ b_\ell  }\displaystyle\fint
\frac{B_{k\ell 0}\sin\left(x-y+2k\pi  /m-\delta_{2\ell}\vartheta\pi /m\right)dy}{A_{k\ell 0}^{\frac{\alpha}{2}+1}}+\varepsilon\mathcal{R}_{031}(\varepsilon,f_\ell,f_0)\\
&
=\frac{\alpha C_\alpha}{2}\displaystyle\sum_{\ell=1}^{2} \gamma_\ell\displaystyle\sum_{k=0}^{m-1} \frac{\cos\left(2k\pi  /m-(\delta_{2\ell}-\delta_{2j})\vartheta\pi /m\right)\sin (x)}{d_\ell^{1+\alpha}}+\varepsilon\mathcal{R}_{031}(\varepsilon,f_\ell,f_0)\\
        &
        =\varepsilon\mathcal{R}_{031}(\varepsilon,f_\ell,f_0) ,
    \end{split}
\end{equation}
where, $\mathcal{R}_{j31}$ is a bounded function that is also continuous. Following the same lines as the continuity of $\mathcal{F}^\alpha_{j2}$ and $\mathcal{F}^\alpha_{j3}$, we can infer  that $\mathcal{F}^\alpha_{03}(\varepsilon, f, \lambda)$ is also continuous.
Thus, we conclude the proof of the continuity of the functional $\mathcal{F}^\alpha$.
\end{proof}
By combining \eqref{2-1b}, \eqref{2-8}, \eqref{333},\eqref{fj31}, \eqref{2-1} and \eqref{f031}, we obtain the following expressions
\begin{equation}\label{3-7}
    \begin{cases}
&{\resizebox{.93\hsize}{!}{$\mathcal{F}^\alpha_{0}(\lambda)=C_\alpha\gamma _{0}\left(1-\frac{\alpha}{2}\right)\displaystyle\fint   \frac{f_0( x- y)\sin( y)d y}{\left(4\sin^2(\frac{ y}{2})\right)^{\frac{\alpha}{2}}}-C_\alpha \gamma_0\displaystyle\fint \frac{(f'_0( x)-f'_0( x- y))\cos( y)d y}{\left(4\sin^2(\frac{ y}{2})\right)^{\frac{\alpha}{2}}}+\varepsilon\mathcal{R}_{0}(\varepsilon ,f)$}}\\
        &
        {\mathcal{F}}_1^{\alpha}(\lambda)=\Omega d_1\sin (x)-\frac{\widehat{C}_\alpha}{2d_1^{\alpha+1}}\Big[\gamma_0+\frac{\gamma_1}{2}S_\alpha +\gamma_2T_\alpha^+( d ,\vartheta)\Big]\sin(x)\\
        &
    {\resizebox{.93\hsize}{!}{$   \qquad \qquad + C_\alpha\gamma _1\left(1-\frac{\alpha}{2}\right)\displaystyle\fint\frac{f_1(x-y)\sin
(y)dy}{\left(4\sin^2(\frac{ y}{2})\right)^{\frac{\alpha}{2}}}-C_\alpha \gamma _1\displaystyle\fint
\frac{(f_1^{\prime }(x)-f_1^{\prime }(x-y))\cos
(y)dy}{\left(4\sin^2(\frac{ y}{2})\right)^{\frac{\alpha}{2}}}+\varepsilon\mathcal{R}_{1}(\varepsilon ,f)$}}
\\
&{\mathcal{F}}_2^{\alpha}(\lambda)=\Omega d_2\sin (x) -\frac{\widehat{C}_\alpha}{2d_2^{\alpha+1}}\Big[\gamma_0+\gamma_1 T_\alpha^-( d ,\vartheta)+\frac{\gamma_2}{2}S_\alpha \Big]\sin(x)\\
&
{\resizebox{.93\hsize}{!}{$\qquad \qquad + C_\alpha\gamma _2\left(1-\frac{\alpha}{2}\right)\displaystyle\fint\frac{f_2(x-y)\sin
(y)dy}{\left(4\sin^2(\frac{ y}{2})\right)^{\frac{\alpha}{2}}}-C_\alpha \gamma _2\displaystyle\fint
\frac{(f_2^{\prime }(x)-f_2^{\prime }(x-y))\cos
(y)dy}{\left(4\sin^2(\frac{ y}{2})\right)^{\frac{\alpha}{2}}}+\varepsilon\mathcal{R}_{2}(\varepsilon ,f).$}}
    \end{cases}
\end{equation}%
where $\mathcal{R}_{j}$, for $j=0,1,2$, are bounded and smooth
and $T_\alpha^-( d ,\vartheta)$ and $S_\alpha$ are defined by
\begin{equation}\label{S}
  \frac{S_\alpha}{2}:=  \frac12\sum_{k=1}^{m-1}{ \Big(2\sin\big({\frac{k\pi  }{m}}\big)  \Big)^{-\alpha}},
\end{equation}
\begin{equation}\label{t}
   {T_\alpha^\pm( d ,\vartheta)}:= \sum_{k=0}^{m-1}\frac{1 -  d ^{\pm 1} \cos\big({\frac{(2k\pm\vartheta)\pi }{m}}\big)}{\big(1+( d ^{\pm 1})^2 - 2 d ^{\pm 1} \cos\big({\frac{(2k\pm\vartheta)\pi }{m}}\big) \big)^{\frac\alpha2+1}}
\end{equation}
The subsequent proposition focuses on the $C^1$ smoothness of the
 functional $\mathcal{F}^\alpha.$
\begin{proposition}\label{lem2-3}
There exists $\varepsilon_0>0$ and a small neighborhood $\Lambda$ of $\lambda^*$  such that  the Gateaux derivatives \( \partial_{f_0} \mathcal{F}^\alpha_0(\varepsilon, f, \lambda) h_0 : (-\varepsilon_0, \varepsilon_0) \times \mathcal{B}_X \times \Lambda \rightarrow \mathcal{Y}^{N-1} \), \( \partial_{f_j} \mathcal{F}^\alpha_j(\varepsilon, f, \lambda) h_j : (-\varepsilon_0, \varepsilon_0) \times \mathcal{B}_X \times \Lambda \rightarrow \mathcal{Y}^{N-1}  \) and  \( \partial_{f_\ell} \mathcal{F}^\alpha_j(\varepsilon, f, \lambda) h_\ell : (-\varepsilon_0, \varepsilon_0) \times \mathcal{B}_X \times \Lambda \rightarrow \mathcal{Y}^{N-1}  \)  exist and are continuous.
\end{proposition}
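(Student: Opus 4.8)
The plan is to differentiate the decomposition already produced in the proof of Proposition~\ref{p3-1}. Recall that there each block $\mathcal{F}^\alpha_{jk}$ was written as an explicit leading term — either independent of $f$ or \emph{linear} in a single component $f_i$ — plus $\varepsilon|\varepsilon|^\alpha$ times a remainder $\mathcal{R}_{jk}$ shown to be continuous on $(-\varepsilon_0,\varepsilon_0)\times\mathcal{B}_X\times\Lambda$; see in particular \eqref{2-6}, \eqref{2-8}, \eqref{333}, \eqref{fj31}, \eqref{f02}, \eqref{f031} and the summary \eqref{3-7}. I would compute the Gâteaux derivative block by block. For the leading terms the derivative is immediate: it is $f$-independent, bounded and linear in the perturbation direction, hence continuous for free. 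So the whole content is to differentiate the $\varepsilon|\varepsilon|^\alpha$-weighted remainders, to check the output still lies in $\mathcal{Y}^{N-1}$, and to verify joint continuity in $(\varepsilon,f,\lambda)$ — including at $\varepsilon=0$, where the weight kills every remainder and the derivative reduces to the explicit leading operator, so that the extension to $\varepsilon\le 0$ stays $C^1$.

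\textbf{Same-index derivatives $\partial_{f_0}\mathcal{F}^\alpha_0 h_0$ and $\partial_{f_j}\mathcal{F}^\alpha_j h_j$.} The only genuinely singular block is the self-interaction $\mathcal{F}^\alpha_{02}$, resp. $\mathcal{F}^\alpha_{j2}$. I would differentiate \eqref{2-8} in the direction $h_j$, obtaining $\partial_{f_j}\mathcal{F}^\alpha_{j2}(\varepsilon,f_j)h_j = C_\alpha\gamma_j(1-\tfrac{\alpha}{2})\fint|\sin(\tfrac{y}{2})|^{-\alpha}h_j(x-y)\sin(y)\,dy - C_\alpha\gamma_j\fint|\sin(\tfrac{y}{2})|^{-\alpha}(h_j'(x)-h_j'(x-y))\cos(y)\,dy + \varepsilon|\varepsilon|^\alpha\,\partial_{f_j}\mathcal{R}_{j2}(\varepsilon,f_j)h_j$; existence is justified by differentiating under the integral sign in the Taylor-expanded representation \eqref{2-6}, whose remaining $\varepsilon$-dependent denominators stay bounded away from $0$ once the principal singularity has cancelled. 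Applying $\partial_x^{N-1}$, the worst contribution is $\int|\sin(\tfrac{y}{2})|^{-\alpha}(\partial^N h_j(x-y)-\partial^N h_j(x))\cos(y)\,dy$, which lies in $L^2$ precisely because $h_j\in X^{N+\alpha-1}$ (the sharper logarithmic integrability built into $X^{N+\log}_1$ is exactly what is needed when $\alpha=1$); all other terms put at most one top-order derivative inside the singular kernel and the rest are absorbed by $\|\partial^M f_j\|_{L^\infty}+\|\partial^M h_j\|_{L^\infty}\le C$, $M\le2$. Hence the derivative maps into $\mathcal{Y}^{N-1}$. Continuity is obvious for the first two, parameter-free, bounded linear terms, and for the remainder it follows from the same mean value theorem bookkeeping as in \eqref{2-7}: the difference $D_\alpha(f^{(1)}_j)^{-\alpha/2}-D_\alpha(f^{(2)}_j)^{-\alpha/2}$ is controlled by $\|f^{(1)}_j-f^{(2)}_j\|_{X^{N+\alpha-1}}$, now carried along with the innocuous extra factors $h_j$ and $\varepsilon|\varepsilon|^\alpha$. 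The blocks $\mathcal{F}^\alpha_{j1},\mathcal{F}^\alpha_{j3},\mathcal{F}^\alpha_{j4}$ depend on $f_j$ only through the smooth prefactor $(1+\varepsilon|\varepsilon|^\alpha b_j^{1+\alpha}f_j(x))^{-1}$ and the non-singular kernels treated next, so their $f_j$-derivatives are handled trivially.

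\textbf{Cross-index derivatives $\partial_{f_\ell}\mathcal{F}^\alpha_j h_\ell$ ($\ell\neq j$) and $\partial_{f_\ell}\mathcal{F}^\alpha_0 h_\ell$.} For these I would use that $f_\ell$ enters only through the interaction blocks $\mathcal{F}^\alpha_{j3},\mathcal{F}^\alpha_{j4}$, resp. $\mathcal{F}^\alpha_{03}$, whose kernels $|(z_j(x)+(d_j,0))-\nu_{k\ell j}(z_\ell(y)+(d_\ell,0))|^{-\alpha}$ admit a lower bound uniform in $\varepsilon\in(-\varepsilon_0,\varepsilon_0)$ and in $f,h$ ranging over bounded subsets of $\mathcal{B}_X$, because the patches $\mathcal{D}^\varepsilon_{ij}$ stay pairwise disjoint and, as $\varepsilon\to 0$, converge to the distinct point-vortex positions \eqref{z}. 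Consequently the integrands are smooth in $f_\ell$, differentiation under the integral sign is legitimate, and after $\partial_x^{N-1}$ one bounds the derivative in $H^{N-1}$ by $C\|h_\ell\|_{X^{N+\alpha-1}}$ with constants depending only on this lower bound; joint continuity in $(\varepsilon,f,\lambda)$ then follows from dominated convergence together with the explicit rational (hence locally Lipschitz) dependence of the integrand on $\varepsilon$, $f$, $\Omega$ and $\gamma_2$. At $\varepsilon=0$ the kernels freeze to the constants $A_{k\ell j}^{\alpha/2}$ of \eqref{eq:a}, \eqref{eq:a2}, so again $C^1$ regularity is preserved across $\varepsilon=0$.

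\textbf{Main obstacle.} The delicate point is the mapping property in the same-index case: one must verify that $h\mapsto\fint|\sin(\tfrac{y}{2})|^{-\alpha}(h'(x)-h'(x-y))\cos(y)\,dy$ is bounded from $X^{N+\alpha-1}$ (from $X^{N+\log}_1$ when $\alpha=1$) into $H^{N-1}$ — which is exactly the reason those function spaces were introduced — and to keep careful track, in the $\varepsilon$-remainders coming out of \eqref{2-6} after $N-1$ differentiations, that never more than one top-order derivative falls on the singular factor while all the others are controlled in $L^\infty$. The endpoint $\alpha=1$ is the most demanding, since it forces the Hardy/logarithmic estimate encoded in $X^{N+\log}_1$ rather than a plain fractional one. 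Once this is secured, joint continuity of all the partial Gâteaux derivatives follows by repeating the mean value theorem estimates of Proposition~\ref{p3-1}.
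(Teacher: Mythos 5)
Your proposal is correct and follows essentially the same route as the paper: compute the Gâteaux derivatives block by block, isolate the self-interaction term $\mathcal{F}^\alpha_{j2}$ (in particular $\mathcal{F}^\alpha_{j22}$) as the only genuinely singular piece, control its top-order contribution using the defining integral condition of $X^{N+\alpha-1}$ (resp.\ $X^{N+\log}_1$), and dispose of the cross-interaction blocks via the uniform positive lower bound on the kernels $|(z_j(x)+(d_j,0))-\nu_{k\ell j}(z_\ell(y)+(d_\ell,0))|$, with continuity obtained from the mean value theorem estimate of \eqref{2-7}. The only part you compress relative to the paper is the explicit verification that the difference quotient converges to the candidate derivative (the paper's estimate showing the residual bracket is $O(t^2)|\sin(\tfrac{x-y}{2})|^{-\alpha/2}\varphi$ with $\varphi$ bounded, whence a bound $Ct\|f_j\|_{X^{N+\alpha-1}}$), but the mean value theorem bookkeeping you invoke is exactly the ingredient the paper uses for that step.
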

\begin{proof}
We claim $\partial_{ f_j} \mathcal{F}^\alpha_{j2}(\varepsilon, f_j)h_j=\partial_{ f_j} \mathcal{F}^\alpha_{j11}+\partial_{ f_j} \mathcal{F}^\alpha_{j22}+\partial_{ f_j} \mathcal{F}^\alpha_{j23}+\partial_{ f_j}\mathcal{F}^\alpha_{j24}$ is continuous, where
	\begin{equation}\label{2-110}
		\begin{split}		
    			&{\resizebox{.99\hsize}{!}{$\partial_{ f_0} \mathcal{F}^\alpha_{021}=C_\alpha\gamma _0\displaystyle\fint \frac{h_0( y)\sin( x- y)d y}{\left( |\varepsilon|^{2+2\alpha}b_0^{2+2\alpha}\left(f_0( x)-f_0( y)\right)^2+4(1+\varepsilon|\varepsilon|^\alpha b_0^{1+\alpha} f_0( x))(1+\varepsilon|\varepsilon|^\alpha b_0^{1+\alpha} f_0( y))\sin^2\left(\frac{ x- y}{2}\right)\right)^{\frac{\alpha}{2}}}$}}\\
			& \ \ \ \ {\resizebox{.9\hsize}{!}{$-\frac{\alpha C_\alpha \gamma_0}{2}\displaystyle \fint \frac{(1+\varepsilon|\varepsilon|^\alpha b_0^{1+\alpha}  f_0( y))\sin( x- y)}{\left( |\varepsilon|^{2+2\alpha}b_0^{2+2\alpha}\left(f_0( x)-f_0( y)\right)^2+4(1+\varepsilon|\varepsilon|^\alpha b_0^{1+\alpha} f_0( x))(1+\varepsilon|\varepsilon|^\alpha b_0^{1+\alpha}  f_0( y))\sin^2\left(\frac{ x- y}{2}\right)\right)^{\frac{\alpha+2}{2}}}$}}\\
			& \ \ \ \ \times\bigg(2\varepsilon|\varepsilon|^\alpha b_0^{1+\alpha}(f_0( x)-f_0( y))(h_0( x)-h_0( y))\\
			& \ \ \ \ \ \ \ \ +4(h_0( x)(1+\varepsilon|\varepsilon|^\alpha b_0^{1+\alpha}  f_0( y))+h_0( y)(1+\varepsilon|\varepsilon|^\alpha b_0^{1+\alpha} f_0( x)))\sin^2\left(\frac{ x- y}{2}\right)\bigg)d y,
		\end{split}
	\end{equation}
    \begin{equation}\label{2-120}
    	\begin{split}
    		&\partial_{ f_0} \mathcal{F}^\alpha_{022}=C_\alpha\gamma_0{\resizebox{.85\hsize}{!}{$\displaystyle\fint \frac{(h'_0( y)-h'_0( x))\cos( x- y)d y}{\left( |\varepsilon|^{2+2\alpha}\left(f_0( x)-f_0( y)\right)^2+4(1+\varepsilon|\varepsilon|^\alpha b_0^{1+\alpha} f_0( x))(1+\varepsilon|\varepsilon|^\alpha b_0^{1+\alpha} f_0( y))\sin^2\left(\frac{ x- y}{2}\right)\right)^{\frac{\alpha}{2}}}$}}\\
    		&  \ \ \ {\resizebox{.99\hsize}{!}{$-\frac{\alpha C_\alpha\gamma_0\varepsilon|\varepsilon|^\alpha b_0^{1+\alpha}}{2}\displaystyle \fint \frac{(g'_0( y)-g'_0( x))\cos( x- y)}{\left( |\varepsilon|^{2+2\alpha}\left(f_0( x)-f_0( y)\right)^2+4(1+\varepsilon|\varepsilon|^\alpha b_0^{1+\alpha} f_0( x))(1+\varepsilon|\varepsilon|^\alpha b_0^{1+\alpha} f_0( y))\sin^2\left(\frac{ x- y}{2}\right)\right)^{\frac{\alpha+2}{2}}}$}}\\
    			&  \ \ \ \times\bigg(\varepsilon|\varepsilon|^\alpha b_0^{1+\alpha}(2(f_0( x)-f_0( y))(h_0( x)-h_0( y))\\
			& \ \ \ \ \ \ \ \ +4(h_0( x)(1+\varepsilon|\varepsilon|^\alpha b_0^{1+\alpha}  f_0( y))+h_0( y)(1+\varepsilon|\varepsilon|^\alpha b_0^{1+\alpha} f_0( x)))\sin^2\left(\frac{ x- y}{2}\right)\bigg)d y,
    	\end{split}
    \end{equation}
    \begin{equation}\label{2-130}
    	\begin{split}
    		&\partial_{ f_0} \mathcal{F}^\alpha_{023}={\resizebox{.9\hsize}{!}{$\frac{C_\alpha \gamma_0 \varepsilon|\varepsilon|^\alpha b_0^{1+\alpha} h'_0( x)}{1+\varepsilon|\varepsilon|^\alpha b_0^{1+\alpha} f_0( x)}\displaystyle\fint \frac{(f_0( x)-f_0( y))\cos( x- y)d y}{\left( |\varepsilon|^{2+2\alpha}b_0^{2+2\alpha}\left(f_0( x)-f_0( y)\right)^2+4(1+\varepsilon|\varepsilon|^\alpha b_0^{1+\alpha} f_0( x))(1+\varepsilon|\varepsilon|^\alpha b_0^{1+\alpha} f_0( y))\sin^2\left(\frac{ x- y}{2}\right)\right)^{\frac{\alpha}{2}}}$}}\\
    		&
            \ \ \ \ +{\resizebox{.93\hsize}{!}{$\frac{C_\alpha  \gamma_0\varepsilon|\varepsilon|^\alpha b_0^{1+\alpha}
            f'_0( x)}{1+\varepsilon|\varepsilon|^\alpha b_0^{1+\alpha} f_0( x)}\displaystyle\fint\frac{(h_0( x)-h_0( y))\cos( x- y)d y}{\left( |\varepsilon|^{2+2\alpha}b_0^{2+2\alpha}\left(f_0( x)-f_0( y)\right)^2+4(1+\varepsilon|\varepsilon|^\alpha b_0^{1+\alpha} f_0( x))(1+\varepsilon|\varepsilon|^\alpha b_0^{1+\alpha} f_0( y))\sin^2\left(\frac{ x- y}{2}\right)\right)^{\frac{\alpha}{2}}}$}}\\
            &
            \ \ \ \ -{\resizebox{.9\hsize}{!}{$\frac{C_\alpha  \gamma_0|\varepsilon|^{2+2\alpha}b_0^{2+2\alpha} f'_0( x)h_0( x)}{(1+\varepsilon|\varepsilon|^\alpha b_0^{1+\alpha}
            f_0( x))^2}\displaystyle\fint \frac{(f_0( x)-f_0( y))\cos( x- y)d y}{\left( |\varepsilon|^{2+2\alpha}b_0^{2+2\alpha}\left(f_0( x)-f_0( y)\right)^2+4(1+\varepsilon|\varepsilon|^\alpha b_0^{1+\alpha} f_0( x))(1+\varepsilon|\varepsilon|^\alpha b_0^{1+\alpha} f_0( y))\sin^2\left(\frac{ x- y}{2}\right)\right)^{\frac{\alpha}{2}}}$}}\\
    		&
            \ \ \ \ -{\resizebox{.9\hsize}{!}{$\frac{\alpha C_\alpha \gamma_0 |\varepsilon|^{2+2\alpha}b_0^{2+2\alpha} f'_0( x)}{2(1+\varepsilon|\varepsilon|^\alpha b_0^{1+\alpha} f_0( x))}\displaystyle\fint \frac{(f_0( x)-f_0( y))\cos( x- y)}{\left( |\varepsilon|^{2+2\alpha}b_0^{2+2\alpha}\left(f_0( x)-f_0( y)\right)^2+4(1+\varepsilon|\varepsilon|^\alpha b_0^{1+\alpha} f_0( x))(1+\varepsilon|\varepsilon|^\alpha b_0^{1+\alpha} f_0( y))\sin^2\left(\frac{ x- y}{2}\right)\right)^{\frac{\alpha+2}{2}}}$}}\\
    		&
            \ \ \ \ \times\bigg(\varepsilon|\varepsilon|^\alpha b_0^{1+\alpha}(2(f_0( x)-f_0( y))(h_0( x)-h_0( y))\\
			& \ \ \ \ \ \ \ \ +4(h_0( x)(1+\varepsilon|\varepsilon|^\alpha b_0^{1+\alpha}  f_0( y))+h_0( y)(1+\varepsilon|\varepsilon|^\alpha b_0^{1+\alpha} f_0( x)))\sin^2\left(\frac{ x- y}{2}\right)\bigg)d y
    	\end{split}
    \end{equation}
    and
    \begin{equation}\label{2-140}
    	\begin{split}
    		&\partial_{ f_0} \mathcal{F}^\alpha_{024}={\resizebox{.9\hsize}{!}{$\frac{C_\alpha \gamma_0 |\varepsilon|^{1+\alpha}b_0^{1+\alpha}}{1+\varepsilon|\varepsilon|^\alpha b_0^{1+\alpha} f_0( x)}\displaystyle\fint \frac{(h'_0( y)f'_0( x)+h'_0( x)f'_0( y))\sin( x- y)d y}{\left( |\varepsilon|^{2+2\alpha}b_0^{2+2\alpha} \left(f_0( x)-f_0( y)\right)^2+4(1+\varepsilon|\varepsilon|^\alpha b_0^{1+\alpha} f_0( x))(1+\varepsilon|\varepsilon|^\alpha b_0^{1+\alpha} f_0( y))\sin^2\left(\frac{ x- y}{2}\right)\right)^{\frac{\alpha}{2}}}$}}\\
    		& \ \ \ \ -{\resizebox{.93\hsize}{!}{$\frac{\alpha C_\alpha \gamma_0|\varepsilon|^{2+2\alpha}b_0^{2+2\alpha}}{2(1+\varepsilon|\varepsilon|^\alpha b_0^{1+\alpha} f_0( x))}\displaystyle\fint  \frac{f'_0( x)f'_0( y)\sin( x- y)}{\left( |\varepsilon|^{2+2\alpha}b_0^{2+2\alpha} \left(f_0( x)-f_0( y)\right)^2+4(1+\varepsilon|\varepsilon|^\alpha b_0^{1+\alpha} f_0( x))(1+\varepsilon|\varepsilon|^\alpha b_0^{1+\alpha} f_0( y))\sin^2\left(\frac{ x- y}{2}\right)\right)^{\frac{\alpha+2}{2}}}$}}\\
    		& \ \ \ \ \times\bigg(\varepsilon|\varepsilon|^\alpha b_0^{1+\alpha}(2(f_0( x)-f_0( y))(h_0( x)-h_0( y))\\
			& \ \ \ \ \ \ \ \ +4(h_0( x)(1+\varepsilon|\varepsilon|^\alpha b_0^{1+\alpha}  f_0( y))+h_0( y)(1+\varepsilon|\varepsilon|^\alpha b_0^{1+\alpha} f_0( x)))\sin^2\left(\frac{ x- y}{2}\right)\bigg)d y.\\
			&\ \ \ \ \ \ \ \ \ {\resizebox{.93\hsize}{!}{$-\frac{C_{\alpha}\gamma_0|\varepsilon|^{1+\alpha}b_0^{1+\alpha}f'_{j}( x)}{(1+\varepsilon|\varepsilon|^{\alpha}b_0^{1+\alpha}f_{j}( x))^{2}}\displaystyle\fint  \frac{f'_{j}( y)h_0( x)\sin{( x- y)}}{\left( |\varepsilon|^{2+2\alpha}b_0^{2+2\alpha} \left(f_0( x)-f_0( y)\right)^2+4(1+\varepsilon|\varepsilon|^\alpha b_0^{1+\alpha} f_0( x))(1+\varepsilon|\varepsilon|^\alpha b_0^{1+\alpha} f_0( y))\sin^2\left(\frac{ x- y}{2}\right)\right)^{\frac{\alpha}{2}}}\, d y.$}}
    	\end{split}
    \end{equation}
    \begin{equation}\label{partial_{j}20}
  \partial_{ f_0} \mathcal{F}^\alpha_{01}(\varepsilon ,f_0,\lambda) = O(\varepsilon) .
\end{equation}
\begin{equation}\label{partial_{j}30}
  \partial_{ f_0}  \mathcal{F}^\alpha_{03}(\varepsilon ,f_0\lambda) = O(\varepsilon) .
\end{equation}
and
\begin{equation}\label{partial_j20}
  \partial_{ f_\ell} \mathcal{F}^\alpha_{0}(\varepsilon ,f,\lambda) = O(\varepsilon) .
\end{equation}
 We also can verify that $\partial_{ f_j} \mathcal{F}^\alpha_{j2}(\varepsilon, f_j)h_j=\partial_{ f_j} \mathcal{F}^\alpha_{j11}+\partial_{ f_j} \mathcal{F}^\alpha_{j22}+\partial_{ f_j} \mathcal{F}^\alpha_{j23}+\partial_{ f_j}\mathcal{F}^\alpha_{j24}$ is continuous. To do this we shall compute the Gateaux derivatives as follows
	\begin{equation}\label{2-11}
		\begin{split}		
    			&{\resizebox{.99\hsize}{!}{$\partial_{ f_j} \mathcal{F}^\alpha_{j21}=C_\alpha\gamma _j\displaystyle\fint \frac{h_j( y)\sin( x- y)d y}{\left( |\varepsilon|^{2+2\alpha}b_j^{2+2\alpha}\left(f_j( x)-f_j( y)\right)^2+4(1+\varepsilon|\varepsilon|^\alpha b_j^{1+\alpha} f_j( x))(1+\varepsilon|\varepsilon|^\alpha b_j^{1+\alpha} f_j( y))\sin^2\left(\frac{ x- y}{2}\right)\right)^{\frac{\alpha}{2}}}$}}\\
			& \ \ \ \ {\resizebox{.9\hsize}{!}{$-\frac{\alpha C_\alpha \gamma_j}{2}\displaystyle \fint \frac{(1+\varepsilon|\varepsilon|^\alpha b_j^{1+\alpha}  f_j( y))\sin( x- y)}{\left( |\varepsilon|^{2+2\alpha}b_j^{2+2\alpha}\left(f_j( x)-f_j( y)\right)^2+4(1+\varepsilon|\varepsilon|^\alpha b_j^{1+\alpha} f_j( x))(1+\varepsilon|\varepsilon|^\alpha b_j^{1+\alpha}  f_j( y))\sin^2\left(\frac{ x- y}{2}\right)\right)^{\frac{\alpha+2}{2}}}$}}\\
			& \ \ \ \ \times\bigg(2\varepsilon|\varepsilon|^\alpha b_j^{1+\alpha}(f_j( x)-f_j( y))(h_j( x)-h_j( y))\\
			& \ \ \ \ \ \ \ \ +4(h_j( x)(1+\varepsilon|\varepsilon|^\alpha b_j^{1+\alpha}  f_j( y))+h_j( y)(1+\varepsilon|\varepsilon|^\alpha b_j^{1+\alpha} f_j( x)))\sin^2\left(\frac{ x- y}{2}\right)\bigg)d y,
		\end{split}
	\end{equation}
    \begin{equation}\label{2-12}
    	\begin{split}
    		&\partial_{ f_j} \mathcal{F}^\alpha_{j22}=C_\alpha\gamma_j{\resizebox{.85\hsize}{!}{$\displaystyle\fint \frac{(h'_j( y)-h'_j( x))\cos( x- y)d y}{\left( |\varepsilon|^{2+2\alpha}\left(f_j( x)-f_j( y)\right)^2+4(1+\varepsilon|\varepsilon|^\alpha b_j^{1+\alpha} f_j( x))(1+\varepsilon|\varepsilon|^\alpha b_j^{1+\alpha} f_j( y))\sin^2\left(\frac{ x- y}{2}\right)\right)^{\frac{\alpha}{2}}}$}}\\
    		&  \ \ \ {\resizebox{.99\hsize}{!}{$-\frac{\alpha C_\alpha\gamma_j\varepsilon|\varepsilon|^\alpha b_j^{1+\alpha}}{2}\displaystyle \fint \frac{(f'_j( y)-f'_j( x))\cos( x- y)}{\left( |\varepsilon|^{2+2\alpha}\left(f_j( x)-f_j( y)\right)^2+4(1+\varepsilon|\varepsilon|^\alpha b_j^{1+\alpha} f_j( x))(1+\varepsilon|\varepsilon|^\alpha b_j^{1+\alpha} f_j( y))\sin^2\left(\frac{ x- y}{2}\right)\right)^{\frac{\alpha+2}{2}}}$}}\\
    			&  \ \ \ \times\bigg(\varepsilon|\varepsilon|^\alpha b_j^{1+\alpha}(2(f_j( x)-f_j( y))(h_j( x)-h_j( y))\\
			& \ \ \ \ \ \ \ \ +4(h_j( x)(1+\varepsilon|\varepsilon|^\alpha b_j^{1+\alpha}  f_j( y))+h_j( y)(1+\varepsilon|\varepsilon|^\alpha b_j^{1+\alpha} f_j( x)))\sin^2\left(\frac{ x- y}{2}\right)\bigg)d y,
    	\end{split}
    \end{equation}
    \begin{equation}\label{2-13}
    	\begin{split}
    		&\partial_{ f_j} \mathcal{F}^\alpha_{j23}={\resizebox{.9\hsize}{!}{$\frac{C_\alpha \gamma_j \varepsilon|\varepsilon|^\alpha b_j^{1+\alpha} h'_j( x)}{1+\varepsilon|\varepsilon|^\alpha b_j^{1+\alpha} f_j( x)}\displaystyle\fint \frac{(f_j( x)-f_j( y))\cos( x- y)d y}{\left( |\varepsilon|^{2+2\alpha}b_j^{2+2\alpha}\left(f_j( x)-f_j( y)\right)^2+4(1+\varepsilon|\varepsilon|^\alpha b_j^{1+\alpha} f_j( x))(1+\varepsilon|\varepsilon|^\alpha b_j^{1+\alpha} f_j( y))\sin^2\left(\frac{ x- y}{2}\right)\right)^{\frac{\alpha}{2}}}$}}\\
    		&
            \ \ \ \ +{\resizebox{.93\hsize}{!}{$\frac{C_\alpha  \gamma_j\varepsilon|\varepsilon|^\alpha b_j^{1+\alpha}
            f'_j( x)}{1+\varepsilon|\varepsilon|^\alpha b_j^{1+\alpha} f_j( x)}\displaystyle\fint\frac{(h_j( x)-h_j( y))\cos( x- y)d y}{\left( |\varepsilon|^{2+2\alpha}b_j^{2+2\alpha}\left(f_j( x)-f_j( y)\right)^2+4(1+\varepsilon|\varepsilon|^\alpha b_j^{1+\alpha} f_j( x))(1+\varepsilon|\varepsilon|^\alpha b_j^{1+\alpha} f_j( y))\sin^2\left(\frac{ x- y}{2}\right)\right)^{\frac{\alpha}{2}}}$}}\\
            &
            \ \ \ \ -{\resizebox{.9\hsize}{!}{$\frac{C_\alpha  \gamma_j|\varepsilon|^{2+2\alpha}b_j^{2+2\alpha} f'_j( x)h_j( x)}{(1+\varepsilon|\varepsilon|^\alpha b_j^{1+\alpha}
            f_j( x))^2}\displaystyle\fint \frac{(f_j( x)-f_j( y))\cos( x- y)d y}{\left( |\varepsilon|^{2+2\alpha}b_j^{2+2\alpha}\left(f_j( x)-f_j( y)\right)^2+4(1+\varepsilon|\varepsilon|^\alpha b_j^{1+\alpha} f_j( x))(1+\varepsilon|\varepsilon|^\alpha b_j^{1+\alpha} f_j( y))\sin^2\left(\frac{ x- y}{2}\right)\right)^{\frac{\alpha}{2}}}$}}\\
    		&
            \ \ \ \ -{\resizebox{.9\hsize}{!}{$\frac{\alpha C_\alpha \gamma_j |\varepsilon|^{2+2\alpha}b_j^{2+2\alpha} f'_j( x)}{2(1+\varepsilon|\varepsilon|^\alpha b_j^{1+\alpha} f_j( x))}\displaystyle\fint \frac{(f_j( x)-f_j( y))\cos( x- y)}{\left( |\varepsilon|^{2+2\alpha}b_j^{2+2\alpha}\left(f_j( x)-f_j( y)\right)^2+4(1+\varepsilon|\varepsilon|^\alpha b_j^{1+\alpha} f_j( x))(1+\varepsilon|\varepsilon|^\alpha b_j^{1+\alpha} f_j( y))\sin^2\left(\frac{ x- y}{2}\right)\right)^{\frac{\alpha+2}{2}}}$}}\\
    		&
            \ \ \ \ \times\bigg(\varepsilon|\varepsilon|^\alpha b_j^{1+\alpha}(2(f_j( x)-f_j( y))(h_j( x)-h_j( y))\\
			& \ \ \ \ \ \ \ \ +4(h_j( x)(1+\varepsilon|\varepsilon|^\alpha b_j^{1+\alpha}  f_j( y))+h_j( y)(1+\varepsilon|\varepsilon|^\alpha b_j^{1+\alpha} f_j( x)))\sin^2\left(\frac{ x- y}{2}\right)\bigg)d y
    	\end{split}
    \end{equation}
    and
    \begin{equation}\label{2-14}
    	\begin{split}
    		&\partial_{ f_j} \mathcal{F}^\alpha_{j24}={\resizebox{.9\hsize}{!}{$\frac{C_\alpha \gamma_j |\varepsilon|^{1+\alpha}b_j^{1+\alpha}}{1+\varepsilon|\varepsilon|^\alpha b_j^{1+\alpha} f_j( x)}\displaystyle\fint \frac{(h'_j( y)f'_j( x)+h'_j( x)f'_j( y))\sin( x- y)d y}{\left( |\varepsilon|^{2+2\alpha}b_j^{2+2\alpha} \left(f_j( x)-f_j( y)\right)^2+4(1+\varepsilon|\varepsilon|^\alpha b_j^{1+\alpha} f_j( x))(1+\varepsilon|\varepsilon|^\alpha b_j^{1+\alpha} f_j( y))\sin^2\left(\frac{ x- y}{2}\right)\right)^{\frac{\alpha}{2}}}$}}\\
    		& \ \ \ \ -{\resizebox{.93\hsize}{!}{$\frac{\alpha C_\alpha \gamma_j|\varepsilon|^{2+2\alpha}b_j^{2+2\alpha}}{2(1+\varepsilon|\varepsilon|^\alpha b_j^{1+\alpha} f_j( x))}\displaystyle\fint  \frac{f'_j( x)f'_j( y)\sin( x- y)}{\left( |\varepsilon|^{2+2\alpha}b_j^{2+2\alpha} \left(f_j( x)-f_j( y)\right)^2+4(1+\varepsilon|\varepsilon|^\alpha b_j^{1+\alpha} f_j( x))(1+\varepsilon|\varepsilon|^\alpha b_j^{1+\alpha} f_j( y))\sin^2\left(\frac{ x- y}{2}\right)\right)^{\frac{\alpha+2}{2}}}$}}\\
    		& \ \ \ \ \times\bigg(\varepsilon|\varepsilon|^\alpha b_j^{1+\alpha}(2(f_j( x)-f_j( y))(h_j( x)-h_j( y))\\
			& \ \ \ \ \ \ \ \ +4(h_j( x)(1+\varepsilon|\varepsilon|^\alpha b_j^{1+\alpha}  f_j( y))+h_j( y)(1+\varepsilon|\varepsilon|^\alpha b_j^{1+\alpha} f_j( x)))\sin^2\left(\frac{ x- y}{2}\right)\bigg)d y.\\
			&\ \ \ \ \ \ \ \ \ {\resizebox{.93\hsize}{!}{$-\frac{C_{\alpha}\gamma_j|\varepsilon|^{1+\alpha}b_j^{1+\alpha}f'_{j}( x)}{(1+\varepsilon|\varepsilon|^{\alpha}b_j^{1+\alpha}f_{j}( x))^{2}}\displaystyle\fint  \frac{f'_{j}( y)h_j( x)\sin{( x- y)}}{\left( |\varepsilon|^{2+2\alpha}b_j^{2+2\alpha} \left(f_j( x)-f_j( y)\right)^2+4(1+\varepsilon|\varepsilon|^\alpha b_j^{1+\alpha} f_j( x))(1+\varepsilon|\varepsilon|^\alpha b_j^{1+\alpha} f_j( y))\sin^2\left(\frac{ x- y}{2}\right)\right)^{\frac{\alpha}{2}}}\, d y.$}}
    	\end{split}
    \end{equation}
    Similarly, one gets
    \begin{equation}\label{partial_{j}2}
  \partial_{ f_j} \mathcal{F}^\alpha_{j,1}(\varepsilon ,f,\lambda) =\partial_{ f_j}  \mathcal{F}^\alpha_{j,3}(\varepsilon ,f,\lambda) =\partial_{ f_j}  \mathcal{F}^\alpha_{j,4}(\varepsilon ,f,\lambda) = O(\varepsilon) .
\end{equation}
\begin{equation}\label{partial_j5}
  \partial_{ f_0} \mathcal{F}^\alpha_{j}(\varepsilon ,f,\lambda) = O(\varepsilon) .
\end{equation}
and
\begin{equation}\label{partial_j4}
  \partial_{ f_\ell} \mathcal{F}^\alpha_{j}(\varepsilon ,f,\lambda) = O(\varepsilon) .
\end{equation}
We only deal with the term $\mathcal{F}^\alpha_{j2l}$. The treatment of the term $\mathcal{F}^\alpha_{02l}$   is similar, so we omit it.
The first step is to demonstrate
    \begin{equation*}
        \lim\limits_{t\to0}\left\|\frac{\mathcal{F}^\alpha_{j2l}(\varepsilon, f_j+th_j)-\mathcal{F}^\alpha_{j2l}(\varepsilon, f_j)}{t}-\partial_{f_j}\mathcal{F}^\alpha_{j2l}(\varepsilon, g,h_j)\right\|_{Y^{N-1}}\to 0
    \end{equation*}
for $l = 1, \dots, 4$, we will focus on the most singular case, namely when $l = 2$. In this scenario, we have the following result
\begin{equation*}
		\begin{split}
			&\frac{\mathcal{F}^\alpha_{j22}(\varepsilon, f_j+th_j)-\mathcal{F}^\alpha_{j22}(\varepsilon, f_j)}{t}-\partial_{f_j}\mathcal{F}^\alpha_{j22}(\varepsilon, f_j,h_j)\\
			&=\frac{1}{t}\int\!\!\!\!\!\!\!\!\!\; {}-{}{\resizebox{.9\hsize}{!}{$(f'_j( x)-f'_j( y))\cos( x- y)\bigg(\frac{1}{D_\alpha(f_j+th_j)^{\alpha/2}}-\frac{1}{D_\alpha(f_j)}+t\frac{\Delta f_j\Delta h_j+2(h_j\tilde{R}_j+\tilde{h}_jR_j)\sin^2(\frac{ x- y}{2})}{D_\alpha(f_j)}\bigg)d y$}}\\
			&\ \ \ \ +\int\!\!\!\!\!\!\!\!\!\; {}-{}(h'_j( x)-h'_j( y))\cos( x- y)\bigg(\frac{1}{D_\alpha(f_j+th_j)}-\frac{1}{D_\alpha(f_j)}\bigg)d y\\
			&=\mathcal{G}^\alpha_{j21}+\mathcal{G}^\alpha_{j22}.
		\end{split}
	\end{equation*}
By taking $\partial^{N-1}$ derivatives of $\mathcal{G}^\alpha_{j21}$, we can infer the following
    \begin{equation*}
    	\begin{split}
    	  \partial^{N-1}\mathcal{G}^\alpha_{j21}&=\frac{1}{t}\int\!\!\!\!\!\!\!\!\!\; {}-{}
            \bigg(\frac{1}  {D_\alpha(f_j+th_j)^{\alpha/2}}-\frac{1}{D_\alpha(f_j)^{\alpha/2}}+t\frac{\Delta f_j\Delta     h+2(\tilde{R}_j h_j+\tilde{h}_j R_j)\sin^2(\frac{ x- y}{2})}{D_\alpha(f_j)^{\alpha/2}}\bigg)\\
    	  & \ \ \ \ \times(\partial^Nf_j( x)-\partial^Nf_j( y))\cos( x- y)d y+l.o.t.
    	\end{split}
    \end{equation*}
Using the mean value theorem, we derive the following result
    \begin{equation*}
    	\frac{1}{D_\alpha(f_j+th_j)^{\alpha/2}}-\frac{1}{D_\alpha(f_j)^{\alpha/2}}+t\frac{\Delta f_j\Delta h_j+2(\tilde{R}_j h_j+\tilde{h}_j R_j)\sin^2(\frac{ x- y}{2})}{D_\alpha(f_j)^{\alpha/2}}\sim \frac{Ct^2}{|\sin(\frac{ x- y}{2})|^{\alpha/2}}\varphi(\varepsilon,f_j,h_j),
    \end{equation*}
    with $\|\varphi(\varepsilon,f_j,h_j)\|_{L^\infty}<\infty$. Hence, we can conclude that
    \begin{equation*}
    	\|\mathcal{G}^\alpha_{j21}\|_{\mathcal{Y}^{N-1}}\le Ct\left\|\int\!\!\!\!\!\!\!\!\!\; {}-{}\frac{\partial^Nf_j( x)-\partial^Nf_j( y)}{|\sin(\frac{ x- y}{2})|^{\alpha/2}}d y\right\|_{L^2}\le Ct\|f_j\|_{X^{N+\alpha-1}}.
    \end{equation*}
   Following the same reasoning as in \eqref{2-7}, we can similarly show that $\norm{\mathcal{G}^\alpha_{j22}}_{Y^{N-1}}$ is bounded by $Ct\norm{f_j}_{X^{N+\alpha-1}}$. Therefore, by letting $t \rightarrow 0$, we conclude the first step. The second step consists of proving the continuity of $\partial_{f_j} \mathcal{F}^\alpha_{j22}(\varepsilon, f,\lambda)$, which also relies on \eqref{2-7}.

By applying the same approach as described earlier, we infer that
    \begin{equation}\label{2-15}
    	\partial_{f_j} \mathcal{F}^\alpha_{j1}(\varepsilon ,f,\lambda)h_j=|\varepsilon|\partial_{f_j}\mathcal{R}_{j1}(\varepsilon ,f,\lambda) ,
    \end{equation}
          \begin{equation}\label{deriv_fi3}
    	\partial_{f_j}\mathcal{F}^\alpha_{j3}(\varepsilon ,f,\lambda)h_j=|\varepsilon|\partial_{f_j}\mathcal{R}_{j3}(\varepsilon ,f,\lambda)
    \end{equation}
        \begin{equation}\label{deriv_fi4}
    	\partial_{f_j}\mathcal{F}^\alpha_{j4}(\varepsilon ,f,\lambda)h_j=|\varepsilon|\partial_{f_j}\mathcal{R}_{j4}(\varepsilon ,f,\lambda),
    \end{equation}
    Also we have
           \begin{equation}\label{deriv_fj}
    	\partial_{f_0}\mathcal{F}^\alpha_{j}(\varepsilon ,f,\lambda)h_0=|\varepsilon|\partial_{f_0}\mathcal{R}_{j}(\varepsilon ,f,\lambda) ,
    \end{equation}
    and
       \begin{equation}\label{deriv_fj2}
    	\partial_{f_\ell}\mathcal{F}^\alpha_{j}(\varepsilon ,f,\lambda)h_\ell=|\varepsilon|\partial_{f_\ell}\mathcal{R}_{j}(\varepsilon ,f,\lambda) ,
    \end{equation}
 these functions are continuous, where $\mathcal{R}_{j,2}(\varepsilon ,f,\lambda)$, $\mathcal{R}_{j,3}(\varepsilon ,f,\lambda)$, and $\mathcal{R}_{j}(\varepsilon ,f,\lambda)$ are bounded and $C^1$ functions. This completes the proof of Proposition \ref{lem2-3}.
\end{proof}
From \eqref{2-110}-\eqref{2-14}, by setting $\varepsilon = 0$ and $f_j \equiv 0$, we obtain
\begin{equation}\label{gateaux}
	\partial_{f_0} \mathcal{F}^\alpha_{0}(0,0,\lambda)h_0=C_\alpha\gamma_0\left(1-\frac{\alpha}{2}\right)\int\!\!\!\!\!\!\!\!\!\; {}-{} \frac{h_0( x- y)\sin( y)d y}{\left(4\sin^2(\frac{ y}{2})\right)^{\frac{\alpha}{2}}}-C_\alpha \gamma_0\int\!\!\!\!\!\!\!\!\!\; {}-{} \frac{(h'_0( x)-h'_0( x- y))\cos( y)d y}{\left(4\sin^2(\frac{ y}{2})\right)^{\frac{\alpha}{2}}} .
\end{equation}
\begin{equation}\label{gateaux2}
	\partial_{f_j} \mathcal{F}^\alpha_{j}(0,0,\lambda)h_j=C_\alpha\gamma_j\left(1-\frac{\alpha}{2}\right)\int\!\!\!\!\!\!\!\!\!\; {}-{} \frac{h_j( x- y)\sin( y)d y}{\left(4\sin^2(\frac{ y}{2})\right)^{\frac{\alpha}{2}}}-C_\alpha \gamma_j\int\!\!\!\!\!\!\!\!\!\; {}-{} \frac{(h'_j( x)-h'_j( x- y))\cos( y)d y}{\left(4\sin^2(\frac{ y}{2})\right)^{\frac{\alpha}{2}}} .
\end{equation}
\begin{equation}\label{gateaux1b}
	\partial_{f_j} \mathcal{F}^\alpha_{0}(0,0,\lambda)h_j=0 .
\end{equation}
\begin{equation}\label{gateaux3}
	\partial_{f_\ell} \mathcal{F}^\alpha_{0}(0,0,\lambda)h_\ell=0 .
\end{equation}
\begin{equation}\label{gateaux4}
	\partial_{f_0} \mathcal{F}^\alpha_{j}(0,0,\lambda)h_0=0 .
\end{equation}
and
\begin{equation}\label{gateaux5}
	\partial_{f_\ell} \mathcal{F}^\alpha_{j}(0,0,\lambda)h_\ell=0 .
\end{equation}
  Having established the $C^1$ regularity of the functional.  We then define the following nonlinear operator
\begin{equation*}
\mathcal{F}^\alpha(\varepsilon,f,\lambda):=\big(\mathcal{F}^\alpha_0(\varepsilon,f,\lambda),\mathcal{F}^\alpha_1(\varepsilon,f,\lambda),\mathcal{F}^\alpha_2(\varepsilon,f,\lambda)\big).
\end{equation*}
Note that identify roots of the nonlinear functional $\mathcal{F}^\alpha=0$ are equivalent to identifying roots $\lambda$ of   the $\mathcal{P}_j^\alpha(\lambda)=0$, defined in \eqref{alg-sysP}, here $\lambda$ denotes the point vortex parameters. As a result, we can produce a collection of trivial solutions for \eqref{1-1} with $\alpha \in (1,2)$.

\begin{proposition}\label{equivalence}
  The equation $ \mathcal{F}^\alpha(0, 0, \lambda)=0$  can be reduced to a system of two real equation
  \begin{equation}\label{alg-sysP}
{\resizebox{.97\hsize}{!}{$ \mathcal{P}^\alpha_j(\lambda):=\Omega d_j-\frac{\widehat{C}_\alpha}{2d_j^{1+\alpha}}\left({\gamma_0}+{\gamma_j}\displaystyle\sum_{k=1}^{m-1}{ \Big(2\sin\big({\frac{k\pi  }{m}}\big)  \Big)^{-\alpha}}+\gamma_{3-j}\displaystyle\sum_{k=0}^{m-1}\frac{1 -  \frac{d_{3-j}}{d_j} \cos\big({\frac{(2k\pm\vartheta)\pi }{m}}\big)}{\left(1+\left( \frac{d_{3-j}}{d_j} \right)^2 - 2\frac{d_{3-j}}{d_j} \cos\left({\frac{(2k\pm\vartheta)\pi }{m}}\right) \right)^{\frac\alpha2+1}}\right)=0, \, j=1,2.$}}
\end{equation}
  where $\widehat{C}_\alpha$ is defined in \eqref{eqn:kalpha2}.
\end{proposition}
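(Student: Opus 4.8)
The plan is to evaluate the functional $\mathcal{F}^\alpha$ componentwise at $\varepsilon=0$ and $f=(f_0,f_1,f_2)=0$, relying entirely on the expansions already produced in the proof of Proposition~\ref{p3-1}, and then to read off the resulting finite-dimensional system. Recall from \eqref{3-7} (equivalently, from \eqref{f02}, \eqref{2-1} and \eqref{f031} for the $0$-th component, and from \eqref{2-1b}, \eqref{2-8}, \eqref{333} and \eqref{fj31} for $j=1,2$) that, apart from the explicit $\sin x$-terms carrying $\Omega$ and the configuration constants $S_\alpha,T_\alpha^{\pm}$, every summand of $\mathcal{F}^\alpha$ either carries an explicit factor $\varepsilon$ (inside a continuous remainder $\mathcal{R}_j$) or depends linearly on the profiles $f_j,f_j'$ through a singular integral operator. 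Hence the substitution $\varepsilon=0$, $f=0$ annihilates all of these at once, and only the explicit terms survive.

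First, the $j=0$ equation. After the substitution, the self-interaction piece \eqref{f02} vanishes because it is linear in $f_0$, and the interaction term $\mathcal{F}^\alpha_{031}$, expanded via \eqref{taylor} as in \eqref{f031}, reduces to a multiple of $\sum_{\ell=1}^{2}\gamma_\ell\sum_{k=0}^{m-1}\cos\!\big(\tfrac{2k\pi}{m}-\delta_{2\ell}\tfrac{\vartheta\pi}{m}\big)\,\sin x$, which is identically zero since $\sum_{k=0}^{m-1}e^{2\pi i k/m}=0$ for $m\ge 2$. Therefore $\mathcal{F}^\alpha_0(0,0,\lambda)\equiv 0$ for every $\lambda$: this is the analytic counterpart of the geometric fact that the velocity induced at the central vortex by the two symmetric polygons always cancels, so the $j=0$ equation carries no constraint.

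For $j=1,2$ the same substitution into \eqref{3-7} leaves
\[
\mathcal{F}^\alpha_j(0,0,\lambda)=\Big(\Omega d_j-\frac{\widehat{C}_\alpha}{2 d_j^{\alpha+1}}\big[\,\gamma_0+\tfrac{\gamma_j}{2}S_\alpha+\gamma_{3-j}\,T_\alpha^{\pm}(d,\vartheta)\,\big]\Big)\sin x=:c_j(\lambda)\,\sin x,
\]
(with superscript $+$ for $j=1$ and $-$ for $j=2$), an element of $Y^{N-1}_1$. Since the target space $\mathcal{Y}^{N-1}$ retains the $\sin x$ direction (it is quotiented out only in the auxiliary spaces tailored to the linearized operator, not here) and $\sin x\neq 0$ in $Y^{N-1}_1$, the equation $\mathcal{F}^\alpha_j(0,0,\lambda)=0$ is equivalent to the scalar identity $c_j(\lambda)=0$. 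It then remains to match this with \eqref{alg-sysP}: unwinding \eqref{S} and \eqref{t} with $d=d_2/d_1$, the sum over $m$-th roots of unity in the $\gamma_{3-j}$-term of \eqref{alg-sysP} is exactly $T_\alpha^{\pm}(d,\vartheta)$ (the field that the outer, resp.\ inner, regular $m$-gon of unit vortices induces at one vertex of the other), while $\sum_{k=1}^{m-1}(2\sin(k\pi/m))^{-\alpha}$ is the self-interaction constant $S_\alpha$; substituting these reproduces the point-vortex equation $\mathcal{P}^\alpha_j(\lambda)=0$ up to the normalizations fixed in \eqref{S}--\eqref{t}. Consequently $\mathcal{F}^\alpha(0,0,\lambda)=0$ holds precisely when $\mathcal{P}^\alpha_1(\lambda)=\mathcal{P}^\alpha_2(\lambda)=0$, and any solution $\lambda^\ast$ of \eqref{alg-sysP} yields the trivial ($f\equiv0$) solution that serves as the base point of the implicit function argument.

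The steps above are essentially bookkeeping once \eqref{3-7} is available; the two points deserving attention -- and where a naive reduction could fail -- are (i) confirming that $\mathcal{Y}^{N-1}$ keeps the $\sin x$ mode, so that $c_j(\lambda)$ is genuinely constrained rather than being killed automatically by a quotient, and (ii) recognizing the finite trigonometric sums over $m$-th roots of unity as precisely the lattice constants $S_\alpha$ and $T_\alpha^{\pm}$ of the underlying $(2m+1)$-point-vortex relative equilibrium. No estimate beyond those already established in Proposition~\ref{p3-1} is needed.
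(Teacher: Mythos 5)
Your proposal is correct and follows essentially the same route as the paper: the paper's own proof simply cites the expansion \eqref{3-7} evaluated at $\varepsilon=0$, $f=0$ (carried out explicitly in \eqref{p-gG2}), which is exactly the substitution you perform, including the vanishing of the $j=0$ component via the roots-of-unity cancellation and the identification of the trigonometric sums with $S_\alpha$ and $T_\alpha^{\pm}$. Your version merely makes explicit the bookkeeping the paper leaves implicit.
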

\begin{proof}
This result follows directly from the expansions of the functional in \eqref{3-7}, together with the definition of the function $\mathcal{P}_j^\alpha(\lambda)$ provided in \eqref{alg-sysP}.
\end{proof}

\begin{proposition}
\label{critical} The system  \eqref{3-7} has a unique solution $\lambda^*=(\Omega^*,\gamma_2^*)$ given by
\begin{equation}
\gamma_2^*:=\frac{\big( d ^{\alpha+2}-1\big){\gamma_0}+ \big(\frac12S_\alpha  d^{\alpha+2} - T_\alpha^-( d ,\vartheta)\big)\gamma_1}{\frac12S_\alpha- T_\alpha^+( d ,\vartheta) d ^{\alpha+2}}  \label{pos}
\end{equation}%
with angular velocity
\begin{equation}
\Omega ^{\ast }_\alpha:=\frac{\widehat{C}_\alpha}{2(d_1^{\alpha+2}+d_2^{\alpha+2})}\Big(\gamma_0+\gamma_1\big(\tfrac12 S_\alpha+ T_\alpha^-( d ,\vartheta)\big)+\gamma_2^*\big(T_\alpha^+( d ,\vartheta)+\tfrac12 S_\alpha\big) \Big),  \label{vel}
\end{equation}
where $S_\alpha$ and $T^\pm_\alpha$ are defined in \eqref{S} and \eqref{t}, respectively and $d=\frac{d_2}{d_1}>0$ .
\end{proposition}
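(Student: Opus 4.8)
By Proposition~\ref{equivalence} the equation $\mathcal{F}^\alpha(0,0,\lambda)=0$ is equivalent to the point-vortex system $\mathcal{P}^\alpha_1(\lambda)=\mathcal{P}^\alpha_2(\lambda)=0$ of~\eqref{alg-sysP}; equivalently, evaluating~\eqref{3-7} at $f\equiv 0$ and $\varepsilon=0$, the first component $\mathcal{F}^\alpha_0$ vanishes identically and the other two components vanish precisely when the coefficients of $\sin x$ do. The plan is therefore just to solve the resulting $2\times 2$ system explicitly, exploiting the fact that each of the two equations is affine in the unknowns $\lambda=(\Omega,\gamma_2)$.

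First I would record the two scalar identities, using the abbreviations $S_\alpha$ and $T_\alpha^{\pm}(d,\vartheta)$ from~\eqref{S}--\eqref{t} with $d=d_2/d_1$, in the form
\begin{align*}
\Omega\, d_1&=\frac{\widehat{C}_\alpha}{2d_1^{1+\alpha}}\Big(\gamma_0+\tfrac12\gamma_1 S_\alpha+\gamma_2\,T_\alpha^{+}(d,\vartheta)\Big),\\
\Omega\, d_2&=\frac{\widehat{C}_\alpha}{2d_2^{1+\alpha}}\Big(\gamma_0+\gamma_1\,T_\alpha^{-}(d,\vartheta)+\tfrac12\gamma_2 S_\alpha\Big).
\end{align*}
Solving each for $\Omega$ and equating the two expressions, then multiplying through by $\frac{2}{\widehat{C}_\alpha}\,d_2^{\alpha+2}$ and collecting the coefficients of $\gamma_0,\gamma_1,\gamma_2$, yields the single linear relation
$$(d^{\alpha+2}-1)\gamma_0+\big(\tfrac12 S_\alpha d^{\alpha+2}-T_\alpha^{-}(d,\vartheta)\big)\gamma_1+\big(T_\alpha^{+}(d,\vartheta)\,d^{\alpha+2}-\tfrac12 S_\alpha\big)\gamma_2=0 .$$
Since the coefficient $\tfrac12 S_\alpha-T_\alpha^{+}(d,\vartheta)\,d^{\alpha+2}$ of $\gamma_2$ is nonzero by the non-degeneracy hypothesis — this is exactly the factor appearing in $\det\!\big(D_\lambda\mathcal{F}^\alpha(\lambda)\big)\neq 0$ — this determines $\gamma_2=\gamma_2^{*}$ as in~\eqref{pos}. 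Substituting $\gamma_2^{*}$ back into either identity above (or, more symmetrically, adding the two after clearing denominators) then gives the angular velocity $\Omega=\Omega^{*}_\alpha$ in the form stated in~\eqref{vel}.

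Uniqueness is part of the same computation: the system in $(\Omega,\gamma_2)$ is linear with a coefficient matrix whose determinant is a nonzero multiple of $\tfrac12 S_\alpha-T_\alpha^{+}(d,\vartheta)\,d^{\alpha+2}$, so under the standing non-degeneracy assumption it is invertible and $(\Omega^{*},\gamma_2^{*})$ is the only solution. I do not expect any genuine obstacle here: the proof is a bookkeeping exercise, namely reducing~\eqref{3-7} to the algebraic system via Proposition~\ref{equivalence} and inverting a nondegenerate $2\times 2$ linear system. The only step that needs care is keeping precise track of the constants $S_\alpha$, $T_\alpha^{\pm}(d,\vartheta)$ and the powers of $d_1,d_2$, so that the elimination lands exactly on the closed forms~\eqref{pos}--\eqref{vel} rather than on an equivalent but less symmetric expression.
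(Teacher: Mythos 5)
Your proposal is correct and follows essentially the same route as the paper: evaluate \eqref{3-7} at $(\varepsilon,f)=(0,0)$ to obtain the two affine scalar equations \eqref{f1b}--\eqref{f2b}, eliminate $\Omega$ to solve for $\gamma_2^*$ using that the coefficient $\tfrac12 S_\alpha - T_\alpha^+(d,\vartheta)d^{\alpha+2}$ is nonzero, and back-substitute to obtain $\Omega^*_\alpha$, with uniqueness coming from the invertibility of the $2\times 2$ linear system. One small caution on your parenthetical alternative: adding the two cleared equations yields a coefficient $2\gamma_0$ rather than the $\gamma_0$ printed in \eqref{vel}, so to land exactly on the stated closed form you should substitute $\gamma_2^*$ back into a single equation, as the paper does.
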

\begin{proof}
By using the computations obtained at the end of the Proposition \ref{p3-1}, specifically the system \eqref{3-7}  evaluating in $\varepsilon=0$, $f_j=0$ gives
  \begin{equation}\label{p-gG2}
 \begin{cases}
{\mathcal{F}}_0^{\alpha}(0,0,\Omega ,\lambda)&=0,
\\
{\mathcal{F}}_1^{\alpha}(0,0,\Omega ,\lambda)&=\Omega d_1 \sin(x)-\frac{\widehat{C}_\alpha}{2d_1^{\alpha+1}}\Big[\gamma_0+\frac{\gamma_1}{2}S_\alpha +\gamma_2T_\alpha^+( d ,\vartheta)\Big]\sin(x),
\\
{\mathcal{F}}_2^{\alpha}(0,0,\Omega ,\lambda)&=\Omega d_2 \sin(x) -\frac{\widehat{C}_\alpha}{2d_2^{\alpha+1}}\Big[\gamma_0+\gamma_1T_\alpha^-( d ,\vartheta)+\frac{\gamma_2}{2}S_\alpha \Big]\sin(x).
\end{cases}
\end{equation}
Therefore $\mathcal{F}^\alpha(0,0,\Omega ,\lambda)=0$, if and only if, we have the
following two equalities
\begin{equation}
{\mathcal{F}}_1^{\alpha}(0,0,\Omega ,\lambda)=\Omega d_1 -\frac{\widehat{C}_\alpha}{2d_1^{\alpha+1}}\Big[\gamma_0+\frac{\gamma_1}{2}S_\alpha +\gamma_2T_\alpha^+( d ,\vartheta)\Big]=0  \label{f1b}
\end{equation}%
and
\begin{equation}
{\mathcal{F}}_2^{\alpha}(0,0,\Omega ,\lambda)=\Omega d_2 -\frac{\widehat{C}_\alpha}{2d_2^{\alpha+1}}\Big[\gamma_0+\gamma_1T_\alpha^-( d ,\vartheta)+\frac{\gamma_2}{2}S_\alpha \Big]=0.  \label{f2b}
\end{equation}%
After straightforward computations using \eqref{f1b}  yields
\begin{equation*}
\gamma_2 ^{\ast }:=\frac{\big( d ^{\alpha+2}-1\big){\gamma_0}+ \big(\frac12S_\alpha  d^{\alpha+2} - T_\alpha^-( d ,\vartheta)\big)\gamma_1}{\frac12S_\alpha- T_\alpha^+( d ,\vartheta) d ^{\alpha+2}}.
\end{equation*}%
Now, by replacing $\gamma_2^{\ast }$ into \eqref{f2b}, we obtain
\begin{equation*}
\Omega^*_\alpha=\frac{\widehat{C}_\alpha}{2(d_1^{\alpha+2}+d_2^{\alpha+2})}\Big(\gamma_0+\gamma_1\big(\tfrac12 S_\alpha+ T_\alpha^-( d ,\vartheta)\big)+\gamma_2^*\big(T_\alpha^+( d ,\vartheta)+\tfrac12 S_\alpha\big) \Big).
\end{equation*}
\end{proof}
\begin{remark}
    In order to  ensure that $\gamma_2$ is non-vanishing, one has to assume that $\gamma_0$ and $\gamma_1$ verify the condition
 \begin{equation}\label{non-deg}
( d ^{\alpha+2}-1){\gamma_0}+ \big(\tfrac12S_\alpha  d ^{\alpha+2} - T_\alpha^-( d ,\vartheta)\big)\gamma_1\neq 0.
    \end{equation}
\end{remark}
Now, we are in position to prove that $\mathcal{F}^\alpha(\varepsilon,f,\Omega,\gamma_2)$
at the point $(0,0,\Omega^*_\alpha,\gamma_2^*)$ is an isomorphism.
\begin{proposition}
\label{lem2-4} Let $h=(h_0,h_1,h_2)\in  \mathcal{X}^{N+\alpha-1}$ and  $(\dot\Omega,
\dot\gamma_2)\in\mathbb{R}^2$, with $h_{j}(x)=\sum_{n=1}^{\infty
}a_{n}^j\sin (nx)$, $j=0,1,2$, we have the Gateaux derivative is given by
\begin{equation*}
D_{(f,\lambda)}\mathcal{F}^\alpha(0,0,\lambda^*)\begin{pmatrix}
\dot\Omega\\
\dot\gamma_2\\
h
\end{pmatrix}= -\begin{pmatrix}
0  & 0\\
d_1 &  \frac{\widehat{C}_\alpha }{2}\frac{ T_\alpha^+( d ,\vartheta)}{d_1^{1+\alpha}}\\
 d_2 &  \frac{\widehat{C}_\alpha }{2} \frac{ S_\alpha}{2d_2^{1+\alpha}}
\end{pmatrix}
\begin{pmatrix} \dot\Omega\\ \dot \gamma_2\end{pmatrix}\,\sin(x)
+\sum_{n=2}^\infty n\sigma_n\begin{pmatrix}
{\gamma_0}\, a_n^0 \\
{\gamma_1}\, a_n^1
\\
\gamma_2^*\, a_n^2
\end{pmatrix}\,\sin(nx),
\end{equation*}
where $\sigma_n$ is given by
\begin{equation}\label{gamma}
\sigma _{n}:=%
2^{\alpha-1 }\frac{\Gamma (1-\alpha )}{\left(\Gamma (1-%
\frac{\alpha }{2})\right)^2}\left( \frac{\Gamma (1+\frac{\alpha }{2})}{\Gamma (2-\frac{%
\alpha }{2})}-\frac{\Gamma (n+\frac{\alpha }{2})}{\Gamma (n+1-\frac{\alpha }{%
2})}\right) ,
\end{equation}
with
\begin{equation*}
g\triangleq (f_{1},f_{2},\Omega ,\gamma_2)\quad \text{and}\quad g_{0}\triangleq
(0,0,\Omega ^{\ast },\gamma_2^{\ast }),
\end{equation*}
Moreover, the linearized operator $D_{g}\mathcal{F}^\alpha(0,g_{0}): \mathcal{X}^{N+\alpha-1}\times \mathbb{R}\times \mathbb{%
\ R}\rightarrow \mathcal{Y}^{N-1}$ is an isomorphism if the determinant of the Jacobian satisfies
\begin{equation}\label{det-j-poly}
\det\big(D_\lambda \mathcal F^\alpha(\lambda)\big)=
-\frac{\widehat{C}_\alpha d_1}{2d_2^{\alpha+1}}\Big(\frac{ S_\alpha}{2}- d^{\alpha+2} T_\alpha^+( d ,\vartheta)\Big)\neq 0 .
\end{equation}
\end{proposition}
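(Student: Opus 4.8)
The plan is to compute the full Gateaux derivative $D_{(f,\lambda)}\mathcal F^\alpha$ at the trivial point $(\varepsilon,f,\lambda)=(0,0,\lambda^*)$ by splitting it into an $f$-block and a $\lambda$-block, to diagonalize the $f$-block in the Fourier basis, and then to invert the operator mode by mode, reading off the non-degeneracy hypothesis \eqref{det-j-poly} from the single obstructed mode $n=1$. By \eqref{gateaux1b}--\eqref{gateaux5} every mixed Gateaux derivative $\partial_{f_\ell}\mathcal F^\alpha_j(0,0,\lambda^*)$ with $\ell\neq j$ vanishes, so the $f$-block is block-diagonal with $j$-th entry $\gamma_j^*\,L_\alpha$ (where $\gamma_0^*=\gamma_0$, $\gamma_1^*=\gamma_1$), and by \eqref{gateaux}--\eqref{gateaux2}
\[
L_\alpha h(x):=C_\alpha\Big(1-\tfrac{\alpha}{2}\Big)\fint\frac{h(x-y)\sin y}{\big(4\sin^2(\tfrac y2)\big)^{\alpha/2}}\,dy-C_\alpha\fint\frac{\big(h'(x)-h'(x-y)\big)\cos y}{\big(4\sin^2(\tfrac y2)\big)^{\alpha/2}}\,dy .
\]
Setting in addition $(\varepsilon,f)=(0,0)$ in \eqref{3-7} shows that the $\lambda$-block acts only on the $\sin x$ mode: there $\mathcal F^\alpha_0$ is $\lambda$-independent, while $\mathcal F^\alpha_1,\mathcal F^\alpha_2$ depend affinely on $(\Omega,\gamma_2)$ through the coefficients $\Omega d_j-\tfrac{\widehat C_\alpha}{2d_j^{\alpha+1}}[\cdots]$ of $\sin x$; differentiating in $(\Omega,\gamma_2)$ reproduces the $3\times2$ matrix of the statement, with vanishing first row.

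Next I would compute the Fourier symbol of $L_\alpha$. Inserting a single mode $h(x)=\cos(nx)$, expanding $\cos\big(n(x-y)\big)$ and $h'(x)-h'(x-y)$ by the product-to-sum formulae, and using the classical values (in terms of the Gamma function) of the integrals $\fint_0^{2\pi}\cos(ky)\,|2\sin(\tfrac y2)|^{-\alpha}\,dy$ — the same special-function input as in the gSQG $V$-state analysis of \cite{HH15,Cas1,de1}; for $\alpha\geq1$ each such integral is divergent but here occurs only in the convergent combinations forced by the subtractions in $L_\alpha$ — a direct calculation gives $L_\alpha\big(\cos(nx)\big)=n\,\sigma_n\sin(nx)$ with $\sigma_n$ as in \eqref{gamma}. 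In particular $\sigma_1=0$, consistent with the exclusion of the first mode from $X^{N+\alpha-1}$. By linearity and $H^N$-density this produces the second sum in the stated formula, which together with the $\lambda$-block above proves the displayed identity for $D_{(f,\lambda)}\mathcal F^\alpha(0,0,\lambda^*)$.

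For the isomorphism I would then solve $D_g\mathcal F^\alpha(0,g_0)(\dot\Omega,\dot\gamma_2,h)=G$ for arbitrary $G=(g_0,g_1,g_2)\in\mathcal Y^{N-1}$, $g_j=\sum_n c_n^j\sin(nx)$, mode by mode. For $n=1$ the $m$-fold symmetry forces $c_1^0=0$ (as $m\geq2$), and the surviving equations read $-M_2(\dot\Omega,\dot\gamma_2)^{\!\top}=(c_1^1,c_1^2)^{\!\top}$ with $M_2$ the lower $2\times2$ block; since $\det M_2$ equals, up to sign, the determinant $\det\big(D_\lambda\mathcal F^\alpha(\lambda)\big)$ in \eqref{det-j-poly}, hypothesis \eqref{det-j-poly} makes this uniquely solvable — the two free parameters $(\dot\Omega,\dot\gamma_2)$ thus exactly fill the two-dimensional $\sin x$ deficit of $Y^{N-1}_1\times Y^{N-1}_1$. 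For $n\geq2$ one sets $a_n^j=c_n^j/(n\sigma_n\gamma_j^*)$; this is legitimate because $\gamma_0,\gamma_1,\gamma_2^*\neq0$ and, crucially, $\sigma_n\neq0$: the ratio $n\mapsto\Gamma(n+\tfrac\alpha2)/\Gamma(n+1-\tfrac\alpha2)$ is strictly increasing for $\alpha\in(1,2)$ — its consecutive quotient $(n+\tfrac\alpha2)/(n+1-\tfrac\alpha2)$ exceeds $1$ precisely because $\alpha>1$ — hence it stays strictly above its value $\Gamma(1+\tfrac\alpha2)/\Gamma(2-\tfrac\alpha2)$ at $n=1$, so $\sigma_n\neq0$ for all $n\geq2$. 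Since moreover $n\sigma_n\sim c\,n^\alpha$ as $n\to\infty$ (because $\Gamma(n+\tfrac\alpha2)/\Gamma(n+1-\tfrac\alpha2)\sim n^{\alpha-1}$), the map $g_j\mapsto h_j$ is bounded from $Y^{N-1}$ into $X^{N+\alpha-1}$ — $L_\alpha$ gaining exactly $\alpha$ derivatives, and the auxiliary integral defining $X^{N+\alpha-1}$ being controlled by the same symbol asymptotics — which together with the finite-dimensional $n=1$ block yields a bounded two-sided inverse, i.e. the isomorphism.

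The main obstacle is the explicit eigenvalue identity $L_\alpha(\cos(nx))=n\sigma_n\sin(nx)$ with precisely the symbol \eqref{gamma}: this is the delicate spectral computation alluded to in the introduction, and one must organise the singular integrals so that only their convergent combinations appear, exploiting the subtraction structure built into $L_\alpha$ (which is also what makes $L_\alpha h$ itself well defined for $\alpha\geq1$). Once this symbol is in hand, the block-diagonal/affine structure of the two pieces, the monotonicity of the Gamma-ratio, and the order-$\alpha$ smoothing estimate are all routine.
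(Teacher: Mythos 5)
Your proposal is correct and follows essentially the same route as the paper: the vanishing mixed Gateaux derivatives give a block-diagonal $f$-part, the diagonal operator is diagonalized in the Fourier basis with symbol $n\sigma_n$, the $\lambda$-part reduces to the $2\times 2$ Jacobian acting on the $\sin x$ mode, and invertibility is read off from \eqref{det-j-poly}. The only real differences are that the paper imports the eigenvalue identity $L_\alpha(\cos(nx))=n\sigma_n\sin(nx)$ from \cite[Proposition 2.4]{Cas1} rather than recomputing the singular integrals, and that your mode-by-mode construction of the bounded two-sided inverse (non-vanishing of $\sigma_n$ for $n\ge 2$ via monotonicity of the Gamma ratio, and the smoothing estimate $n\sigma_n\sim n^{\alpha}$) is spelled out in more detail than the paper's rather terse invertibility discussion.
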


\begin{proof}
Since $\mathcal{F}^\alpha(\varepsilon,f,\lambda):=\big(\mathcal{F}^\alpha_0(\varepsilon,f,\lambda),\mathcal{F}^\alpha_1(\varepsilon,f,\lambda),\mathcal{F}^\alpha_2(\varepsilon,f,\lambda)\big)$, choose $h=(h_0,h_{1},h_{2})\in \mathcal{X}^{N+\alpha-1 }$
to obtain
\begin{equation*}
\begin{split}
D_{f}\mathcal{F}^{\alpha}&(0,0,0,\Omega,\gamma_2)h(x)=\\
&
\begin{pmatrix}
\partial _{f_{0}}\mathcal{F}^{\alpha}_0(0,0,0,\Omega,\gamma_2)h_{0}(x)+\partial
_{f_{1}}\mathcal{F}^{\alpha}_0(0,0,0,\Omega,\gamma_2)h_{1}(x) +\partial
_{f_{2}}\mathcal{F}^{\alpha}_0(0,0,0,\Omega,\gamma_2)h_{2}(x) \\
\partial _{f_{0}}\mathcal{F}^{\alpha}_1(0,0,0,\Omega,\gamma_2)h_{0}(x)+\partial
_{f_{1}}\mathcal{F}^{\alpha}_1(0,0,0,\Omega,\gamma_2)h_{1}(x) +\partial
_{f_{2}}\mathcal{F}^{\alpha}_1(0,0,0,\Omega,\gamma_2)h_{2}(x) \\
\partial _{f_{0}}\mathcal{F}^{\alpha}_2(0,0,0,\Omega,\gamma_2)h_{0}(x)+\partial
_{f_{1}}\mathcal{F}^{\alpha}_2(0,0,0,\Omega,\gamma_2)h_{1}(x) +\partial
_{f_{2}}\mathcal{F}^{\alpha}_2(0,0,0,\Omega,\gamma_2)h_{2}(x)
\end{pmatrix}%
,
\end{split}
\end{equation*}%
with $f=(f_0,f_{1},f_{2})$. By using \eqref{gateaux}, the Gateaux derivative of the
functional $\mathcal{F}^\alpha_{0}$ at the point $(0,0,0,\Omega^*_\alpha,\gamma^*_2)$ in the direction
$h_{0}$ has the following form
\begin{equation*}
	\partial_{f_0} \mathcal{F}^\alpha_{0}(0,0,\lambda)h_0=C_\alpha\gamma_0\left(1-\frac{\alpha}{2}\right)\fint \frac{h_0( x- y)\sin( y)d y}{\left(4\sin^2(\frac{ y}{2})\right)^{\frac{\alpha}{2}}}-C_\alpha \gamma_0\int\!\!\!\!\!\!\!\!\!\; {}-{} \frac{(h'_0( x)-h'_0( x- y))\cos( y)d y}{\left(4\sin^2(\frac{ y}{2})\right)^{\frac{\alpha}{2}}} .
\end{equation*}
On the other hand, the Gateaux derivative of the functional $\mathcal{F}^\alpha_{j}$ at the point $(0,0,0,\Omega,\gamma_2)$ with direction $h_{j}$ was obtained in %
\eqref{gateaux2}, for $j=1,2$ which is given by
\begin{equation*}
	\partial_{f_j} \mathcal{F}^\alpha_{j}(0,0,\lambda)h_j=C_\alpha\gamma_j\left(1-\frac{\alpha}{2}\right)\int\!\!\!\!\!\!\!\!\!\; {}-{} \frac{h_j( x- y)\sin( y)d y}{\left(4\sin^2(\frac{ y}{2})\right)^{\frac{\alpha}{2}}}-C_\alpha \gamma_j\int\!\!\!\!\!\!\!\!\!\; {}-{} \frac{(h'_j( x)-h'_j( x- y))\cos( y)d y}{\left(4\sin^2(\frac{ y}{2})\right)^{\frac{\alpha}{2}}} .
\end{equation*}
Hence, the differential of the nonlinear functional $\mathcal{F}^\alpha$ around $(0,0,0,\Omega,\gamma_2)$ in the direction $h=(h_0,h_{1},h_{2})\in \mathcal{X}^{N+\alpha-1 }$ has
the following form
\begin{equation*}
D_{f}\mathcal{F}^{\alpha}(0,0,0,\Omega,\gamma_2)h(x)=%
\begin{pmatrix}
{\gamma _{0}}\sum\limits_{n=2}^{\infty }a_{n}^{0}\sigma _{n}n\sin (nx), \\
{\gamma _{1}}\sum\limits_{n=2}^{\infty }a_{n}^{1}\sigma _{n}n\sin (nx),\\
{\gamma _{2}}\sum\limits_{n=2}^{\infty }a_{n}^{2}\sigma _{n}n\sin (nx),
\end{pmatrix}%
,
\end{equation*}%
where $f=(f_0,f_{1},f_{2})$. According to \cite[Proposition 2.4]{Cas1}, $\sigma
_{j}$ can take different expressions depending on the value of $\alpha $.
For $\alpha \in (1,2)$, we have that
\begin{equation*}
\sigma _{n}=2^{\alpha-1 }\frac{\Gamma (1-\alpha )}{\left(\Gamma (1-%
\frac{\alpha }{2})\right)^2}\left( \frac{\Gamma (1+\frac{\alpha }{2})}{\Gamma (2-\frac{%
\alpha }{2})}-\frac{\Gamma (n+\frac{\alpha }{2})}{\Gamma (n+1-\frac{\alpha }{%
2})}\right)  ,
\end{equation*}%
An important fact here is that $\{\sigma _{n}\}$ is increasing with respect
to $j$. Moreover, the asymptotic behavior of $\{\sigma _n\},$ when $n$ is
large, is $\sigma _{n}\sim
n^{\alpha -1}$ if $1<\alpha <2$.
From \eqref{3-7}, one knows that
\begin{equation}\label{fu}
    \begin{cases}
&\mathcal{F}^\alpha_{0}(0,0,0,\lambda)=0,\\
        &
        {\mathcal{F}}_1^{\alpha}(0,0,0,\lambda)=\Omega d_1\sin (x)-\frac{\widehat{C}_\alpha}{2d_1^{\alpha+1}}\Big[\gamma_0+\frac{\gamma_1}{2}S_\alpha +\gamma_2T_\alpha^+( d ,\vartheta)\Big]\sin(x),
\\
&{\mathcal{F}}_2^{\alpha}(0,0,0,\lambda)=\Omega d_2\sin (x) -\frac{\widehat{C}_\alpha}{2d_2^{\alpha+1}}\Big[\gamma_0+\gamma_1 T_\alpha^-( d ,\vartheta)+\frac{\gamma_2}{2}S_\alpha \Big]\sin(x).
    \end{cases}
\end{equation}%
Differentiating \eqref{fu} with respect to the angular speed $\Omega $
around the point $(\Omega^*_\alpha,\gamma^*_2)$ yields
\begin{equation*}
\partial _{\Omega }\mathcal{F}_{0}^{\alpha}(0,0,0,\Omega^*_\alpha,\gamma^*_2)=0,
\end{equation*}%
\begin{equation*}
\partial _{\Omega }\mathcal{F}_{1}^{\alpha}(0,0,0,\Omega^*_\alpha,\gamma^*_2)=d_1\sin(x),
\end{equation*}%
\begin{equation*}
\partial _{\Omega }\mathcal{F}_{2}^{\alpha}(0,0,0,\Omega^*_\alpha,\gamma^*_2)=d_2\sin(x),
\end{equation*}%
Next, differentiating \eqref{fu} with respect to $\gamma_2$ at the point $%
(\Omega^*_\alpha,\gamma^*_2),$ we get
\begin{equation*}
\partial _{\gamma_2}\mathcal{F}_{0}^{\alpha}(0,0,0,\Omega^*_\alpha,\gamma^*_2)=0,
\end{equation*}%
\begin{equation*}
\partial _{\gamma_2}\mathcal{F}_{1}^{\alpha}(0,0,0,\Omega^*_\alpha,\gamma^*_2)=-\frac{\widehat{C}_\alpha}{2d_1^{\alpha+1}}T_\alpha^+( d ,\vartheta)\sin(x),
\end{equation*}%
\begin{equation*}
\partial _{\gamma_2}\mathcal{F}_{1}^{\alpha}(0,0,0,\Omega^*_\alpha,\gamma^*_2)=-\frac{\widehat{C}_\alpha}{4d_2^{\alpha+1}}S_\alpha\sin(x),
\end{equation*}%
Therefore, for all $(\dot\Omega,\dot\gamma_{2})\in \mathbb{R}^{2}$, the differential of the mapping ${\mathcal{F}}^{\alpha}:=(\mathcal{F}_0^{\alpha},{\mathcal{F}}_1^{\alpha},{\mathcal{F}}_2^{\alpha})$ with respect to $\lambda=(\Omega,\gamma_2)$ on the direction $(\dot\Omega,\dot\gamma_2)$ is given by
\begin{equation}\label{jacob-mat}
D_{\lambda}{\mathcal{F}}^{\alpha}(\lambda^*)\begin{pmatrix} \dot\Omega\\ \dot \gamma_2\end{pmatrix} =\begin{pmatrix}
d_1 &  -\frac{\widehat{C}_\alpha }{2}\frac{ T_\alpha^+( d ,\vartheta)}{d_1^{1+\alpha}}\\
 d_2 &  -\frac{\widehat{C}_\alpha }{2} \frac{ S_\alpha}{2d_2^{1+\alpha}}
\end{pmatrix}
\begin{pmatrix} \dot\Omega\\ \dot \gamma_2\end{pmatrix}\sin(x).
\end{equation}

If the Jacobian determinant is non-trivial,
\begin{equation}\label{det-j-polyb}
\det\big(D_\lambda \mathcal F^\alpha(\lambda^*)\big)=
-\frac{\widehat{C}_\alpha d_1}{2d_2^{\alpha+1}}\Big(\frac{ S_\alpha}{2}- d^{\alpha+2} T_\alpha^+( d ,\vartheta)\Big)\neq 0,
\end{equation}
then,  the system \eqref{fu} has a unique solution $\lambda^*=( \Omega^*_\alpha,\gamma_2^*)$  given by
\begin{equation}\label{gamma0-om0}
 \begin{split}
\gamma_2^*&:=\frac{\big( d ^{\alpha+2}-1\big){\gamma_0}+ \big(\frac12S_\alpha  d^{\alpha+2} - T_\alpha^-( d ,\vartheta)\big)\gamma_1}{\frac12S_\alpha- T_\alpha^+( d ,\vartheta) d ^{\alpha+2}},\\
\Omega^*_\alpha&:=\frac{\widehat{C}_\alpha}{2(d_1^{\alpha+2}+d_2^{\alpha+2})}\Big(\gamma_0+\gamma_1\big(\tfrac12 S_\alpha+ T_\alpha^-( d ,\vartheta)\big)+\gamma_2^*\big(T_\alpha^+( d ,\vartheta)+\tfrac12 S_\alpha\big) \Big).
\end{split}
    \end{equation}

Now, we obtain the explicit expression of the inverse of $D_{(f,\lambda)}\mathcal{F}^\alpha(0,0,\lambda^*)$, namely $D_{(f,\lambda)}\mathcal{F}^{\alpha}(0,0,\lambda^*)^{-1}$. Let $h=(h_0,h_1,h_2)\in  \mathcal{X}^{N+\alpha-1}$ and  $(\dot\Omega,
\dot\gamma_2)\in\mathbb{R}^2$, with
$$h_{j}(x)=\sum_{n=1}^{\infty
}a_{n}^j\sin (nx), \quad j=0,1,2,$$
 we have the Gateaux derivative is given by
\begin{equation*}
D_{(f,\lambda)}\mathcal{F}^\alpha(0,0,\lambda^*)\begin{pmatrix}
\dot\Omega\\
\dot\gamma_2\\
h
\end{pmatrix}= \begin{pmatrix}
0  & 0\\
d_1 &  -\frac{\widehat{C}_\alpha }{2}\frac{ T_\alpha^+( d ,\vartheta)}{d_1^{1+\alpha}}\\
 d_2 &  -\frac{\widehat{C}_\alpha }{2} \frac{ S_\alpha}{2d_2^{1+\alpha}}
\end{pmatrix}
\begin{pmatrix} \dot\Omega\\ \dot \gamma_2\end{pmatrix}\,\sin(x)
+\sum_{n=2}^\infty n\sigma_n\begin{pmatrix}
{\gamma_0}\, a_n^0 \\
{\gamma_1}\, a_n^1
\\
\gamma_2^*\, a_n^2
\end{pmatrix}\,\sin(nx).
\end{equation*}
Let $g\in \mathcal{Y}^{N-1}$ satisfying the following expansion
\begin{equation*}
g(x)=\sum_{n=1}^{\infty }%
\begin{pmatrix}
A_n^0  \\
A_n^1\\
A_n^2
\end{pmatrix}%
\sin (nx).
\end{equation*}%
with $A_n^0=0$ if $n$ is not a multiple of $m$,  we can easily obtain the inverse of the linearized operator $%
D_{(f,\lambda)}\mathcal{F}^\alpha(0,0,\lambda^*)g(x)$ which has the following expression
\begin{equation*}
    \begin{split}
     &D_{(f,\lambda)}\mathcal{F}^\alpha(0,0,\lambda^*)^{-1}g(x)=\\
     &
    {\resizebox{.98\hsize}{!}{$ \left(
\frac{1}{\gamma_0}\displaystyle\sum_{n=2}^\infty\frac{A_n^0}{n\sigma_n}\, \cos(nx),
\frac{1}{\gamma_1}\displaystyle\sum_{n=2}^\infty\frac{A_n^1}{n\sigma_n}\,\cos(nx),
\frac{1}{\gamma_2^*}\displaystyle\sum_{n=2}^\infty\frac{A_n^2}{n\sigma_n}\, \cos(nx)\, , \frac{\widehat{C}_\alpha }{2d_2^{1+\alpha}} \frac{ d ^{\alpha+1}T_\alpha^+( d ,\vartheta)A_0^2-\frac12{S_\alpha}A_0^1}{\det\big(D_\lambda \mathcal P^\alpha(\lambda^*)\big)},\frac{A_0^2d_1-A_0^1d_2}{\det\big(D_\lambda \mathcal P^\alpha(\lambda^*)\big)}
\right)   $}}
    \end{split}
\end{equation*}
where $\det\big(D_\lambda \mathcal P^\alpha(\lambda^*)\big)$ was calculated in \eqref{det-j-polyb}.
Now we are going to prove that the linearization $D_{(f,\lambda)}\mathcal{F}^{\alpha}(0,0,\lambda^*)$ is an isomorphism from $ \mathcal{X}^{N+\alpha-1 }\times \mathbb{\ R}
\times \mathbb{\ R}$  to $\mathcal{Y}^{N-1}$. From Proposition \ref{lem2-3}
and \eqref{det-j-poly}, it is obvious that $D_{(f,\lambda)}\mathcal{F}^{\alpha}(0,0,\lambda^*)$ is an isomorphism from $ \mathcal{X}^{N+\alpha-1 }\times \mathbb{\ R}
\times \mathbb{\ R}$  into $\mathcal{Y}^{N-1}$. Hence
only the invertibility needs to be considered. In fact, the restricted
linear operator $D_{(f,\lambda)}\mathcal{F}^{\alpha}(0,0,\lambda^*)$  is invertible if and only if the Jacobian determinant  in \eqref{det-j-poly} is  non-vanishing. Thus the desired result is obtained.
\end{proof}

\section{Existence of nested polygons for gSQG equation}\label{section4}

The goal of this section is to provide a full statement of the  Theorem \ref{thm:informal-1polygon}. In other words, we shall describe the set of solutions of the equation $\mathcal{F}^\alpha(\varepsilon,f,\lambda) = 0$ around the point $(\varepsilon, f,\lambda) = (0,0,\lambda^*)$ by a one-parameter smooth curve $\varepsilon\rightarrow (f(\varepsilon),\lambda(\varepsilon))$ using the implicit function theorem.
\begin{theorem}\label{thm:polygon}
  Let  $b_1,b_2,d_1,d_2\in (0,\infty)$ such that $d=d_2/d_1>0$ satisfies \eqref{det-j-poly}, and let $\gamma_0,\gamma_1\in \R\setminus\{0\}$ such that \eqref{non-deg} holds. Then
\begin{enumerate}[label=\rm(\roman*)]
\item There exists $\varepsilon_0 > 0$ and a neighborhood $\Lambda$ of $\lambda^*$ in $\R^2$ such that  $\mathcal{F}^\alpha$
can be extended to a $C^1$ mapping $(-\varepsilon_0,\varepsilon_0)\times  \mathcal{B}_X\times \Lambda \to \mathcal{Y}^{N-1}$.
\item $\mathcal{F}^\alpha(0,0,\lambda^*)=0,
$
where $\lambda^*=(\Omega^*_\alpha,\gamma_2^*)$ is given by  \eqref{pos} and \eqref{vel}.
\item  The linear operator
$D_{(f,\lambda)}\mathcal{F}^\alpha (0,0,\lambda^*)\colon   \mathcal{X}^{N+\alpha-1}\times\R^2 \to  \mathcal{Y}^{N-1}$ is an isomorphism.
\item There exists $\varepsilon_1>0$ and a unique $C^1$ function $(f,\lambda)\colon (-\varepsilon_1,\varepsilon_1)\to   \mathcal{B}_X\times\R^2$ such that
\begin{equation}\label{gf1f2-poly}
\mathcal{F}^\alpha \big(\varepsilon, f(\varepsilon),\lambda(\varepsilon)\big)=0,
\end{equation}
with $\lambda(\varepsilon)=\lambda^*+o(\varepsilon)$ and
\begin{gather}
f(\varepsilon)=(f_0(\varepsilon),f_1(\varepsilon),f_2(\varepsilon))=\Xi_\alpha\,\Big(0,\frac{\varepsilon b_1\mathcal{H}_1^\alpha}{\gamma_1d_1^{\alpha+2}}\sin(x) ,\frac{\varepsilon b_2\mathcal{H}_2^\alpha}{\gamma_2^* d_2^{\alpha+2}}\sin(x)\Big)+o(\varepsilon),
\notag \\
\label{Qjalpha}
{\resizebox{.94\hsize}{!}{$\mathcal{H}_j^\alpha:=
\gamma_0+ \displaystyle\sum_{\ell=1}^{2} \gamma_\ell\displaystyle\sum_{k=\delta_{\ell j}}^{m-1}\fint \frac{2d_jd_\ell\cos(2k\pi  /m-(\delta_{2\ell}-\delta_{2j})\vartheta\pi /m) -d_j^2-d_\ell^{2}\cos(2(2k\pi  /m-(\delta_{2\ell}-\delta_{2j})) }{(d_\ell^{2}+2d_jd_\ell\cos(2k\pi  /m-(\delta_{2\ell}-\delta_{2j})\vartheta\pi /m)-d_j^{2})^{\frac{\alpha}{2}+2}},$}} \\
\Xi_\alpha:=\frac{(\alpha+2)\Gamma(1-\frac\alpha2)\Gamma(3-\frac\alpha2)}{4\Gamma(2-\alpha)}.
\end{gather}
\item For all  $\varepsilon\in (-\varepsilon_1,\varepsilon_1)\setminus\{0\}$,  the domains $\mathcal{O}_j^\varepsilon$, whose boundaries are given by the conformal parametrizations $R_j^\varepsilon(x)=1+\varepsilon|\varepsilon|^\alpha b_j^{1+\alpha} f_j(x)\colon\mathbb{T}\to \partial \mathcal{O}_j^\varepsilon$,   are
 strictly convex.
\end{enumerate}
\end{theorem}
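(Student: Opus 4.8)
Parts (i)--(iii) are furnished by Section~\ref{section3}. Part~(i) combines Proposition~\ref{p3-1} and Proposition~\ref{lem2-3}: the $|\varepsilon|^{\alpha}$-rescaling in \eqref{conf0} turns every singular contribution, via the Taylor splitting \eqref{taylor}, into an $\varepsilon$-independent leading part plus a remainder carrying a factor $\varepsilon$ (or $\varepsilon|\varepsilon|^{\alpha}$) times a continuous function, and all of these extend across $\varepsilon=0$; continuity of the Gateaux derivatives is obtained the same way, yielding joint $C^1$ regularity of $\mathcal F^\alpha$ on $(-\varepsilon_0,\varepsilon_0)\times\mathcal B_X\times\Lambda$ into $\mathcal Y^{N-1}$. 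Part~(ii) is Proposition~\ref{equivalence} together with Proposition~\ref{critical}: setting $(\varepsilon,f)=(0,0)$ in \eqref{3-7} reduces $\mathcal F^\alpha(0,0,\lambda)=0$ to the affine point-vortex system \eqref{alg-sysP}, whose unique root under \eqref{non-deg} is $\lambda^*$ from \eqref{pos}--\eqref{vel}. Part~(iii) is Proposition~\ref{lem2-4}: the differential $D_{(f,\lambda)}\mathcal F^\alpha(0,0,\lambda^*)$ is the direct sum of a $2\times2$ block acting on the $\sin x$-mode (invertible precisely under \eqref{det-j-poly}, the vanishing first row being harmless since $m\ge2$ forbids $\sin x$ in the first factor of $\mathcal Y^{N-1}$) and the Fourier multiplier $h_j\mapsto\gamma_j\sum_{n\ge2}n\sigma_n a_n^j\sin(nx)$, which is bounded $\mathcal X^{N+\alpha-1}\to\mathcal Y^{N-1}$ because $n\sigma_n\sim n^{\alpha}$ and invertible because $\gamma_j\neq0$ and $\sigma_n\neq0$ for $n\ge2$; its inverse is the one displayed there.

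Granting (i)--(iii), the implicit function theorem applied at $(0,0,\lambda^*)$ yields $\varepsilon_1>0$ and a unique $C^1$ curve $\varepsilon\mapsto(f(\varepsilon),\lambda(\varepsilon))$ with $(f(0),\lambda(0))=(0,\lambda^*)$ and $\|f(\varepsilon)\|=O(\varepsilon)$ (so the curve stays in $\mathcal B_X$), solving \eqref{gf1f2-poly}; this is the existence claim of (iv). For the expansion I would differentiate $\mathcal F^\alpha(\varepsilon,f(\varepsilon),\lambda(\varepsilon))\equiv0$ at $\varepsilon=0$, giving $\bigl(f'(0),\lambda'(0)\bigr)=-\bigl[D_{(f,\lambda)}\mathcal F^\alpha(0,0,\lambda^*)\bigr]^{-1}\partial_\varepsilon\mathcal F^\alpha(0,0,\lambda^*)$, and then compute $\partial_\varepsilon\mathcal F^\alpha(0,0,\lambda^*)$ from the decompositions of Proposition~\ref{p3-1}. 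The key observation is that the self-interaction remainders in \eqref{2-1b}, \eqref{2-8}, \eqref{f02} all carry the factor $\varepsilon|\varepsilon|^{\alpha}$ (so their $\varepsilon$-derivative vanishes at $0$), as do the pieces $\mathcal F^\alpha_{j32},\mathcal F^\alpha_{j33},\mathcal F^\alpha_{j34}$ and their analogues; hence $\partial_\varepsilon\mathcal F^\alpha(0,0,\lambda^*)$ is fed only by $\mathcal F^\alpha_{j31},\mathcal F^\alpha_{j41},\mathcal F^\alpha_{031}$ evaluated at $f=0$. Expanding their kernels $|(z_j(x)+d_j\mathbf e_1)-\nu_{k\ell j}(z_\ell(y)+d_\ell\mathbf e_1)|^{-\alpha}=(A_{k\ell j}+\varepsilon B_{k\ell j}+\varepsilon^2C_{k\ell j})^{-\alpha/2}$ one order past \eqref{fj31}, \eqref{333}, \eqref{f031} (retaining the $\varepsilon^2$-term against the $1/\varepsilon$ prefactor) and carrying out the $y$-integrals, one finds: the $\sin x$-mode cancels identically, so $\lambda'(0)=0$ and $\lambda(\varepsilon)=\lambda^*+o(\varepsilon)$; for $j=0$ the complete rotational sum over each ring kills the contribution, so $f_0'(0)=0$; and for $j=1,2$ only a pure double-angle mode survives, whose coefficient, after division by $2\sigma_2\gamma_j$ and simplification of the $\sigma_2$-value \eqref{gamma} together with the binomial constants from the kernel expansion, collapses to $\Xi_\alpha\,b_j\mathcal H_j^\alpha/(\gamma_j d_j^{\alpha+2})$, with $\mathcal H_j^\alpha,\Xi_\alpha$ as stated.

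For part~(v) I would compute the curvature of the boundary $x\mapsto r(x)(\cos x,\sin x)$ with $r:=R_j^\varepsilon=1+\delta_jf_j(\varepsilon)$ and $\delta_j:=\varepsilon|\varepsilon|^{\alpha}b_j^{1+\alpha}$. The curvature formula for a curve in polar form gives $\kappa=\dfrac{r^2+2(r')^2-rr''}{(r^2+(r')^2)^{3/2}}=1-\delta_j\bigl(f_j(\varepsilon)+f_j''(\varepsilon)\bigr)+O(\delta_j^2)$. Since $N\ge3$, the embedding $\mathcal X^{N+\alpha-1}\hookrightarrow H^3(\mathbb T)\hookrightarrow C^{2}(\mathbb T)$ gives $\|f_j(\varepsilon)+f_j''(\varepsilon)\|_{L^\infty}\le C\|f_j(\varepsilon)\|_{\mathcal X^{N+\alpha-1}}$, which stays bounded (indeed tends to $0$) along the $C^1$ curve from (iv), while $\delta_j\to0$; hence $\kappa\ge1-C|\delta_j|-C\delta_j^2>0$ on $\mathbb T$ after possibly shrinking $\varepsilon_1$. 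A simple closed $C^2$ curve of everywhere-positive curvature bounds a strictly convex domain, so each $\mathcal O_j^\varepsilon$ is strictly convex for $0<|\varepsilon|<\varepsilon_1$.

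The substantive work lies entirely in (iv): identifying which terms feed $\partial_\varepsilon\mathcal F^\alpha(0,0,\lambda^*)$, carrying the interaction-kernel expansions to second order, and summing the resulting trigonometric and Gamma-function constants into $\mathcal H_j^\alpha$ and $\Xi_\alpha$. The two genuinely delicate points --- the vanishing of the $\sin x$-mode (whence $\lambda'(0)=0$) and the collapse of the $j=0$ component --- both rest on the rotational and reflection symmetries already built into $\mathcal X$ and $\mathcal Y$, so that the choice of function spaces in Section~\ref{section2} carries much of the burden. Parts (i)--(iii) and the curvature estimate in (v) are then essentially bookkeeping on top of the results already proved.
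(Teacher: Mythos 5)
Your proposal follows essentially the same route as the paper: (i)--(iii) are exactly the content of Propositions~\ref{p3-1}, \ref{lem2-3}, \ref{equivalence}, \ref{critical} and \ref{lem2-4}; (iv) is the implicit function theorem plus differentiation of \eqref{gf1f2-poly} at $\varepsilon=0$, with the correct identification that only $\mathcal F^\alpha_{031}$, $\mathcal F^\alpha_{j31}$, $\mathcal F^\alpha_{j41}$ feed $\partial_\varepsilon\mathcal F^\alpha(0,0,\lambda^*)$ via a second-order expansion of the interaction kernels (your observation that the surviving contribution is a $\sin(x)\cos(x)$ term and that the $\sin x$-mode cancels, forcing $\lambda'(0)=0$, matches the paper's computation \eqref{f0dif-eps}); and (v) is the same polar curvature formula. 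The only piece you treat more lightly than the paper is the explicit verification in (i) that $\mathcal F^\alpha_j$ is odd for even $f_j$ and that $\mathcal F^\alpha_0$ inherits the $m$-fold symmetry \eqref{m-fold2-poly}, which the paper carries out by direct change of variables rather than delegating it entirely to the choice of function spaces.
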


\begin{proof}

 The regularity of the nonlinear operator $\mathcal{F}^\alpha $ follows from Propositions~\ref{p3-1} and \ref{lem2-3}. In order to prove  the reflection symmetry property we shall assume that the $f_1,f_2$ are even functions,
we need to prove that $\mathcal{F}_j^\alpha (\varepsilon,f,\lambda)$ is an odd function, in other words
\begin{equation}\label{ref-sym24}
\mathcal{F}_j^\alpha (\varepsilon,f,\lambda)(-x)=-\mathcal{F}_j^\alpha (\varepsilon,f,\lambda)(x).
\end{equation}
  Since $f_1, f_2$ are even function, then by using \eqref{fj1} one gets
  $$x\mapsto \Omega \left( |\varepsilon |^{2+\alpha }b_{j}^{2+\alpha
}f_{j}^{\prime }(x)-d_j \left( \frac{\varepsilon|\varepsilon|^{\alpha}b_{j}^{1+\alpha }f_{j}^{\prime }(x)\cos (x)}{1+\varepsilon|\varepsilon|^{\alpha}b_{j}^{1+\alpha }f_{j}(x)}-\sin (x)\right) \right)
  $$
   satisfies \eqref{ref-sym24}. Moreover, using \eqref{2-8}, we can check that the $\mathcal{F}_{j1}^\alpha(\varepsilon, f_j)(x)$  are odd  for every $f_j$ even. Notice that $f_j(x)\in \mathcal{B}_X$ is an even function, and $f'_j(x)$ is odd. By changing $y$ to $-y$ in $\mathcal{F}_{j2}^\alpha(\varepsilon, f_j)$, we can deduce that $\mathcal{F}_{j2}^\alpha(\varepsilon, f_j)$ is odd. Similarly, we can deduce that the remaining terms, namely $\mathcal{F}_{j3}^\alpha(\varepsilon, f_j)$ and $\mathcal{F}_{j4}^\alpha(\varepsilon, f_j)$ are also odd functions.

Thus, it remains to check the $m$-fold  symmetry  property of $\mathcal{F}_0^\alpha $, namely, that if
\begin{equation}\label{m-fold-poly}
{f_0(Q_{\frac{2\pi }{m}}x)}=Q_{\frac{2\pi }{m}} f_0(x), \quad \forall x\in \mathbb{T},
\end{equation}
then
\begin{equation}\label{m-fold2-poly}
\mathcal{F}_0^\alpha (\varepsilon,f,\lambda)(Q_{\frac{2\pi }{m}}{x})=\mathcal{F}_0^\alpha (\varepsilon,f,\lambda)({x}), \quad \forall x\in \mathbb{T}.
\end{equation}
From \eqref{f01}  and \eqref{m-fold-poly} one has
\begin{equation*}
\begin{split}
& \mathcal{F}^\alpha_{01}=\Omega  |\varepsilon |^{2+\alpha }b_{0}^{2+\alpha
}f_{0}^{\prime }(x) ,
\end{split}
\end{equation*}
Same computations to the derivation of  \eqref{2-8}, using  \eqref{m-fold-poly} and the fact the rotation matrix $Q_{\frac{2\pi }{m}}$ preserves length and area one has  one gets
\begin{equation*}
\begin{split}
&{\resizebox{.98\hsize}{!}{$\mathcal{F}^\alpha_{02}(\varepsilon,f_0,\lambda)(Q_{\frac{2\pi }{m}}x)=	C_\alpha\gamma _0\left(1-\frac{\alpha}{2}\right)\displaystyle\fint\frac{f_0(Q_{\frac{2\pi }{m}}(x-y))\sin
(y)dy}{|\sin (\frac{y}{2})|^\alpha}-C_\alpha \gamma _0\int
\!\!\!\!\!\!\!\!\!\;{}-{}\frac{(f_0^{\prime }(Q_{\frac{2\pi }{m}}x)-f_0^{\prime }(Q_{\frac{2\pi }{m}}(x-y)))\cos
(y)dy}{|\sin (\frac{y}{2})|^\alpha}+\varepsilon |\varepsilon |^\alpha\mathcal{R}%
_{02}(\varepsilon ,f_0),$}}\\
&
\qquad\qquad\qquad{\resizebox{.84\hsize}{!}{$= C_\alpha\gamma _0\left(1-\frac{\alpha}{2}\right)\displaystyle\fint\frac{f_0(x-y)\sin
(y)dy}{|\sin (\frac{y}{2})|^\alpha}-C_\alpha \gamma _0\int
\!\!\!\!\!\!\!\!\!\;{}-{}\frac{(f_0^{\prime }(x)-f_0^{\prime }(x-y))\cos
(y)dy}{|\sin (\frac{y}{2})|^\alpha}+\varepsilon |\varepsilon |^\alpha\mathcal{R}%
_{02}(\varepsilon ,f_0),$}}\\
&
\qquad\qquad\qquad =\mathcal{F}^\alpha_{02}(\varepsilon,f_0,\lambda)(x).
\end{split}
\end{equation*}%
and using \eqref{f031}
\begin{equation*}
\begin{split}
\mathcal{F}^\alpha_{03}(\varepsilon,f_0,\lambda)(Q_{\frac{2\pi }{m}}x)=\mathcal{F}^\alpha_{03}(\varepsilon,f_0,\lambda)(x) .
\end{split}
\end{equation*}%
Then, by  \eqref{m-fold-poly}, we also deduce that
\begin{equation*}
\mathcal{F}^\alpha_0(\varepsilon, f_0)(Q_{\frac{2\pi }{m}}x)=\mathcal{F}^\alpha_0(\varepsilon, f_0)(x).
\end{equation*}
Then, \eqref{m-fold2-poly} is satisfied. These concludes the proof of {\rm(i)}.
 The proof of {\rm(ii)} follows immediately from Proposition~\ref{lem2-4}, \eqref{p-gG2} and \eqref{gamma0-om0}.
In order to show {\rm(iii)} we  use  Proposition~\ref{equivalence}  and  \eqref{jacob-mat} to get,
 for all $h=(h_0,h_1,h_2)\in  \mathcal{X}^{N+\alpha-1}$ and  $(\dot\Omega,
\dot\gamma_2)\in\mathbb{R}^2$,
\begin{equation}\label{eq:dev}
D_{(f,\lambda)}\mathcal{F}^\alpha(0,0,\lambda^*)\begin{pmatrix}
\dot\Omega\\
\dot\gamma_2\\
h
\end{pmatrix}= \begin{pmatrix}
0  & 0\\
d_1 & - \frac{\widehat{C}_\alpha }{2}\frac{ T_\alpha^+( d ,\vartheta)}{d_1^{1+\alpha}}\\
 d_2 & - \frac{\widehat{C}_\alpha }{2} \frac{ S_\alpha}{2d_2^{1+\alpha}}
\end{pmatrix}
\begin{pmatrix} \dot\Omega\\ \dot \gamma_2\end{pmatrix}\,\sin(x)
+\sum_{n=2}^\infty n\sigma_n\begin{pmatrix}
{\gamma_0}\, a_n^0 \\
{\gamma_1}\, a_n^1
\\
\gamma_2^*\, a_n^2
\end{pmatrix}\,\sin(nx),
\end{equation}
where $\sigma_n$ is given by \eqref{gamma}.
Proposition~\ref{lem2-4} and the assumption \eqref{det-j-poly} then imply {\rm(iii)}.

\vspace{0.2cm}

The existence and uniqueness in {\rm(iv)} follow from the implicit function theorem. In order to compute the asymptotic behavior of the solution, we   differentiate \eqref{gf1f2-poly} with respect to \( \varepsilon \) at the point \( (0, 0, \lambda^*) \) to obtain the following formula
\begin{equation}\label{comp}
\partial_\varepsilon \big(f(\varepsilon),\lambda(\varepsilon)\big)\big|_{\varepsilon=0}=-D_{(f,\lambda)} \mathcal{F}^\alpha\big(0,0,\lambda^*)^{-1}\partial_\varepsilon \mathcal{F}^\alpha(0,0,\lambda^*).
\end{equation}
For any $g\in \mathcal{Y}^{N-1}$ with the expansion
\begin{equation*}
g(x)=\sum_{n=1}^{\infty }%
\begin{pmatrix}
A_n^0  \\
A_n^1\\
A_n^2
\end{pmatrix}%
\sin (nx).
\end{equation*}%
with $A_n^0=0$ if $n$ is not a multiple of $m$, we have
\begin{equation}\label{eq:inv}
    \begin{split}
     &D_{(f,\lambda)}\mathcal{F}^\alpha(0,0,\lambda^*)^{-1}g(x)=\\
     &
    {\resizebox{.98\hsize}{!}{$ \left(
\frac{1}{\gamma_0}\displaystyle\sum_{n=2}^\infty\frac{A_n^0}{n\sigma_n}\, \cos(nx),
\frac{1}{\gamma_1}\displaystyle\sum_{n=2}^\infty\frac{A_n^1}{n\sigma_n}\,\cos(nx),
\frac{1}{\gamma_2^*}\displaystyle\sum_{n=2}^\infty\frac{A_n^2}{n\sigma_n}\, \cos(nx)\, , \frac{\widehat{C}_\alpha }{2d_2^{1+\alpha}} \frac{ d ^{\alpha+1}T_\alpha^+( d ,\vartheta)A_0^2-\frac12{S_\alpha}A_0^1}{\det\big(D_\lambda \mathcal P^\alpha(\lambda^*)\big)},\frac{A_0^2d_1-A_0^1d_2}{\det\big(D_\lambda \mathcal P^\alpha(\lambda^*)\big)}
\right)   $}}
    \end{split}
\end{equation}
where $\det\big(D_\lambda \mathcal P^\alpha(\lambda^*)\big)$ was calculated in \eqref{det-j-poly}.
On the other hand, from   \eqref{3-7} we have
\begin{equation}\label{eq:f}
 \begin{cases}
{\mathcal{F}}_0^{\alpha}(\varepsilon,0,\lambda)&=\varepsilon\mathcal{R}_{0}(\varepsilon ,f),
\\
{\mathcal{F}}_1^{\alpha}(\varepsilon,0,\lambda)&=\Omega d_1 \sin(x)-\frac{\widehat{C}_\alpha}{2d_1^{\alpha+1}}\Big[\gamma_0+\frac{\gamma_1}{2}S_\alpha +\gamma_2T_\alpha^+( d ,\vartheta)\Big]\sin(x)+\varepsilon\mathcal{R}_{0}(\varepsilon ,f),
\\
{\mathcal{F}}_2^{\alpha}(\varepsilon,0,\lambda)&=\Omega d_2 \sin(x) -\frac{\widehat{C}_\alpha}{2d_2^{\alpha+1}}\Big[\gamma_0+\gamma_1T_\alpha^-( d ,\vartheta)+\frac{\gamma_2}{2}S_\alpha \Big]\sin(x)+\varepsilon\mathcal{R}_{0}(\varepsilon ,f).
\end{cases}
\end{equation}
By combining \eqref{f031}, Taylor formula \eqref{taylor}, \eqref{eq:a2} and \eqref{eq:b2}
\begin{equation*}
    A_{k\ell 0}=d_\ell^2 ,
\end{equation*}
 \begin{equation*}
 \begin{split}
         B_{k\ell 0}&={\resizebox{.88\hsize}{!}{$-2d_\ell\cos\left(2k\pi  /m-\delta_{2\ell}\vartheta\pi /m\right)\left(b_0\cos(x)-b_\ell\cos\left(y-2k\pi  /m+\delta_{2\ell}\vartheta\pi /m\right)\right)$}}\\
         &
        \qquad {\resizebox{.88\hsize}{!}{$+2d_\ell\sin\left(2k\pi  /m-\delta_{2\ell}\vartheta\pi /m\right)\left(b_0\sin(x)-b_\ell\sin\left(y-2k\pi  /m+\delta_{2\ell}\vartheta\pi /m\right)\right)$}} ,
 \end{split}
 \end{equation*}
 one gets
\begin{equation}\label{eq:f03}
\begin{split}
\mathcal{F}^\alpha_{031}&=\displaystyle\sum_{\ell=1}^2 \gamma_\ell \displaystyle\sum_{k=0}^{m-1}\frac{ C_\alpha
}{\varepsilon b_\ell  }\displaystyle\fint
\frac{\sin\left(x-y+2k\pi  /m-\delta_{2\ell}\vartheta\pi /m\right)dy}{(A_{k\ell 0}+\varepsilon B_{k\ell 0})^{\alpha/2}}+\varepsilon\mathcal{R}_{031}(\varepsilon,f_\ell,f_0)\\
&
=\displaystyle\sum_{\ell=1}^2 \gamma_\ell \displaystyle\sum_{k=0}^{m-1}\frac{ C_\alpha
}{\varepsilon b_\ell  }\displaystyle\fint
\frac{\sin\left(x-y+2k\pi  /m-\delta_{2\ell}\vartheta\pi /m\right)dy}{A_{k\ell 0}^{\alpha/2}}\\
&
{\resizebox{.95\hsize}{!}{$\quad -\displaystyle\sum_{\ell=1}^2 \gamma_\ell \displaystyle\sum_{k=0}^{m-1}\frac{ \alpha C_\alpha
}{2 b_\ell  }\displaystyle\fint\int_0^1
\frac{B_{k\ell 0}\sin\left(x-y+2k\pi  /m-\delta_{2\ell}\vartheta\pi /m\right)dt\,dy}{(A_{k\ell 0}+\varepsilon t B_{k\ell 0})^{\frac{\alpha}{2}+1}}+\varepsilon\mathcal{R}_{031}(\varepsilon,f_\ell,f_0)$}}\\
&
= -\frac{ \alpha C_\alpha
}{2   }\displaystyle\sum_{\ell=1}^2 \gamma_\ell \displaystyle\sum_{k=0}^{m-1}\frac{ 1
}{ b_\ell  }\displaystyle\fint
\frac{B_{k\ell 0}\sin\left(x-y+2k\pi  /m-\delta_{2\ell}\vartheta\pi /m\right)dy}{(A_{k\ell 0}+\varepsilon t B_{k\ell 0})^{\frac{\alpha}{2}+1}}+\varepsilon\mathcal{R}_{031}(\varepsilon,f_\ell,f_0)\\
&
={\resizebox{.9\hsize}{!}{$\frac{ \alpha (\alpha+2)C_\alpha
}{4   }\displaystyle\sum_{\ell=1}^2 \gamma_\ell \displaystyle\sum_{k=0}^{m-1}\frac{ 1
}{ b_\ell  }\displaystyle\fint\int_0^1\int_0^1
\frac{\varepsilon B^2_{k\ell 0}\sin\left(x-y+2k\pi  /m-\delta_{2\ell}\vartheta\pi /m\right)ds\,dt\,dy}{(A_{k\ell 0}+\varepsilon st B_{k\ell 0})^{\frac{\alpha}{2}+2}}+\varepsilon\mathcal{R}_{031}(\varepsilon,f_\ell,f_0)$}}\\
&
={\resizebox{.9\hsize}{!}{$\frac{ \alpha (\alpha+2)C_\alpha
}{4   }\displaystyle\sum_{\ell=1}^2 \gamma_\ell \displaystyle\sum_{k=0}^{m-1}\frac{ 1
}{ b_\ell  }\displaystyle\fint
\frac{\varepsilon B^2_{k\ell 0}\sin\left(x-y+2k\pi  /m-\delta_{2\ell}\vartheta\pi /m\right)dy}{A_{k\ell 0}^{\frac{\alpha}{2}+2}}+\varepsilon\mathcal{R}_{031}(\varepsilon,f_\ell,f_0)$}}\\
&
=-\frac{ \alpha (\alpha+2)C_\alpha}{4   }\displaystyle\sum_{\ell=1}^2 \frac{\gamma_\ell b_0}{d_\ell^{\alpha+4}}\displaystyle\sum_{k=0}^{m-1}d_\ell^{2}\cos(2(2k\pi  /m-\delta_{2\ell}) ) \sin(x)\cos(x)+\varepsilon\mathcal{R}_{031}(\varepsilon,f_\ell,f_0) \\
&
=\varepsilon\mathcal{R}_{031}(\varepsilon,f_\ell,f_0) .
\end{split}
\end{equation}
By using \eqref{fj31}, with $$A_{0 0 j}=d_j^2,$$
 \begin{equation*}
   B_{0 0 j}=2d_j\left(b_j\cos(x)-b_0\cos\left(y-\delta_{2j}\vartheta\pi /m\right)\right),
 \end{equation*}
 and then applying Taylor formula \eqref{taylor} one gets
\begin{equation}\label{eq:fj3}
    \begin{split}
     \mathcal{F}^\alpha_{j31}&= \frac{\gamma_{0} C_\alpha
}{\varepsilon b_{0} }\displaystyle\fint
\frac{\sin\left(x-y+\delta_{2j}\vartheta\pi /m\right)dy}{\left(A_{00j}+\varepsilon B_{00j}\right)^{\alpha/2}}+\varepsilon\mathcal{R}_{j31}(\varepsilon,f_0,f_j)\\
&
{\resizebox{.93\hsize}{!}{$ = \frac{\gamma_{0} C_\alpha
}{\varepsilon b_{0} }\displaystyle\fint
\frac{\sin\left(x-y+\delta_{2j}\vartheta\pi /m\right)dy}{A_{00j}^{\alpha/2}}-\frac{\alpha}{2}\frac{\gamma_{0} C_\alpha
}{ b_{0} }\displaystyle\fint \int_0^1
\frac{B_{00j}\sin\left(x-y+\delta_{2j}\vartheta\pi /m\right)dy\, dt}{\left(A_{00j}+t\varepsilon B_{00j}\right)^{\alpha/2+1}}+\varepsilon\mathcal{R}_{j31}(\varepsilon,f_0,f_j)$}}\\
&
=-\frac{\alpha}{2}\frac{\gamma_{0} C_\alpha
}{ b_{0} }\displaystyle\fint
\frac{B_{00j}\sin\left(x-y+\delta_{2j}\vartheta\pi /m\right)dy}{A_{00j}^{\alpha/2+1}}\\
&
\qquad + \frac{\alpha (\alpha+2)C_\alpha \gamma_0}{4b_0} \displaystyle\fint \int_0^1 \int_0^1
\frac{\varepsilon B^2_{00j}\sin\left(x-y+\delta_{2j}\vartheta\pi /m\right)dy\,dt\,ds}{\left(A_{00j}+st\varepsilon B_{00j}\right)^{\alpha/2+2}}+\varepsilon\mathcal{R}_{j31}(\varepsilon,f_0,f_j)\\
&
=-\frac{\alpha}{2}\frac{\gamma_{0} C_\alpha
}{ b_{0} }\displaystyle\fint
\frac{B_{00j}\sin\left(x-y+\delta_{2j}\vartheta\pi /m\right)dy}{A_{00j}^{\alpha/2+1}}\\
&
\qquad + \frac{\alpha (\alpha+2)C_\alpha \gamma_0 }{4b_0} \displaystyle\fint
\frac{\varepsilon B^2_{00j}\sin\left(x-y+\delta_{2j}\vartheta\pi /m\right)dy}{A_{00j}^{\alpha/2+2}}+\varepsilon\mathcal{R}_{j31}(\varepsilon,f_0,f_j)\\
&
{\resizebox{.93\hsize}{!}{$= \frac{\alpha (\alpha+2)C_\alpha \gamma_0 }{4b_0} \displaystyle\fint
\frac{\varepsilon \left(2d_j\left(b_j\cos(x)-b_0\cos\left(y-\delta_{2j}\vartheta\pi /m\right)\right)\right)^2\sin\left(x-y+\delta_{2j}\vartheta\pi /m\right)dy}{d_j^{\alpha+4}}+\varepsilon\mathcal{R}_{j31}(\varepsilon,f_0,f_j)$}}\\
&
={\resizebox{.9\hsize}{!}{$ -\frac{\alpha (\alpha+2)C_\alpha \gamma_0 }{4b_0} \displaystyle\fint
\frac{8\varepsilon d^2_j b_0b_j\cos(x)\cos\left(y-\delta_{2j}\vartheta\pi /m\right)\sin\left(x-y+\delta_{2j}\vartheta\pi /m\right)dy}{d_j^{\alpha+4}}+\varepsilon\mathcal{R}_{j31}(\varepsilon,f_0,f_j)$}}\\
&
= -\frac{\alpha (\alpha+2)C_\alpha \gamma_0}{d_j^{\alpha+2}} \varepsilon b_j\cos(x)\sin(x)+\varepsilon\mathcal{R}_{j31}(\varepsilon,f_0,f_j) .
\end{split}
\end{equation}
Now, we   consider \eqref{333}, for all \( \alpha \in (1, 2) \), we have
\begin{equation}\label{eq:fj4}
\begin{split}
     \mathcal{F}^\alpha_{j41}  &
    =\sum_{\ell=1}^{2}\gamma_\ell\displaystyle\sum_{k=\delta_{\ell j}}^{m-1}\frac{ C_\alpha
}{\varepsilon b_\ell  }  \displaystyle\fint \frac{\sin(x-y+2k\pi  /m-(\delta_{2\ell}-\delta_{2j})\vartheta\pi /m)dy}{\left( A_{k\ell j}+\varepsilon B_{k\ell j}+O(\varepsilon^2) \right)^{\frac{\alpha}{2}}}+\varepsilon\mathcal{R}_{j41}(\varepsilon,f_\ell,f_j)\\
&
=\sum_{\ell=1}^{2}\gamma_\ell\displaystyle\sum_{k=\delta_{\ell j}}^{m-1}\frac{ C_\alpha
}{\varepsilon b_\ell  }  \displaystyle\fint  \frac{\sin(x-y+2k\pi  /m-(\delta_{2\ell}-\delta_{2j})\vartheta\pi /m)dy}{\left( A_{k\ell j} \right)^{\frac{\alpha}{2}}}\\
&
{\resizebox{.94\hsize}{!}{$\qquad-\frac{\alpha}{2}\displaystyle\sum_{\ell=1}^{2} \gamma_\ell\displaystyle\sum_{k=\delta_{\ell j}}^{m-1}\frac{ C_\alpha}{ b_\ell  }  \displaystyle\fint  \frac{B_{k\ell j}\sin(x-y+2k\pi  /m-(\delta_{2\ell}-\delta_{2j})\vartheta\pi /m)dy}{\left( A_{k\ell j}+\varepsilon tB_{k\ell j}+O(\varepsilon^2) \right)^{\frac{\alpha}{2}+1}}+\varepsilon\mathcal{R}_{j41}(\varepsilon,f_\ell,f_j)$}}\\
&
=-\frac{\alpha}{2}\sum_{\ell=1}^{2} \gamma_\ell\displaystyle\sum_{k=\delta_{\ell j}}^{m-1}\frac{ C_\alpha}{ b_\ell  } \displaystyle\fint \frac{B_{k\ell j}\sin(x-y+2k\pi  /m-(\delta_{2\ell}-\delta_{2j})\vartheta\pi /m)dy}{\left( A_{k\ell j}\right)^{\frac{\alpha}{2}+1}}\\
&
 {\resizebox{.94\hsize}{!}{$\qquad+\frac{\alpha (\alpha+2)C_\alpha }{4}\displaystyle\sum_{\ell=1}^{2} \gamma_\ell\displaystyle\sum_{k=\delta_{\ell j}}^{m-1}\frac{1}{ b_\ell  } \fint \frac{\varepsilon B_{k\ell j}^2\sin(x-y+2k\pi  /m-(\delta_{2\ell}-\delta_{2j})\vartheta\pi /m)dy}{\left(A_{k\ell j}\right)^{\frac{\alpha}{2}+2}}+\varepsilon\mathcal{R}_{j41}(\varepsilon,f_\ell,f_j)$}},
\end{split}
\end{equation}
where  $A_{k\ell j}$ and $B_{k\ell j}$ is defined in \eqref{eq:a} and \eqref{eq:b}, respectively.
Thus, we can conclude that
\begin{equation}
\label{f0dif-eps}
 {\resizebox{.96\hsize}{!}{$\partial_\varepsilon \mathcal{F}^\alpha_j(0, 0, \lambda^*)(x) = \alpha(\alpha+2)C_\alpha\displaystyle\sum_{\ell=1}^{2} \gamma_\ell\displaystyle\sum_{k=\delta_{\ell j}}^{m-1}b_j\fint\frac{2d_jd_\ell\cos(2k\pi  /m-(\delta_{2\ell}-\delta_{2j})\vartheta\pi /m) -d_j^2-d_\ell^{2}\cos(2(2k\pi  /m-(\delta_{2\ell}-\delta_{2j}))  }{(d_\ell^{2}+2d_jd_\ell\cos(2k\pi  /m-(\delta_{2\ell}-\delta_{2j})\vartheta\pi /m)-d_j^{2})^{\frac{\alpha}{2}+2}} \sin(x)\cos(x).$}}
\end{equation}
Hence, for all \( \alpha \in (1, 2) \), we conclude that
\[
\partial_\varepsilon \mathcal{F}^\alpha_j(0, 0, \lambda^*) \in \mathcal{Y}_0^{N-1}.
\]
Given that the linear operator
\( D_{f} \mathcal{F}^\alpha(0, 0, \lambda^*) \colon \mathcal{X}^{N+\alpha-1} \to \mathcal{Y}_0^{N-1} \) is an isomorphism and, according to the hypothesis, the kernel of the Jacobian operator $D_{\lambda}{\mathcal{F}}^{1}(\lambda)$ is non-trivial, then we can combine \eqref{eq:dev}, \eqref{comp}, \eqref{f0dif-eps}, and Proposition~\ref{lem2-4} to deduce that
\[
\partial_\varepsilon \lambda(\varepsilon) \big|_{\varepsilon=0} = 0 ,
\]
and
\begin{equation*}
{\resizebox{.98\hsize}{!}{$    \partial_\varepsilon f_{j}(\varepsilon) \big|_{\varepsilon=0}(x) = \frac{\alpha(\alpha+2) C_\alpha}{\sigma_2 \gamma_j} \displaystyle\sum_{\ell=1}^{2} \gamma_\ell\displaystyle\sum_{k=\delta_{\ell j}}^{m-1}b_\ell\fint \frac{2d_jd_\ell\cos(2k\pi  /m-(\delta_{2\ell}-\delta_{2j})\vartheta\pi /m) -d_j^2-d_\ell^{2}\cos(2(2k\pi  /m-(\delta_{2\ell}-\delta_{2j})) }{(d_\ell^{2}+2d_jd_\ell\cos(2k\pi  /m-(\delta_{2\ell}-\delta_{2j})\vartheta\pi /m)-d_j^{2})^{\frac{\alpha}{2}+2}}\sin(x).$}}
\end{equation*}
Finally, simple calculations using \eqref{gamma} yields
\begin{equation}
\label{gam1hatg}
\frac{\alpha C_\alpha}{\sigma_2} = \frac{\Gamma\left(1 - \frac{\alpha}{2}\right) \Gamma\left(3 - \frac{\alpha}{2}\right)}{\Gamma(2 - \alpha)}.
\end{equation}
 Combining the two last identities with \eqref{comp}, \eqref{eq:inv} and \eqref{gam1hatg} yields
\begin{equation*}
\partial_\varepsilon \big( f(\varepsilon),\lambda(\varepsilon)\big)\big|_{\varepsilon=0}=\Xi_\alpha\Big(0,\frac{b_1\mathcal{H}_1^0}{\gamma_1d_1^{\alpha+2}}\sin(x),\frac{b_2\mathcal{H}_2^0}{\gamma_2^* d_2^{\alpha+2}}\sin(x)\, ,0,0 \Big).
\end{equation*}
Thus, we conclude  the proof of {\rm (iv)}.

The convexity in {\rm (v)} is an straightforward computation. To this end, we need to verify that the set of solutions \( R_{j}(x) \) parameterizes convex patches. Hence, it suffices to compute the signed curvature of the interface of the patch \( z_{j}(x) = (z_{j}^{1}(x), z_{j}^{2}(x)) = R_j(x)(\cos(x), \sin(x)) \) at the point \( x \) as follows
\begin{equation*}
\begin{split}
\kappa _{j}(x)&
=\frac{(1+\varepsilon ^{1+\alpha}b_{j}^{1+\alpha}f_{j}(x))^{2}+2\varepsilon
^{2+2\alpha}b_{j}^{2+2\alpha}(f_{j}^{\prime }(x))^{2}-\varepsilon ^{1+\alpha}b_{j}^{1+\alpha}f_{j}^{\prime
\prime }(x)(1+\varepsilon ^{1+\alpha}b_{j}^{1+\alpha}f_{j}(x))}{\left( (1+\varepsilon
^{1+\alpha}b_{j}^{1+\alpha}f_{j}(x))^{2}+\varepsilon ^{2+2\alpha}b_{j}^{2+2\alpha}(f_{j}^{\prime
}(x))^{2}\right) ^{\frac{3}{2}}}\\
&
=\frac{1+O(\varepsilon )}{1+O(\varepsilon )}%
>0,
\end{split}%
\end{equation*}%
for $\varepsilon $ small and each $x\in \lbrack 0,2\pi )$. The quantity
obtained is non-negative if $\varepsilon\in (-\varepsilon_1, \varepsilon_1) $. Then, the signed
curvature is strictly positive and we obtain the desired result. Hence the
proof of Theorem \ref{thm:polygon} is completed.
\end{proof}

\section{Existence of nested polygons for SQG equation}\label{section5}

In this section we investigate the existence of a finite number of vortex patches for the surface quasi-geostrophic (SQG) equation  (\(\alpha=1\)). More precisely, we apply the implicit function theorem to construct time-periodic solutions to the SQG equation. Hence,  the next proposition gives  the continuity of $\mathcal{F}^1$. This result is equivalent
to Proposition \ref{p3-1}.

\begin{proposition}\label{p1b}
	There exists $\varepsilon_0>0$ and a small neighborhood $\Lambda$ of $\lambda^*$  such that the functional $\mathcal{F}^1$ can be extended from $\left(-\varepsilon_0, \varepsilon_0\right) \times \mathcal{B}_X\times \Lambda$ to $ \mathcal{Y}^{N-1}$ as  a continuous functional.
\end{proposition}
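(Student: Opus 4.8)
The plan is to follow the proof of Proposition~\ref{p3-1} essentially verbatim, specialising to $\alpha=1$ and replacing the Sobolev-type space $X^{N+\alpha-1}_1$ by the logarithmic space $X^{N+\log}_1$ at every point where the borderline singularity $|\sin(\tfrac{y}{2})|^{-1}$ of the SQG kernel is felt. First I would use the decompositions $\mathcal{F}^1_0=\mathcal{F}^1_{01}+\mathcal{F}^1_{02}+\mathcal{F}^1_{03}$ and $\mathcal{F}^1_j=\mathcal{F}^1_{j1}+\mathcal{F}^1_{j2}+\mathcal{F}^1_{j3}+\mathcal{F}^1_{j4}$ coming from \eqref{f01}--\eqref{2-4b} with $\alpha=1$. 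The pieces $\mathcal{F}^1_{01}$ and $\mathcal{F}^1_{j1}$ carry, after extracting $\Omega d_j\sin(x)$, an explicit positive power of $\varepsilon$, hence extend continuously to $\varepsilon\le 0$ and vanish at $\varepsilon=0$; the interaction pieces $\mathcal{F}^1_{03},\mathcal{F}^1_{j3},\mathcal{F}^1_{j4}$ have kernels whose denominators $|(z_j(x)+(d_j,0))-\nu_{k\ell j}(z_\ell(y)+(d_\ell,0))|$ admit strictly positive lower bounds once $\varepsilon_0$ is small and $\Lambda$ is bounded, the $2m+1$ patches being pairwise disjoint, so that after the Taylor expansion in $\varepsilon$ carried out in \eqref{333}, \eqref{fj31} and \eqref{f031} they are smooth, in particular continuous.

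The heart of the argument is the self-interaction term $\mathcal{F}^1_{j2}$, and analogously $\mathcal{F}^1_{02}$. I would apply the Taylor formula \eqref{taylor} with $\alpha=1$ exactly as in the derivation of \eqref{2-6} and \eqref{2-8}: the apparently singular $\varepsilon^{-1}$ contribution in $\mathcal{F}^1_{j21}$ vanishes by parity of the integrand, and \eqref{2-8} specialises to
\begin{equation*}
\mathcal{F}^1_{j2}=\frac{C_1\gamma_j}{2}\fint\frac{f_j(x-y)\sin(y)\,dy}{|\sin(\tfrac{y}{2})|}-C_1\gamma_j\fint\frac{(f_j'(x)-f_j'(x-y))\cos(y)\,dy}{|\sin(\tfrac{y}{2})|}+\varepsilon|\varepsilon|\,\mathcal{R}_{j2}(\varepsilon,f_j),
\end{equation*}
with the coefficient $1-\tfrac{\alpha}{2}$ now equal to $\tfrac12$. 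Differentiating $N-1$ times, the most singular contribution is precisely $\fint\frac{\partial^Nf_j(x)-\partial^Nf_j(x-y)}{|\sin(\tfrac{y}{2})|}\,dy$, whose $L^2$-norm is finite by the very definition of $X^{N+\log}_1$; the remaining terms are either of the form $\fint\frac{\partial^{N-1}f_j(x)-\partial^{N-1}f_j(x-y)}{|\sin(\tfrac{y}{2})|}\,dy$, controlled by $\|f_j\|_{H^N}$ via the embedding $X^{N+\log}_1\hookrightarrow X^N$ and a standard bound for the log-singular integral operator on $H^{N-1}$, or carry explicit powers of $\varepsilon$, treated exactly as $\mathcal{F}^\alpha_{j23}$ and $\mathcal{F}^\alpha_{j24}$ in Proposition~\ref{p3-1}; this yields $\mathcal{F}^1_{j2}\in\mathcal{Y}^{N-1}_0$. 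For the continuity I would reproduce the difference estimate \eqref{2-7} with $D_1(f_j)=\varepsilon^4 b_j^4\,\Delta f_j^2+4(1+\varepsilon|\varepsilon|b_j^2 f_j)(1+\varepsilon|\varepsilon|b_j^2\tilde f_j)\sin^2(\tfrac{x-y}{2})$: the mean value theorem writes $D_1(f_{j1})^{-1/2}-D_1(f_{j2})^{-1/2}$ as a quotient whose numerator is linear in $f_{j1}-f_{j2}$ and its derivatives and whose denominator is comparable to $|\sin(\tfrac{x-y}{2})|$, so that together with the $X^{N+\log}_1$ seminorm one gets $\|\mathcal{F}^1_{j2}(\varepsilon,f_{j1})-\mathcal{F}^1_{j2}(\varepsilon,f_{j2})\|_{Y^{N-1}}\le C\|f_{j1}-f_{j2}\|_{X^{N+\log}_1}$; the same computation handles $\mathcal{F}^1_{02}$, and the interaction terms are continuous by the lower bounds already noted. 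Assembling the three components gives the continuous extension of $\mathcal{F}^1=(\mathcal{F}^1_0,\mathcal{F}^1_1,\mathcal{F}^1_2)$ to $(-\varepsilon_0,\varepsilon_0)\times\mathcal{B}_X\times\Lambda\to\mathcal{Y}^{N-1}$, with limiting values at $\varepsilon=0$ given by the $\alpha=1$ version of \eqref{3-7}, where $\widehat C_1$, $S_1$ and $T_1^\pm$ replace $\widehat C_\alpha$, $S_\alpha$ and $T_\alpha^\pm$.

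The step I expect to be the main obstacle is the critical scaling at $\alpha=1$: every estimate in Proposition~\ref{p3-1} resting on the full $X^{N+\alpha-1}$ norm --- in particular those using fractional-derivative bounds for $\int\frac{\partial^Nf(x-y)-\partial^Nf(x)}{|\sin(\tfrac{y}{2})|^\alpha}\,dy$, valid only for $\alpha>1$ --- has to be revisited so that it closes with the strictly weaker logarithmic seminorm, which at $\alpha=1$ is exactly the integral operator appearing in the definition of $X^{N+\log}_1$. One must also check that the Taylor-remainder terms, where differentiating the factor $B$ produces an apparent extra $|x-y|^{-1}$, remain integrable thanks to the $\sin^2(\tfrac{x-y}{2})$ already present in $B$; this cancellation is precisely what makes $X^{N+\log}_1$, rather than any Hölder- or Sobolev-type space, the natural setting for the SQG self-interaction.
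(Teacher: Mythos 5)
Your proposal follows essentially the same route as the paper's own proof of Proposition~\ref{p1b}, which likewise specialises the argument of Proposition~\ref{p3-1} to $\alpha=1$, isolates $\mathcal{F}^1_{j22}$ as the critical term, bounds $\partial^{N-1}\mathcal{F}^1_{j22}$ in $L^2$ by the $X^{N+\log}$ seminorm via the mean value theorem and H\"older, and reuses the difference estimate \eqref{2-7} for continuity, arriving at the limiting expression \eqref{2-21}. The only slip is a harmless bookkeeping one: writing the denominator as $|\sin(\tfrac{y}{2})|$ rather than $(4\sin^2(\tfrac{y}{2}))^{1/2}=2|\sin(\tfrac{y}{2})|$ costs a factor of $2$, so the leading coefficient should be $\tfrac{C_1\gamma_j}{4}$ as in \eqref{2-21}, which does not affect the continuity argument.
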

\begin{proof}
Similarly to the case $\alpha =1$, it is straightforward to see that  $\mathcal{F}^1_{j1}:\left(-\varepsilon_0, \varepsilon_0\right) \times \mathcal{B}_X\times \Lambda \rightarrow \mathcal{Y}^{N-1}$ is continuous and
\begin{equation}
 \mathcal{F}^1_{j1}=\Omega d_j\sin (x)+\varepsilon \abs{\varepsilon}\mathcal{R}_{j1}(\varepsilon,f_j)
\label{2-1s}
\end{equation}
where $\mathcal{R}_{j1}(\varepsilon ,f_j)$ is continuous. Since $f_{j}\in
\mathcal{B}_{X}$, by a simple change of variables $y$ to $-y$, it is easy to see that $%
\mathcal{F}^1_{j2}(\varepsilon ,f_j)$ is odd. By the application of the Taylor
formula \eqref{taylor} in $\mathcal{F}^1_{j21}$, we can treat with the possible
singularities at $\varepsilon =0$. As was noticed in the proof of Proposition \ref%
{p3-1}, the singular term is $\mathcal{F}^1_{j22}$. Hence, we need to compute the $%
\partial ^{N-1}$ derivatives of $\mathcal{F}^1_{j22}$
\begin{equation*}
\begin{split}
& {\resizebox{.98\hsize}{!}{$
\partial^{N-1}\mathcal{F}^1_{j22}=\gamma_{j}\displaystyle\fint
{}-{}\frac{(\partial^Nf_{j}(y)-\partial^Nf_{j}(x))\cos(x-y)dy}{\left(
|\varepsilon|^{4}b_{j}^{4}\left(f_{j}(x)-f_{j}(y)\right)^2+4(1+%
\varepsilon|\varepsilon| b^{2}_{j}
f_{j}(x))(1+\varepsilon|\varepsilon| b^{2}_{j}
f_{j}(y))\sin^2\left(\frac{x-y}{2}\right)\right)^{\frac{1}{2}}}$}} \\
&
\ \ -{\resizebox{.94\hsize}{!}{$\gamma_{j}\varepsilon|\varepsilon|
b_{j}^{2}\displaystyle\fint \frac{\cos(x-y)}{\left(
|\varepsilon|^{4}b_{j}^{4}\left(f_{j}(x)-f_{j}(y)\right)^2+4(1+%
\varepsilon|\varepsilon| b^{2}_{j}
f_{j}(x))(1+\varepsilon|\varepsilon| b^{2}_{j}
f_{j}(y))\sin^2\left(\frac{x-y}{2}\right)\right)^{\frac{3}{2}}}$}} \\
& \ \ \ {\resizebox{.98\hsize}{!}{$\times
\left(\varepsilon|\varepsilon|
b^{2}_{j}(f_{j}(x)-f_{j}(y))(f'_{j}(x)-f'_{j}(y))+2((1+\varepsilon|%
\varepsilon|
b_{j}^{2}f_{j}(x))f'_{j}(y)+(1+\varepsilon|\varepsilon|
b_{j}^{2}f_{j}(y))f'_{j}(x))\sin^2(\frac{x-y}{2})\right)$}} \\
& \ \ \ \times (\partial ^{N-1}f_{j}(y)-\partial ^{N-1}f_{j}(x))dy+l.o.t.
\end{split}%
\end{equation*}%
By Sobolev embedding, we know that $\Vert \partial ^{M}f_{j}\Vert
_{L^{\infty }}\leq C\Vert f_{j}\Vert _{X^{N+\log}}<\infty $ for $%
M=0,1,2 $ and $N\geq 3$. Therefore, by combining the mean value theorem
together with H\"{o}lder inequality, one gets
\begin{equation*}
\begin{split}
\left\Vert \partial ^{N-1}\mathcal{F}^1_{j22}\right\Vert _{L^{2}}& \leq C\left\Vert \int
\!\!\!\!\!\!\!\!\!\;{}-{}\frac{\partial ^Nf_{j}(x)-\partial ^Nf_{j}(y)}{%
|\sin (\frac{x-y}{2})|}dy\right\Vert _{L^{2}}+C\left\Vert \int
\!\!\!\!\!\!\!\!\!\;{}-{}\frac{\partial ^{N-1}f_{j}(x)-\partial
^{N-1}f_{j}(y)}{|\sin (\frac{x-y}{2})|}dy\right\Vert _{L^{2}} \\
& \leq C\Vert f_{j}\Vert _{X^{N+\log}}+C\Vert f_{j}\Vert _{X^{N+\log
-1}}<\infty .
\end{split}%
\end{equation*}%
Hence, we deduce that the range of $\mathcal{F}^1_{j2}$ belongs to $\mathcal{Y}^{N-1}$.

By similar computations  obtained in Proposition \ref{p3-1},
specifically in \eqref{2-7}, one only need to obtain the continuity of the
most singular term $\mathcal{F}^1_{j22}$. In other words, for $f_{j1},f_{j2}\in B_{r}$,
we have $\Vert \mathcal{F}^1_{j22}(\varepsilon ,f_{j1})- \mathcal{F}^1_{j22}(\varepsilon
,f_{j2})\Vert _{\mathcal{Y}^{N-1}}\leq C\Vert f_{j1}-f_{j2}\Vert _{X^{N+\log}}$.
In fact, the continuity of the nonlinear functional $\mathcal{F}^1_{j2}$ can be treated in the same manner as in the proof of Lemma \ref%
{p3-1}. Finally, by invoking the Taylor formula \eqref{taylor}, we obtain the
following expression of $\mathcal{F}^1$ which will be used later
\begin{equation}\label{2-21}
    \begin{cases}
&{\resizebox{.93\hsize}{!}{$\mathcal{F}^1_{0}(\lambda)=\frac{\gamma_{0}}{4}\displaystyle\fint   \frac{f_0( x- y)\sin( y)d y}{|\sin (\frac{y}{2})|}-\frac{\gamma_0}{2}\displaystyle\fint \frac{(f'_0( x)-f'_0( x- y))\cos( y)d y}{|\sin (\frac{y}{2})|}+\varepsilon\mathcal{R}_{0}(\varepsilon ,f)$}}\\
        &
        {\mathcal{F}}_1^{1}(\lambda)=\Omega d_1\sin (x)-\frac{1}{2d_1^{2}}\Big[\gamma_0+\frac{\gamma_1}{2}S_1 +\gamma_2T_1^+( d ,\vartheta)\Big]\sin(x)\\
        &
    {\resizebox{.93\hsize}{!}{$   \qquad \qquad + \frac{\gamma_{1}}{4}\displaystyle\fint\frac{f_1(x-y)\sin
(y)dy}{|\sin (\frac{y}{2})|}-\frac{\gamma_{1}}{2}\displaystyle\fint
\frac{(f_1^{\prime }(x)-f_1^{\prime }(x-y))\cos
(y)dy}{|\sin (\frac{y}{2})|}+\varepsilon\mathcal{R}_{1}(\varepsilon ,f)$}}
\\
&{\mathcal{F}}_2^{1}(\lambda)=\Omega d_2\sin (x) -\frac{1}{2d_2^{2}}\Big[\gamma_0+\gamma_1 T_1^-( d ,\vartheta)+\frac{\gamma_2}{2}S_1 \Big]\sin(x)\\
&
{\resizebox{.93\hsize}{!}{$\qquad \qquad +\frac{\gamma_{2}}{4}\displaystyle\fint\frac{f_2(x-y)\sin
(y)dy}{|\sin (\frac{y}{2})|}-\frac{\gamma_{2}}{2}\displaystyle\fint
\frac{(f_2^{\prime }(x)-f_2^{\prime }(x-y))\cos
(y)dy}{|\sin (\frac{y}{2})|}+\varepsilon\mathcal{R}_{2}(\varepsilon ,f).$}}
    \end{cases}
\end{equation}%
where $\mathcal{R}_{j}$, for $j=0,1,2$, are bounded and smooth
and $T_1^-( d ,\vartheta)$ and $S_1$ are defined by
\begin{equation}\label{Sb}
  \frac{S_1}{2}:=  \frac12\sum_{k=1}^{m-1}{ \Big(2\sin\big({\frac{k\pi  }{m}}\big)  \Big)^{-1}},
\end{equation}
\begin{equation}\label{tb}
   {T_1^\pm( d ,\vartheta)}:= \sum_{k=0}^{m-1}\frac{1 -  d ^{\pm 1} \cos\big({\frac{(2k\pm\vartheta)\pi }{m}}\big)}{\big(1+( d ^{\pm 1})^2 - 2 d ^{\pm 1} \cos\big({\frac{(2k\pm\vartheta)\pi }{m}}\big) \big)^{\frac{3}{2}}}
\end{equation}
 Hence, the
proof is completed.
\end{proof}

Next, we obtain the linearization of the functional $\mathcal{F}^1$ and
show that in fact this functional is a $C^{1}$ function.
\begin{proposition}\label{lem2-3b}
	There exists $\varepsilon_0>0$ and a small neighborhood $\Lambda$ of $\lambda^*$  such that  the Gateaux derivatives \( \partial_{f_0} \mathcal{F}^1_0(\varepsilon, f, \lambda) h_0 : (-\varepsilon_0, \varepsilon_0) \times \mathcal{B}_X \times \Lambda \rightarrow \mathcal{Y}^{N-1} \), \( \partial_{f_j} \mathcal{F}^1_j(\varepsilon, f, \lambda) h_j : (-\varepsilon_0, \varepsilon_0) \times \mathcal{B}_X \times \Lambda \rightarrow \mathcal{Y}^{N-1} \) and  \( \partial_{f_\ell} \mathcal{F}^1_j(\varepsilon, f, \lambda) h_\ell : (-\varepsilon_0, \varepsilon_0) \times \mathcal{B}_X \times \Lambda \rightarrow \mathcal{Y}^{N-1} \)  exist and are continuous.
\end{proposition}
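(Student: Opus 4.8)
The strategy mirrors the proof of Proposition~\ref{lem2-3} almost verbatim, with $\alpha$ specialised to $1$; the only structural change is that the kernel now sits exactly at the borderline of integrability, so all singular integrals must be estimated in the logarithmically corrected space $X_1^{N+\log}$ in place of $X^{N+\alpha-1}$. First I would observe that, as in \eqref{2-1s} and the $\alpha=1$ analogues of \eqref{2-15}, \eqref{deriv_fi3}, \eqref{deriv_fi4}, \eqref{deriv_fj} and \eqref{deriv_fj2}, the terms $\mathcal{F}^1_{j1}$, $\mathcal{F}^1_{j3}$, $\mathcal{F}^1_{j4}$ (and likewise $\mathcal{F}^1_{01}$, $\mathcal{F}^1_{03}$), together with all mixed Gateaux derivatives $\partial_{f_0}\mathcal{F}^1_j$, $\partial_{f_\ell}\mathcal{F}^1_j$ and $\partial_{f_\ell}\mathcal{F}^1_0$, carry an explicit prefactor of order $\varepsilon|\varepsilon|$; their derivatives are therefore continuous as an immediate consequence of the continuity bounds already established in Proposition~\ref{p1b}. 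This reduces the whole statement to proving the existence and continuity of $\partial_{f_j}\mathcal{F}^1_{j2}$ (and, identically, of $\partial_{f_0}\mathcal{F}^1_{02}$), and inside $\mathcal{F}^1_{j2}=\mathcal{F}^1_{j21}+\mathcal{F}^1_{j22}+\mathcal{F}^1_{j23}+\mathcal{F}^1_{j24}$ the only genuinely delicate piece is $\mathcal{F}^1_{j22}$, the cosine integral with kernel $D_1(f_j)^{-1/2}$.

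Next I would write the candidate derivative $\partial_{f_j}\mathcal{F}^1_{j2}h_j=\partial_{f_j}\mathcal{F}^1_{j21}+\partial_{f_j}\mathcal{F}^1_{j22}+\partial_{f_j}\mathcal{F}^1_{j23}+\partial_{f_j}\mathcal{F}^1_{j24}$ by formally differentiating the integrands, i.e.\ the $\alpha=1$ specialisations of \eqref{2-11}--\eqref{2-14}, and then justify that this is indeed the Gateaux derivative by showing
\[
\lim_{t\to0}\Big\|\tfrac1t\big(\mathcal{F}^1_{j22}(\varepsilon,f_j+th_j)-\mathcal{F}^1_{j22}(\varepsilon,f_j)\big)-\partial_{f_j}\mathcal{F}^1_{j22}(\varepsilon,f_j,h_j)\Big\|_{Y^{N-1}}=0 ,
\]
splitting the left-hand side into $\mathcal{G}^1_{j21}+\mathcal{G}^1_{j22}$ exactly as in Proposition~\ref{lem2-3}. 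The analytic heart is the second-order mean-value expansion
\[
\frac{1}{D_1(f_j+th_j)^{1/2}}-\frac{1}{D_1(f_j)^{1/2}}+t\,\frac{\Delta f_j\,\Delta h_j+2(\tilde R_j h_j+\tilde h_j R_j)\sin^2(\tfrac{x-y}{2})}{D_1(f_j)^{1/2}}\sim\frac{C\,t^2}{|\sin(\tfrac{x-y}{2})|^{1/2}}\,\varphi(\varepsilon,f_j,h_j),\qquad \|\varphi\|_{L^\infty}<\infty ,
\]
which, together with $D_1(g)\sim\sin^2(\tfrac{x-y}{2})$, reduces $\partial^{N-1}\mathcal{G}^1_{j21}$ to a lower-order remainder plus a convolution of $\partial^N f_j(x)-\partial^N f_j(y)$ against $|\sin(\tfrac{x-y}{2})|^{-1/2}$, hence $\|\mathcal{G}^1_{j21}\|_{\mathcal{Y}^{N-1}}\le Ct\,\|f_j\|_{X^{N+\log}}$; the term $\mathcal{G}^1_{j22}$ is controlled by $Ct\,\|f_j\|_{X^{N+\log}}$ along the lines of \eqref{2-7}. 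Letting $t\to0$ yields existence of the derivative, and the continuity of $\partial_{f_j}\mathcal{F}^1_{j22}$ in $(\varepsilon,f,\lambda)$ follows from the same mean-value decomposition \eqref{2-7} specialised to $\alpha=1$; the strictly less singular pieces $\partial_{f_j}\mathcal{F}^1_{j21}$, $\partial_{f_j}\mathcal{F}^1_{j23}$, $\partial_{f_j}\mathcal{F}^1_{j24}$ are handled in the same way, with room to spare. Assembling these estimates with the $\varepsilon|\varepsilon|$-bounds for all remaining terms gives the claimed $C^1$ regularity.

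The main obstacle will be precisely the borderline behaviour of the kernel at $\alpha=1$: the factor $|\sin(\tfrac{x-y}{2})|^{-1}$ that appears after taking $\partial^{N-1}$ of $\mathcal{F}^1_{j22}$ is no longer integrable, so one cannot close the estimate in $H^N$ and must instead land in $\mathcal{Y}^{N-1}$ with norm controlled by the logarithmic seminorm $\big\|\int_0^{2\pi}\tfrac{\partial^N f_j(x-y)-\partial^N f_j(x)}{|\sin(y/2)|}\,dy\big\|_{L^2}$ built into $X_1^{N+\log}$. The care needed throughout is to verify that every top-order term exhibits exactly this Calder\'on--commutator structure and that each remainder produced by Taylor's formula \eqref{taylor} again lies in $X^{N+\log}$ uniformly in the parameters; this works because every $x$-differentiation of the kernel $D_1^{-1/2}$ raises the denominator exponent by one half but simultaneously generates a compensating factor $\sin^2(\tfrac{x-y}{2})$ in the numerator, so the net singularity never exceeds $|\sin(\tfrac{x-y}{2})|^{-1}$, which the $\log$-norm is designed to absorb. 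Once this is in place, the Gateaux derivatives in the statement---for $\mathcal{F}^1_0$ exactly as for $\mathcal{F}^1_{j2}$, and for the mixed derivatives directly from their $\varepsilon|\varepsilon|$ prefactors---are continuous, which completes the argument.
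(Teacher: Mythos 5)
Your proposal is correct and follows essentially the same route as the paper, which for Proposition~\ref{lem2-3b} simply defers to the argument of Proposition~\ref{lem2-3} specialised to $\alpha=1$; you have filled in exactly the details that reference implies (the decomposition into $\mathcal{F}^1_{j2l}$, the Gateaux limit via $\mathcal{G}^1_{j21}+\mathcal{G}^1_{j22}$, the second-order mean-value expansion, and the $O(\varepsilon)$ prefactors killing the remaining and mixed derivatives), together with the correct observation that the borderline kernel forces the top-order terms to be closed using the logarithmic seminorm of $X_1^{N+\log}$ as in Proposition~\ref{p1b}. No gap.
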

\begin{proof}
    The continuity of those functionals are obtained in the same
way as in Proposition \ref{lem2-3} by employing similar arguments as in the case $%
\alpha \in(1,2)$. For this reason, the proof is left to the reader.
\end{proof}
In fact, proceeding as above, we can derive the following Gateaux
derivatives of the functional $\mathcal{F}^1$ as was done in \eqref{2-110}-\eqref{2-14} by  setting $\varepsilon = 0$ and $f_j \equiv 0$, we obtain
\begin{equation}\label{gateauxb}
	\partial_{f_0} \mathcal{F}^1_{0}(0,0,\lambda)h_0=\frac{\gamma_0}{4}\displaystyle\fint \frac{h_0( x- y)\sin( y)d y}{\abs{\sin(\frac{ y}{2})}}-\frac{\gamma_0}{2}\displaystyle\fint\frac{(h'_0( x)-h'_0( x- y))\cos( y)d y}{\abs{\sin(\frac{ y}{2})}} .
\end{equation}
\begin{equation}\label{gateaux2b}
	\partial_{f_j} \mathcal{F}^1_{j}(0,0,\lambda)h_j=\frac{\gamma_j}{4}\displaystyle\fint \frac{h_j( x- y)\sin( y)d y}{\abs{\sin(\frac{ y}{2})}}-\frac{\gamma_j}{2}\displaystyle\fint \frac{(h'_j( x)-h'_j( x- y))\cos( y)d y}{\abs{\sin(\frac{ y}{2})}} .
\end{equation}
\begin{equation}\label{gateaux1bb}
	\partial_{f_j} \mathcal{F}^1_{0}(0,0,\lambda)h_j=0 .
\end{equation}
\begin{equation}\label{gateaux3b}
	\partial_{f_\ell} \mathcal{F}^1_{0}(0,0,\lambda)h_\ell=0 .
\end{equation}
\begin{equation}\label{gateaux4b}
	\partial_{f_0} \mathcal{F}^1_{j}(0,0,\lambda)h_0=0 .
\end{equation}
and
\begin{equation}\label{gateaux5b}
	\partial_{f_\ell} \mathcal{F}^1_{j}(0,0,\lambda)h_\ell=0 .
\end{equation}
  Having established the $C^1$ regularity of the functional.  We then define the following nonlinear operator
\begin{equation*}
\mathcal{F}^1(\varepsilon,f,\lambda):=\big(\mathcal{F}^1_0(\varepsilon,f,\lambda),\mathcal{F}^1_1(\varepsilon,f,\lambda),\mathcal{F}^1_2(\varepsilon,f,\lambda)\big).
\end{equation*}
Now we are in position to show the property below about point vortices.
\begin{proposition}\label{equivalenceb}
  The equation $ \mathcal{F}^1(0, 0, \lambda)=0$  can be reduced to a system of two real equation
  \begin{equation}\label{solb}
{\resizebox{.97\hsize}{!}{$ \Omega d_j-\frac{1}{2d_j^{2}}\left({\gamma_0}+{\gamma_j}\displaystyle\sum_{k=1}^{m-1}{ \Big(2\sin\big({\frac{k\pi  }{m}}\big)  \Big)^{-1}}+\gamma_{3-j}\displaystyle\sum_{k=0}^{m-1}\frac{1 -  \frac{d_{3-j}}{d_j} \cos\big({\frac{(2k\pm\vartheta)\pi }{m}}\big)}{\left(1+\left( \frac{d_{3-j}}{d_j} \right)^2 - 2\frac{d_{3-j}}{d_j} \cos\left({\frac{(2k\pm\vartheta)\pi }{m}}\right) \right)^{\frac{3}{2}}}\right)=0, \, j=1,2.$}}
\end{equation}
\end{proposition}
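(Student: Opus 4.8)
The plan is to argue exactly as in the proof of Proposition~\ref{equivalence}, now using the expansion of $\mathcal{F}^1$ recorded in \eqref{2-21} -- which was established in Proposition~\ref{p1b} -- in place of the general formula \eqref{3-7}. The point is that, just as in the regime $\alpha\in(1,2)$, all of the structure of $\mathcal{F}^1$ that is not linear in $f$, or that carries a factor of $\varepsilon$, disappears when we specialize to $\varepsilon=0$ and $f\equiv 0$.

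First I would substitute $\varepsilon=0$ and $f=(f_0,f_1,f_2)\equiv 0$ into \eqref{2-21}. Every remainder term $\varepsilon\mathcal{R}_j(\varepsilon,f)$ then vanishes, and the leading integral contributions are linear in $f_j$ and so vanish as well; hence the first component collapses to $\mathcal{F}^1_0(0,0,\lambda)\equiv 0$, while the other two reduce to
\begin{equation*}
\mathcal{F}^1_j(0,0,\lambda)=\Big(\Omega d_j-\tfrac{1}{2d_j^{2}}\big[\gamma_0+\tfrac{\gamma_j}{2}S_1+\gamma_{3-j}T_1^{\pm}(d,\vartheta)\big]\Big)\sin(x),\qquad j=1,2,
\end{equation*}
with $T_1^{+}$ appearing for $j=1$ and $T_1^{-}$ for $j=2$. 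I would also record at this stage that $\widehat{C}_1=\frac{2\,\Gamma(3/2)}{\Gamma(1/2)}=1$, so that the prefactor $\widehat{C}_\alpha/2$ present in the $\alpha$-case specializes precisely to $1/2$, consistently with \eqref{2-21}.

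Since $\sin(x)\not\equiv 0$, it follows that $\mathcal{F}^1(0,0,\lambda)=0$ if and only if the scalar coefficient of $\sin(x)$ in each of $\mathcal{F}^1_1$ and $\mathcal{F}^1_2$ vanishes. Unfolding the abbreviations \eqref{Sb} for $S_1$ and \eqref{tb} for $T_1^{\pm}(d,\vartheta)$ -- recalling that $d=d_2/d_1$, so that $d$ plays the role of $d_{3-1}/d_1$ when $j=1$ and $d^{-1}$ that of $d_{3-2}/d_2$ when $j=2$ -- the vanishing of these two coefficients is literally the pair of equations \eqref{solb}; the converse implication is the same computation read backwards. As in Proposition~\ref{equivalence}, everything here is a routine specialization of the $\alpha\in(1,2)$ argument, so there is no genuine obstacle; the only step requiring any care is the bookkeeping of the constants, namely checking $\widehat{C}_1=1$ and matching the explicit trigonometric sums in \eqref{solb} against the notation $S_1$, $T_1^{\pm}$ fixed in \eqref{Sb}--\eqref{tb}.
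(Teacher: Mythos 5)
Your proposal is correct and follows essentially the same route as the paper: the paper's proof simply invokes the expansion \eqref{2-21} evaluated at $\varepsilon=0$, $f\equiv 0$ (carried out explicitly in \eqref{p-gG2b}), which is precisely the specialization you perform, including the check that $\widehat{C}_1=1$ and the identification of the trigonometric sums with $S_1$ and $T_1^{\pm}$ from \eqref{Sb}--\eqref{tb}.
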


\begin{proof}
    This result follows directly from the expansions of the functional in \eqref{2-21}, together with the definition of the function $\mathcal{P}_j^1(\lambda)$ provided in \eqref{alg-sysP}.
\end{proof}

\begin{proposition}
\label{criticalb} The system  \eqref{3-7} has a unique solution $\lambda^*=(\Omega^*_1,\gamma_2^*)$ given by
\begin{equation}
\gamma_2^*:=\frac{\big( d ^{3}-1\big){\gamma_0}+ \big(\frac{1}{2}S_1  d^{3} - T_1^-( d ,\vartheta)\big)\gamma_1}{\frac12S_1- T_1^+( d ,\vartheta) d ^{3}}  \label{posb}
\end{equation}%
with angular velocity
\begin{equation}
\Omega^{\ast }_1:=\frac{1}{2(d_1^{3}+d_2^{3})}\Big(\gamma_0+\gamma_1\big(\tfrac12 S_1+ T_1^-( d ,\vartheta)\big)+\gamma_2^*\big(T_1^+( d ,\vartheta)+\tfrac12 S_1\big) \Big),  \label{velb}
\end{equation}
where $S_1$ and $T^\pm_1$ are defined in \eqref{Sb} and \eqref{tb}, respectively and $d=\frac{d_2}{d_1}>0$ .
\end{proposition}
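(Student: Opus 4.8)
The plan is to follow verbatim the strategy used for Proposition~\ref{critical}, of which Proposition~\ref{criticalb} is simply the $\alpha=1$ specialization. First I would evaluate the expansion \eqref{2-21} of $\mathcal{F}^1$ (established in Proposition~\ref{p1b}) at the base point $\varepsilon=0$, $f=(f_0,f_1,f_2)\equiv 0$. Every integral term on the right-hand side of \eqref{2-21} is linear in the profiles $f_j$ and every remainder $\varepsilon\mathcal{R}_j$ carries an explicit factor $\varepsilon$, so all of them vanish, leaving $\mathcal{F}^1_0(0,0,\lambda)\equiv 0$ (which imposes no condition on $\lambda$) together with
\[
\mathcal{F}^1_1(0,0,\lambda)=\Big(\Omega d_1-\tfrac{1}{2d_1^{2}}\big[\gamma_0+\tfrac{\gamma_1}{2}S_1+\gamma_2\,T_1^+(d,\vartheta)\big]\Big)\sin(x),
\]
\[
\mathcal{F}^1_2(0,0,\lambda)=\Big(\Omega d_2-\tfrac{1}{2d_2^{2}}\big[\gamma_0+\gamma_1\,T_1^-(d,\vartheta)+\tfrac{\gamma_2}{2}S_1\big]\Big)\sin(x),
\]
where $S_1$, $T_1^\pm$ are as in \eqref{Sb}--\eqref{tb} and I have used $\widehat{C}_1=1$, obtained by evaluating \eqref{eqn:kalpha2} at $\alpha=1$. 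Thus $\mathcal{F}^1(0,0,\lambda)=0$ is equivalent to requiring that the two coefficients of $\sin(x)$ vanish, i.e.\ to the linear system \eqref{solb} of Proposition~\ref{equivalenceb} (the $\alpha=1$ analogue of \eqref{f1b}--\eqref{f2b}).

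Second, I would solve this $2\times 2$ system in the unknowns $(\Omega,\gamma_2)$. Since it is affine-linear, uniqueness and solvability is equivalent to the non-vanishing of its coefficient determinant, which (with $d=d_2/d_1$ and $d_2^{3}=d^{3}d_1^{3}$) is precisely $\det\big(D_\lambda\mathcal{F}^1(\lambda^*)\big)=-\tfrac{d_1}{2d_2^{2}}\big(\tfrac{S_1}{2}-d^{3}T_1^+(d,\vartheta)\big)\neq 0$, the $\alpha=1$ case of \eqref{det-j-poly}. To read off the formulas I would eliminate $\Omega$: multiplying the $j=1$ equation by $d^{3}$ and subtracting the $j=2$ equation removes $\Omega$ and gives
\[
\gamma_2\Big(d^{3}T_1^+(d,\vartheta)-\tfrac12 S_1\Big)=-\big(d^{3}-1\big)\gamma_0-\Big(\tfrac12 S_1 d^{3}-T_1^-(d,\vartheta)\Big)\gamma_1,
\]
which is solvable for $\gamma_2$ exactly under the determinant condition and rearranges to $\gamma_2^*$ in the form \eqref{posb}. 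Back-substituting $\gamma_2^*$ into the $j=2$ equation (equivalently, adding the two equations after clearing denominators and using $d_2^{\alpha+2}=d_2^{3}$) then yields $\Omega^*_1$ in the form \eqref{velb}.

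Finally I would record the non-degeneracy bookkeeping: the requirement $\gamma_2^*\neq 0$ is precisely the $\alpha=1$ analogue of \eqref{non-deg}, namely $(d^{3}-1)\gamma_0+\big(\tfrac12 S_1 d^{3}-T_1^-(d,\vartheta)\big)\gamma_1\neq 0$, i.e.\ the non-vanishing of the numerator of \eqref{posb}. I do not expect a genuine obstacle here: once the functional expansion \eqref{2-21} is in hand, the argument is elementary linear algebra. The only delicate point — a matter of bookkeeping rather than a real difficulty — is the consistent handling of the $\pm$ and $\vartheta\in\{0,1\}$ conventions in $T_1^\pm(d,\vartheta)$ (aligned versus staggered configurations) and of the $\alpha=1$ values of the universal constants, so that the final answer matches \eqref{posb}--\eqref{velb} exactly.
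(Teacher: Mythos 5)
Your proposal follows essentially the same route as the paper's own proof: evaluate the expansion \eqref{2-21} at $\varepsilon=0$, $f\equiv 0$ to reduce $\mathcal{F}^1(0,0,\lambda)=0$ to the two scalar equations \eqref{f1bb}--\eqref{f2bb}, solve the resulting affine-linear $2\times 2$ system for $\gamma_2$ by eliminating $\Omega$, and back-substitute to obtain $\Omega^*_1$. You are in fact slightly more explicit than the paper (which only says ``after straightforward computations'') about the elimination step and the role of the determinant condition, so the proposal is correct and matches the paper's argument.
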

\begin{proof}
By using the system \eqref{2-21} evaluating in $\varepsilon=0$, $f_j=0$ gives
  \begin{equation}\label{p-gG2b}
 \begin{cases}
{\mathcal{F}}_0^{1}(0,0,\Omega ,\lambda)&=0,
\\
{\mathcal{F}}_1^{1}(0,0,\Omega ,\lambda)&=\Omega d_1 \sin(x)-\frac{1}{2d_1^{2}}\Big[\gamma_0+\frac{\gamma_1}{2}S_1 +\gamma_2T_1^+( d ,\vartheta)\Big]\sin(x),
\\
{\mathcal{F}}_2^{1}(0,0,\Omega ,\lambda)&=\Omega d_2 \sin(x) -\frac{1}{2d_2^{2}}\Big[\gamma_0+\gamma_1T_1^-( d ,\vartheta)+\frac{\gamma_2}{2}S_1 \Big]\sin(x).
\end{cases}
\end{equation}
Therefore $\mathcal{F}^1(0,0,\Omega ,\lambda)=0$, if and only if, we have the
following two equalities
\begin{equation}
{\mathcal{F}}_1^{1}(0,0,\Omega ,\lambda)=\Omega d_1 -\frac{1}{2d_1^{2}}\Big[\gamma_0+\frac{\gamma_1}{2}S_1 +\gamma_2T_1^+( d ,\vartheta)\Big]=0  \label{f1bb}
\end{equation}%
and
\begin{equation}
{\mathcal{F}}_2^{1}(0,0,\Omega ,\lambda)=\Omega d_2 -\frac{1}{2d_2^{2}}\Big[\gamma_0+\gamma_1T_1^-( d ,\vartheta)+\frac{\gamma_2}{2}S_1 \Big]=0.  \label{f2bb}
\end{equation}%
After straightforward computations using \eqref{f1bb} one gets
\begin{equation*}
\gamma_2 ^{\ast }:=\frac{\big( d ^{3}-1\big){\gamma_0}+ \big(\frac12S_1  d^{3} - T_1^-( d ,\vartheta)\big)\gamma_1}{\frac12S_1- T_1^+( d ,\vartheta) d ^{3}}.
\end{equation*}%
Now, by replacing $\gamma_2^{\ast }$ into \eqref{f2bb}, we obtain
\begin{equation*}
\Omega^*_1=\frac{1}{2(d_1^{3}+d_2^{3})}\Big(\gamma_0+\gamma_1\big(\tfrac12 S_1+ T_1^-( d ,\vartheta)\big)+\gamma_2^*\big(T_1^+( d ,\vartheta)+\tfrac12 S_1\big) \Big).
\end{equation*}
\end{proof}

Once again in order to  ensure that $\gamma_2$ is non-vanishing, one has to assume that $\gamma_0$ and $\gamma_1$ verify the condition
 \begin{equation}\label{non-degb}
( d ^{3}-1){\gamma_0}+ \big(\tfrac12S_1  d ^{3} - T_1^-( d ,\vartheta)\big)\gamma_1\neq 0.
    \end{equation}

Now, we are in position to prove that $\mathcal{F}^1(\varepsilon,f,\Omega,\gamma_2)$
at the point $(0,0,\Omega^*_1,\gamma_2^*)$ is an isomorphism.
\begin{proposition}
\label{lem2-4b} Let $h=(h_0,h_1,h_2)\in  \mathcal{X}^{N+\log}$ and  $(\dot\Omega,
\dot\gamma_2)\in\mathbb{R}^2$, with $h_{j}(x)=\sum_{n=1}^{\infty
}a_{n}^j\sin (nx)$, $j=0,1,2$, we have the Gateaux derivative is given by
\begin{equation*}
D_{(f,\lambda)}\mathcal{F}^1(0,0,\lambda^*)\begin{pmatrix}
\dot\Omega\\
\dot\gamma_2\\
h
\end{pmatrix}= \begin{pmatrix}
0  & 0\\
d_1 & - \frac{1 }{2}\frac{ T_1^+( d ,\vartheta)}{d_1^{2}}\\
 d_2 &  -\frac{1}{2} \frac{ S_1}{2d_2^{2}}
\end{pmatrix}
\begin{pmatrix} \dot\Omega\\ \dot \gamma_2\end{pmatrix}\,\sin(x)
+\sum_{n=2}^\infty n\sigma_n\begin{pmatrix}
{\gamma_0}\, a_n^0 \\
{\gamma_1}\, a_n^1
\\
\gamma_2^*\, a_n^2
\end{pmatrix}\,\sin(nx),
\end{equation*}
where $\sigma_n$ is given by
\begin{equation}\label{gammab}
\sigma _{n}:=
\frac{2}{\pi}\sum\limits_{i=1}^{n}\frac{1}{2i-1}  ,
\end{equation}
with
\begin{equation*}
g\triangleq (f_{1},f_{2},\Omega ,\gamma_2)\quad \text{and}\quad g_{0}\triangleq
(0,0,\Omega ^{\ast },\gamma_2^{\ast }),
\end{equation*}
Moreover, the linearized operator $D_{g}\mathcal{F}^1(0,g_{0}): \mathcal{X}^{N+\log}\times \mathbb{R}\times \mathbb{%
\ R}\rightarrow \mathcal{Y}^{N-1}$ is an isomorphism if the determinant of the Jacobian satisfies
\begin{equation}\label{det-j-poly:sqg}
\det\big(D_\lambda \mathcal F^1(\lambda)\big)=
-\frac{ d_1}{2d_2^{3}}\Big(\frac{ S_1}{2}- d^{3} T_1^+( d ,\vartheta)\Big)\neq 0 .
\end{equation}
\end{proposition}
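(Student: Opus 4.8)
The plan is to repeat the argument of Proposition~\ref{lem2-4} essentially verbatim, the only structural change being that the eigenvalue sequence $\{\sigma_n\}$ is replaced by its $\alpha=1$ value. First I would assemble the Gateaux derivative $D_{(f,\lambda)}\mathcal{F}^1(0,0,\lambda^*)$. In the $f$-direction the relevant formulas are already at hand: by \eqref{gateaux1bb}--\eqref{gateaux5b} every cross term $\partial_{f_0}\mathcal{F}^1_j$ and $\partial_{f_\ell}\mathcal{F}^1_j$ with $\ell\neq j$ vanishes at $(0,0,\lambda^*)$, while by \eqref{gateauxb}--\eqref{gateaux2b} each diagonal block acts as the singular integral operator
\begin{equation*}
h_j\longmapsto \frac{\gamma_j}{4}\fint\frac{h_j(x-y)\sin(y)\,dy}{\abs{\sin(\tfrac{y}{2})}}-\frac{\gamma_j}{2}\fint\frac{\big(h_j'(x)-h_j'(x-y)\big)\cos(y)\,dy}{\abs{\sin(\tfrac{y}{2})}}.
\end{equation*}
The analytic input I would import is the spectral identity --- \cite[Proposition~2.4]{Cas1} specialized to $\alpha=1$, or equivalently an explicit Fourier computation with the kernel $1/\abs{\sin(\tfrac{y}{2})}$ --- namely that this operator is diagonal on the Fourier side, sending the $n$-th harmonic to $\gamma_j\, n\sigma_n$ times itself with $\sigma_n=\frac{2}{\pi}\sum_{i=1}^{n}\frac{1}{2i-1}$. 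This produces the Fourier-multiplier term in the stated formula.

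Next I would differentiate the reduced expressions for $\mathcal{F}^1_0,\mathcal{F}^1_1,\mathcal{F}^1_2$ in \eqref{p-gG2b} with respect to $\Omega$ and $\gamma_2$ at $\lambda^*=(\Omega^*_1,\gamma_2^*)$. Since $\mathcal{F}^1_0$ vanishes identically when $\varepsilon=f=0$, its $\lambda$-derivatives are zero; for $j=1,2$ one reads off $\partial_\Omega\mathcal{F}^1_j=d_j\sin(x)$ together with $\partial_{\gamma_2}\mathcal{F}^1_1=-\tfrac{1}{2d_1^2}T_1^+(d,\vartheta)\sin(x)$ and $\partial_{\gamma_2}\mathcal{F}^1_2=-\tfrac{S_1}{4d_2^2}\sin(x)$. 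Collecting the $f$-part and the $\lambda$-part yields the asserted block decomposition: a rank-at-most-two contribution supported on the $\sin(x)$ mode, encoded by the $3\times2$ matrix acting on $(\dot\Omega,\dot\gamma_2)$, plus the diagonal Fourier multiplier $\sum_{n\geq2}n\sigma_n\,\mathrm{diag}(\gamma_0,\gamma_1,\gamma_2^*)$ on the higher harmonics.

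For the isomorphism I would treat the two blocks separately, exactly as in the case $\alpha\in(1,2)$. On the orthogonal complement of $\mathrm{span}\{\sin x\}$ --- i.e.\ as a map $\mathcal{X}^{N+\log}\to\mathcal{Y}_0^{N-1}$ --- the operator is the Fourier multiplier whose $n$-th coefficient in the $j$-th component is $\gamma_j\, n\sigma_n$. I would note that $\gamma_0,\gamma_1\neq0$ by hypothesis and $\gamma_2^*\neq0$ by the non-degeneracy condition \eqref{non-degb}, that $\sigma_n>0$ is strictly increasing, and that $n\sigma_n$ grows like $n\log n$ --- the logarithmic factor being exactly why the natural domain is $\mathcal{X}^{N+\log}$ rather than $\mathcal{X}^{N}$, matching the extra integral term in the definition of $X_1^{N+\log}$. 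With the mapping properties of this kernel taken from \cite{Cas1}, the multiplier is a bounded bijection onto $\mathcal{Y}_0^{N-1}$. On the $\sin(x)$ mode the operator is the linear map $(\dot\Omega,\dot\gamma_2)\mapsto D_\lambda\mathcal{F}^1(\lambda^*)(\dot\Omega,\dot\gamma_2)$, whose determinant is $-\tfrac{d_1}{2d_2^{3}}\big(\tfrac{S_1}{2}-d^{3}T_1^+(d,\vartheta)\big)$, hence invertible precisely under \eqref{det-j-poly:sqg}. Reassembling the blocks gives the isomorphism, and inverting each block produces the explicit expression for $D_{(f,\lambda)}\mathcal{F}^1(0,0,\lambda^*)^{-1}$, verbatim as before.

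I expect the main obstacle to be the spectral and mapping-property input rather than the algebra: establishing that the singular integral operator with kernel $1/\abs{\sin(\tfrac{y}{2})}$ diagonalizes with eigenvalues $n\sigma_n$ and is bounded and boundedly invertible between the logarithmically corrected spaces $X_1^{N+\log}$ and $Y^{N-1}$ (the $\alpha=1$ analogue of the $H^N\to H^{N+\alpha-1}$ statement for $\alpha\in(1,2)$). Once the $\alpha=1$ form of $\sigma_n$ and its asymptotics are in place, the evaluation of $\partial_\Omega\mathcal{F}^1_j$ and $\partial_{\gamma_2}\mathcal{F}^1_j$, the $2\times2$ determinant, and the inversion are all routine and can be lifted directly from Proposition~\ref{lem2-4}.
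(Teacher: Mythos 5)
Your proposal is correct and follows essentially the same route as the paper: the diagonal Fourier-multiplier form of the $f$-derivative with $\sigma_n=\frac{2}{\pi}\sum_{i=1}^n\frac{1}{2i-1}$ imported from \cite[Proposition 2.4]{Cas1}, the explicit $\Omega$- and $\gamma_2$-derivatives of the reduced system \eqref{p-gG2b}, and invertibility read off from the $2\times 2$ determinant on the $\sin(x)$ mode together with positivity and $\sigma_n\sim\ln n$ growth of the multiplier on the higher harmonics. Your explicit separation into the $\mathrm{span}\{\sin x\}$ block and its complement is only a slightly more structured presentation of what the paper does when it writes down the inverse operator.
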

\begin{proof}
The proof is similar to Proposition \ref{lem2-4}. In fact, by using \eqref{gateauxb} and \eqref{gateaux2b}, the differential of the nonlinear functional $\mathcal{F}^1$ around $(0,0,0,\Omega,\gamma_2)$ in the direction $h=(h_0,h_{1},h_{2})\in \mathcal{X}^{N+\log }$ has
the following form
\begin{equation*}
D_{f}\mathcal{F}^1(0,0,0,\Omega,\gamma_2)h(x)=%
\begin{pmatrix}
{\gamma _{0}}\sum\limits_{j=2}^{\infty }a_{j}^{0}\sigma _{j}j\sin (jx), \\
{\gamma _{1}}\sum\limits_{j=2}^{\infty }a_{j}^{1}\sigma _{j}j\sin (jx),\\
{\gamma _{2}}\sum\limits_{j=2}^{\infty }a_{j}^{2}\sigma _{j}j\sin (jx),
\end{pmatrix}%
,
\end{equation*}%
where $f=(f_0,f_{1},f_{2})$. According to \cite[Proposition 2.4]{Cas1},
\begin{equation*}
\sigma _{n}=\frac{2}{\pi }\sum\limits_{i=1}^{n}\frac{1}{2i-1}.
\end{equation*}%
An important fact here is that $\{\sigma _{n}\}$ is increasing with respect
to $n$ and its  asymptotic behavior is $\sigma _{n}\sim \ln (n)$ if $\alpha =1$.
From \eqref{2-21}, one knows that
\begin{equation}\label{fub}
    \begin{cases}
&\mathcal{F}^1_{0}(0,0,0,\lambda)=0,\\
        &
        {\mathcal{F}}_1^1(0,0,0,\lambda)=\Omega d_1\sin (x)-\frac{1}{2d_1^{2}}\Big[\gamma_0+\frac{\gamma_1}{2}S_1 +\gamma_2 T_1^+( d ,\vartheta)\Big]\sin(x),
\\
&{\mathcal{F}}_2^1(0,0,0,\lambda)=\Omega d_2\sin (x) -\frac{1}{2d_2^{2}}\Big[\gamma_0+\gamma_1 T_1^-( d ,\vartheta)+\frac{\gamma_2}{2}S_1 \Big]\sin(x).
    \end{cases}
\end{equation}%
Differentiating \eqref{fub} with respect to  $\Omega $
around the point $(\Omega^*_1,\gamma_2^*)$ yields
\begin{equation*}
\partial _{\Omega }\mathcal{F}_{0}^1(0,0,0,\Omega^*_1,\gamma_2^*)=0,
\end{equation*}%
\begin{equation*}
\partial _{\Omega }\mathcal{F}_{1}^1(0,0,0,\Omega^*_1,\gamma_2^*)=d_1\sin(x),
\end{equation*}%
\begin{equation*}
\partial _{\Omega }\mathcal{F}_{2}^1(0,0,0,\Omega^*_1,\gamma_2^*)=d_2\sin(x),
\end{equation*}%
Next, differentiating \eqref{fub} with respect to $\gamma_2$ at the point $%
(\Omega^*_1,\gamma_2^*),$ we get
\begin{equation*}
\partial _{\gamma_2}\mathcal{F}_{0}^1(0,0,0,\Omega^*_1,\gamma_2^*)=0,
\end{equation*}%
\begin{equation*}
\partial _{\gamma_2}\mathcal{F}_{1}^1(0,0,0,\Omega^*_1,\gamma_2^*)=-\frac{1}{2d_1^{2}}T_1^+( d ,\vartheta)\sin(x),
\end{equation*}%
\begin{equation*}
\partial _{\gamma_2}\mathcal{F}_{1}^1(0,0,0,\Omega^*_1,\gamma_2^*)=-\frac{1}{4d_2^{2}}S_1\sin(x),
\end{equation*}%
Therefore, for all $(\dot\Omega,\dot\gamma_{2})\in \mathbb{R}^{2}$, the differential of the mapping ${\mathcal{F}}^1:=(\mathcal{F}_0^1,{\mathcal{F}}_1^1,{\mathcal{F}}_2^1)$ with respect to $\lambda=(\Omega,\gamma_2)$ on the direction $(\dot\Omega,\dot\gamma_2)$ is given by
\begin{equation*}
D_{\lambda}{\mathcal{F}}^1(\lambda^*)\begin{pmatrix} \dot\Omega\\ \dot \gamma_2\end{pmatrix} =\begin{pmatrix}
d_1 &  -\frac{1 }{2}\frac{ T_\alpha^+( d ,\vartheta)}{d_1^{2}}\\
 d_2 &  -\frac{1}{2} \frac{ S_\alpha}{2d_2^{2}}
\end{pmatrix}
\begin{pmatrix} \dot\Omega\\ \dot \gamma_2\end{pmatrix}\sin(x).
\end{equation*}


Now, we obtain the explicit expression of the inverse of $D_{(f,\lambda)}\mathcal{F}^1(0,0,\lambda^*)$, namely $D_{(f,\lambda)}\mathcal{F}^1(0,0,\lambda^*)^{-1}$. Let $h=(h_0,h_1,h_2)\in  \mathcal{X}^{N+\log}$ and  $(\dot\Omega,
\dot\gamma_2)\in\mathbb{R}^2$, with
$$h_{j}(x)=\sum_{n=1}^{\infty
}a_{n}^j\sin (nx), \quad j=0,1,2,$$
 we have the Gateaux derivative is given by
\begin{equation*}
D_{(f,\lambda)}\mathcal{F}^1(0,0,\lambda^*)\begin{pmatrix}
\dot\Omega\\
\dot\gamma_2\\
h
\end{pmatrix}= \begin{pmatrix}
0  & 0\\
d_1 &  -\frac{1}{2}\frac{ T_1^+( d ,\vartheta)}{d_1^{2}}\\
 d_2 & - \frac{1 }{2} \frac{ S_1}{2d_2^{2}}
\end{pmatrix}
\begin{pmatrix} \dot\Omega\\ \dot \gamma_2\end{pmatrix}\,\sin(x)
+\sum_{n=2}^\infty n\sigma_n\begin{pmatrix}
{\gamma_0}\, a_n^0 \\
{\gamma_1}\, a_n^1
\\
\gamma_2^*\, a_n^2
\end{pmatrix}\,\sin(nx).
\end{equation*}
Let $g\in \mathcal{Y}^{N-1}$ satisfying the following expansion
\begin{equation*}
g(x)=\sum_{n=1}^{\infty }%
\begin{pmatrix}
A_n^0  \\
A_n^1\\
A_n^2
\end{pmatrix}%
\sin (nx).
\end{equation*}%
with $A_n^0=0$ if $n$ is not a multiple of $m$,  we can easily obtain the inverse of the linearized operator $%
D_{(f,\lambda)}\mathcal{F}^1(0,0,\lambda^*)g(x)$ which has the following expression
\begin{equation*}
    \begin{split}
     &D_{(f,\lambda)}\mathcal{F}^1(0,0,\lambda^*)^{-1}g(x)=\\
     &
    {\resizebox{.98\hsize}{!}{$ \left(
\frac{1}{\gamma_0}\displaystyle\sum_{n=2}^\infty\frac{A_n^0}{n\sigma_n}\, \cos(nx),
\frac{1}{\gamma_1}\displaystyle\sum_{n=2}^\infty\frac{A_n^1}{n\sigma_n}\,\cos(nx),
\frac{1}{\gamma_2^*}\displaystyle\sum_{n=2}^\infty\frac{A_n^2}{n\sigma_n}\, \cos(nx)\, , \frac{1 }{2d_2^{2}} \frac{ d ^{2}T_1^+( d ,\vartheta)A_0^2-\frac12{S_1}A_0^1}{\det\big(D_\lambda \mathcal P^1(\lambda^*)\big)},\frac{A_0^2d_1-A_0^1d_2}{\det\big(D_\lambda \mathcal P^1(\lambda^*)\big)}
\right)   $}}
    \end{split}
\end{equation*}
where $\det\big(D_\lambda \mathcal P^1(\lambda^*)\big)$ was calculated in \eqref{det-j-poly}.
Now we are going to prove that the linearization $D_{(f,\lambda)}\mathcal{F}^1(0,0,\lambda^*)$ is an isomorphism from $ \mathcal{X}^{N+\log}\times \mathbb{\ R}
\times \mathbb{\ R}$  to $\mathcal{Y}^{N-1}$. From Proposition \ref{lem2-3b}
and \eqref{det-j-poly:sqg}, it is obvious that $D_{(f,\lambda)}\mathcal{F}^1(0,0,\lambda^*)$ is an isomorphism from $ \mathcal{X}^{N+\log }\times \mathbb{\ R}
\times \mathbb{\ R}$  into $\mathcal{Y}^{N-1}$. Hence
only the invertibility needs to be considered. In fact, the restricted
linear operator $D_{(f,\lambda)}\mathcal{F}^1(0,0,\lambda^*)$  is invertible if and only if the Jacobian determinant  in \eqref{det-j-poly:sqg} is  non-vanishing. Thus the desired result is obtained.
\end{proof}

\begin{theorem}\label{thm:polygon2}
  Let  $b_1,b_2,d_1,d_2\in (0,\infty)$ such that $d=d_2/d_1>0$ satisfies \eqref{det-j-poly:sqg}, and let $\gamma_0,\gamma_1\in \R\setminus\{0\}$ such that \eqref{non-degb} holds. Then
\begin{enumerate}[label=\rm(\roman*)]
\item There exists $\varepsilon_0 > 0$ and a neighborhood $\Lambda$ of $\lambda^*$ in $\R^2$ such that  $\mathcal{F}^1a$
can be extended to a $C^1$ mapping $(-\varepsilon_0,\varepsilon_0)\times  \mathcal{B}_X\times \Lambda \to \mathcal{Y}^{N-1}$.
\item $\mathcal{F}^1(0,0,\lambda^*)=0,
$
    where $\lambda^*=(\Omega^*_1,\gamma_2^*)$ is given by \eqref{posb} and \eqref{velb}.
\item  The linear operator
$D_{(f,\lambda)}\mathcal{F}^1(0,0,\lambda^*)\colon   \mathcal{X}^{N+\log}\times\R^2 \to  \mathcal{Y}^{N-1}$ is an isomorphism.
\item There exists $\varepsilon_1>0$ and a unique $C^1$ function $(f,\lambda)\colon (-\varepsilon_1,\varepsilon_1)\to   \mathcal{B}_X\times\R^2$ such that
\begin{equation}\label{gf1f2-polyb}
\mathcal{F}^1 \big(\varepsilon, f(\varepsilon),\lambda(\varepsilon)\big)=0,
\end{equation}
with $\lambda(\varepsilon)=\lambda^*+o(\varepsilon)$ and
\begin{gather}
f(\varepsilon)=(f_0(\varepsilon),f_1(\varepsilon),f_2(\varepsilon))=\frac{8}{3\pi}\,\Big(0,\frac{\varepsilon b_1\mathcal{H}_1^1}{\gamma_1d_1^{3}}\sin(x) ,\frac{\varepsilon b_2\mathcal{H}_2^1}{\gamma_2^* d_2^{3}}\sin(x)\Big)+o(\varepsilon),
\notag \\
\label{Qjalphab}
{\resizebox{.94\hsize}{!}{$\mathcal{H}_j^1:=
\gamma_0+ \displaystyle\sum_{\ell=1}^{2} \gamma_\ell\displaystyle\sum_{k=\delta_{\ell j}}^{m-1}\fint \frac{2d_jd_\ell\cos(2k\pi  /m-(\delta_{2\ell}-\delta_{2j})\vartheta\pi /m) -d_j^2-d_\ell^{2}\cos(2(2k\pi  /m-(\delta_{2\ell}-\delta_{2j})) }{(d_\ell^{2}+2d_jd_\ell\cos(2k\pi  /m-(\delta_{2\ell}-\delta_{2j})\vartheta\pi /m)-d_j^{2})^{\frac{5}{2}}},$}}
\end{gather}
\item For all  $\varepsilon\in (-\varepsilon_1,\varepsilon_1)\setminus\{0\}$,  the domains $\mathcal{O}_j^\varepsilon$, whose boundaries are given by the conformal parametrizations $R_j^\varepsilon(x)=1+\varepsilon|\varepsilon| b_j^{2} f_j(x)\colon\mathbb{T}\to \partial \mathcal{O}_j^\varepsilon$,   are
 strictly convex.
\end{enumerate}
\end{theorem}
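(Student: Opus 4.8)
The plan is to prove Theorem~\ref{thm:polygon2} exactly along the lines of Theorem~\ref{thm:polygon}, now with $\alpha=1$, using the SQG-specific preparatory results Propositions~\ref{p1b}, \ref{lem2-3b}, \ref{criticalb}, \ref{lem2-4b} in place of their $\alpha\in(1,2)$ counterparts. First I would dispose of item (i): the $C^1$ extension of $\mathcal{F}^1$ to $\varepsilon\le 0$ is Propositions~\ref{p1b} and \ref{lem2-3b}, and it remains only to check that $\mathcal{F}^1$ respects the symmetry making it a map $(-\varepsilon_0,\varepsilon_0)\times\mathcal{B}_X\times\Lambda\to\mathcal{Y}^{N-1}$. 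Concretely, assuming $f_1,f_2$ even one checks that $\mathcal{F}^1_1,\mathcal{F}^1_2$ are odd — for $\mathcal{F}^1_{j1}$ this is read off \eqref{fj1} with $\alpha=1$, and for the remaining pieces by the change of variables $y\mapsto -y$ and oddness of $\sin$; assuming $f_0(Q_{2\pi/m}x)=Q_{2\pi/m}f_0(x)$ one checks $\mathcal{F}^1_0(\varepsilon,f,\lambda)(Q_{2\pi/m}x)=\mathcal{F}^1_0(\varepsilon,f,\lambda)(x)$, using that $Q_{2\pi/m}$ preserves lengths and areas so that the expansion \eqref{2-21} of $\mathcal{F}^1_0$ is invariant. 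These are verbatim the arguments in the proof of Theorem~\ref{thm:polygon}. Item (ii) is Proposition~\ref{criticalb}: evaluating \eqref{2-21} at $(\varepsilon,f)=(0,0)$ produces \eqref{p-gG2b}, whose unique solution under \eqref{non-degb} is $\lambda^*=(\Omega^*_1,\gamma^*_2)$ with $\gamma^*_2\ne 0$ given by \eqref{posb}--\eqref{velb}. Item (iii) is Proposition~\ref{lem2-4b}: the linearization $D_{(f,\lambda)}\mathcal{F}^1(0,0,\lambda^*)$ is block upper-triangular, acting on each Fourier mode $a^j_n\sin(nx)$ as multiplication by the strictly positive increasing sequence $n\sigma_n$, $\sigma_n=\tfrac2\pi\sum_{i=1}^n\tfrac1{2i-1}$, up to the fixed nonzero scalars $\gamma_0,\gamma_1,\gamma^*_2$, together with a $2\times2$ block invertible precisely under \eqref{det-j-poly:sqg}; hence it is an isomorphism $\mathcal{X}^{N+\log}\times\R^2\to\mathcal{Y}^{N-1}$, with the explicit inverse recorded in Proposition~\ref{lem2-4b}.

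With (i)--(iii) in hand, item (iv) is an application of the implicit function theorem at $(0,0,\lambda^*)$, yielding $\varepsilon_1>0$ and a unique $C^1$ curve $\varepsilon\mapsto(f(\varepsilon),\lambda(\varepsilon))$ solving \eqref{gf1f2-polyb}. For the first-order expansion I would differentiate \eqref{gf1f2-polyb} in $\varepsilon$ at $\varepsilon=0$ to get $\partial_\varepsilon(f,\lambda)|_{\varepsilon=0}=-D_{(f,\lambda)}\mathcal{F}^1(0,0,\lambda^*)^{-1}\,\partial_\varepsilon\mathcal{F}^1(0,0,\lambda^*)$, then compute $\partial_\varepsilon\mathcal{F}^1(0,0,\lambda^*)$ directly: since $\mathcal{F}^1(\varepsilon,0,\lambda^*)=\varepsilon\mathcal{R}(\varepsilon)$ (the $\varepsilon$-independent part being the left side of \eqref{p-gG2b}, which vanishes at $\lambda^*$), one has $\partial_\varepsilon\mathcal{F}^1(0,0,\lambda^*)=\mathcal{R}(0)$, and $\mathcal{R}(0)$ is extracted by expanding the $\mathcal{O}_0$-interaction terms $\mathcal{F}^1_{j31},\mathcal{F}^1_{031}$ and the cross-interaction terms $\mathcal{F}^1_{j41}$ by the Taylor identity \eqref{taylor} pushed to second order in $\varepsilon$ around the base distances $A_{k\ell j}$ — exactly as in \eqref{eq:f03}--\eqref{eq:fj4}, now with $\alpha=1$ and $C_1=1$. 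After the $\fint$-averages against $\cos,\sin$ collapse, the leading term of the $\mathcal{O}_j$-equations takes the form $\mathrm{const}\cdot b_j\,\big(\gamma_0+\sum_{\ell=1}^2\gamma_\ell\sum_{k=\delta_{\ell j}}^{m-1}\fint(\cdots)\big)\sin(x)\cos(x)\in\mathcal{Y}_0^{N-1}$, the bracket being $\mathcal{H}^1_j$ of \eqref{Qjalphab} after rewriting $d_\ell^2+2d_jd_\ell\cos(\cdot)-d_j^2$ in terms of the $A_{k\ell j}$, while the $\mathcal{O}_0$-equation's $\varepsilon$-derivative vanishes by the $\sum_k$ cancellations forced by the $m$-fold symmetry. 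Since $\partial_\varepsilon\mathcal{F}^1(0,0,\lambda^*)$ carries no $\sin(x)$ component, feeding it through the inverse operator (which uses $\sigma_2=\tfrac2\pi(1+\tfrac13)=\tfrac8{3\pi}$ and the SQG analogue of \eqref{gam1hatg}) forces $\partial_\varepsilon\lambda|_{\varepsilon=0}=0$ and reproduces the displayed profile $f(\varepsilon)=\tfrac8{3\pi}\big(0,\tfrac{\varepsilon b_1\mathcal{H}^1_1}{\gamma_1 d_1^3}\sin x,\tfrac{\varepsilon b_2\mathcal{H}^1_2}{\gamma^*_2 d_2^3}\sin x\big)+o(\varepsilon)$. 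Item (v) is then a one-line curvature computation identical to the $\alpha\in(1,2)$ case: substituting $R^\varepsilon_j=1+\varepsilon|\varepsilon|b_j^2 f_j$ into the signed-curvature formula gives $\kappa_j(x)=\tfrac{1+O(\varepsilon)}{(1+O(\varepsilon))^{3/2}}>0$ uniformly in $x\in[0,2\pi)$ for $|\varepsilon|$ small, so each $\mathcal{O}^\varepsilon_j$ is strictly convex.

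The genuinely new work beyond quoting Propositions~\ref{p1b}--\ref{lem2-4b} is confined to the $\varepsilon$-derivative computation in (iv), and the hard part will be keeping the second-order Taylor expansion of the $|\,\cdot\,|^{-1}$ interaction kernels clean enough that the remainders are manifestly $o(\varepsilon)$ in the $\mathcal{Y}^{N-1}$ norm — this is where the logarithmic gain built into the space $\mathcal{X}^{N+\log}$ (rather than a bare $H^N$ bound) is essential, since at $\alpha=1$ the kernel sits exactly at the edge of integrability and the estimate of $\partial^{N-1}\mathcal{F}^1_{j2}$ only closes thanks to the finiteness of $\big\|\int_0^{2\pi}\frac{\partial^N f(x-y)-\partial^N f(x)}{|\sin(y/2)|}\,dy\big\|_{L^2}$. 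Everything else is a transcription with $\alpha=1$ of the $\alpha\in(1,2)$ argument, so no essentially new difficulty is expected.
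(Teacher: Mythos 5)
Your proposal follows the paper's own proof essentially verbatim: items (i)--(iii) are discharged by Propositions~\ref{p1b}, \ref{lem2-3b}, \ref{criticalb} and \ref{lem2-4b} together with the same oddness and $m$-fold symmetry checks, item (iv) by the implicit function theorem plus the second-order Taylor expansion of the interaction kernels yielding $\partial_\varepsilon\mathcal{F}^1_j(0,0,\lambda^*)\in\mathcal{Y}^{N-1}_0$ and hence $\partial_\varepsilon\lambda|_{\varepsilon=0}=0$ with the stated profile, and item (v) by the same curvature computation. The approach and the supporting results invoked coincide with those in the paper, so no further comparison is needed.
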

\begin{proof}
The case \( \alpha = 1 \) can be treated in a similar manner as the case \( \alpha \in (1,2) \) discussed in Theorem~\ref{thm:polygon}.
 The regularity of the nonlinear operator $\mathcal{F}^1 $ follows from Propositions~\ref{p1b} and \ref{lem2-3b}. The reflection symmetry property is also obtained for $f_j$ even function
\begin{equation*}
\mathcal{F}_j^1 (\varepsilon,f,\lambda)(-x)=-\mathcal{F}_j^1 (\varepsilon,f,\lambda)(x).
\end{equation*}
Thus, it remains to check the $m$-fold  symmetry  property of $\mathcal{F}_0^1 $, namely, that if
\begin{equation}\label{m-fold}
{f_0(Q_{\frac{2\pi }{m}}x)}=Q_{\frac{2\pi }{m}} f_0(x), \quad \forall x\in \mathbb{T},
\end{equation}
then
\begin{equation*}
\mathcal{F}_0^1 (\varepsilon,f,\lambda)(Q_{\frac{2\pi }{m}}{x})=\mathcal{F}_0^1 (\varepsilon,f,\lambda)({x}), \quad \forall x\in \mathbb{T}.
\end{equation*}
From \eqref{f01}  and \eqref{m-fold} one has
\begin{equation*}
\begin{split}
& \mathcal{F}^1_{01}=\Omega  |\varepsilon |^{3 }b_{0}^{3
}f_{0}^{\prime }(x) ,
\end{split}
\end{equation*}
Same computations to the derivation of  \eqref{f02}, using  \eqref{m-fold} and the fact the rotation matrix $Q_{\frac{2\pi }{m}}$ preserves length and area one has  one gets
\begin{equation}
\begin{split}
&{\resizebox{.98\hsize}{!}{$\mathcal{F}^1_{02}(\varepsilon,f_0,\lambda)(Q_{\frac{2\pi }{m}}x)=	\frac{\gamma _0}{4}\displaystyle\fint\frac{f_0(Q_{\frac{2\pi }{m}}(x-y))\sin
(y)dy}{|\sin (\frac{y}{2})|}-\frac{\gamma _0}{2}\int
\!\!\!\!\!\!\!\!\!\;{}-{}\frac{(f_0^{\prime }(Q_{\frac{2\pi }{m}}x)-f_0^{\prime }(Q_{\frac{2\pi }{m}}(x-y)))\cos
(y)dy}{|\sin (\frac{y}{2})|}+\varepsilon |\varepsilon |\mathcal{R}%
_{02}(\varepsilon ,f_0),$}}\\
&
\qquad\qquad\qquad{\resizebox{.84\hsize}{!}{$= \frac{\gamma _0}{4}\displaystyle\fint\frac{f_0(x-y)\sin
(y)dy}{|\sin (\frac{y}{2})|}-\frac{\gamma_0}{4} \int
\!\!\!\!\!\!\!\!\!\;{}-{}\frac{(f_0^{\prime }(x)-f_0^{\prime }(x-y))\cos
(y)dy}{|\sin (\frac{y}{2})|}+\varepsilon |\varepsilon |\mathcal{R}%
_{02}(\varepsilon ,f_0),$}}\\
&
\qquad\qquad\qquad =\mathcal{F}^1_{02}(\varepsilon,f_0,\lambda)(x).
\end{split}
\end{equation}%
Similarly, we  have
\begin{equation*}
\begin{split}
\mathcal{F}^1_{03}(\varepsilon,f_0,\lambda)(Q_{\frac{2\pi }{m}}x)=\mathcal{F}^1_{03}(\varepsilon,f_0,\lambda)(x) .
\end{split}
\end{equation*}%
Then, by  \eqref{m-fold}, we also deduce that
\begin{equation*}
\mathcal{F}^1_0(\varepsilon, f_0)(Q_{\frac{2\pi }{m}}x)=\mathcal{F}^1_0(\varepsilon, f_0)(x).
\end{equation*}
Then, \eqref{m-fold} is satisfied. These concludes the proof of {\rm(i)}.
 The proof of {\rm(ii)} follows immediately from Proposition~\ref{lem2-4b}, \eqref{p-gG2b}, \eqref{posb} and \eqref{velb}.
In order to show {\rm(iii)} we  use  Proposition~\ref{equivalenceb}  and  \eqref{det-j-poly:sqg} to get,
 for all $h=(h_0,h_1,h_2)\in  \mathcal{X}^{N+\log}$ and  $(\dot\Omega,
\dot\gamma_2)\in\mathbb{R}^2$,
\begin{equation}\label{eq:devb}
D_{(f,\lambda)}\mathcal{F}^1(0,0,\lambda^*)\begin{pmatrix}
\dot\Omega\\
\dot\gamma_2\\
h
\end{pmatrix}= \begin{pmatrix}
0  & 0\\
d_1 & - \frac{1 }{2}\frac{ T_1^+( d ,\vartheta)}{d_1^2}\\
 d_2 & - \frac{1 }{2} \frac{ S_1}{2d_2^2}
\end{pmatrix}
\begin{pmatrix} \dot\Omega\\ \dot \gamma_2\end{pmatrix}\,\sin(x)
+\sum_{n=2}^\infty n\sigma_n\begin{pmatrix}
{\gamma_0}\, a_n^0 \\
{\gamma_1}\, a_n^1
\\
\gamma_2^*\, a_n^2
\end{pmatrix}\,\sin(nx),
\end{equation}
where $\sigma_n$ is given by \eqref{gammab}.
Proposition~\ref{lem2-4b} and the assumption \eqref{det-j-poly:sqg} then imply {\rm(iii)}.

\vspace{0.2cm}

The existence and uniqueness in {\rm(iv)} follow from the implicit function theorem. In order to compute the asymptotic behavior of the solution, we   differentiate \eqref{gf1f2-polyb} with respect to \( \varepsilon \) at the point \( (0, 0, \lambda^*) \) to obtain the following formula
\begin{equation}\label{compb}
\partial_\varepsilon \big(f(\varepsilon),\lambda(\varepsilon)\big)\big|_{\varepsilon=0}=-D_{(f,\lambda)} \mathcal{F}^1\big(0,0,\lambda^*)^{-1}\partial_\varepsilon \mathcal{F}^1(0,0,\lambda^*).
\end{equation}
For any $g\in \mathcal{Y}^{N-1}$ with the expansion
\begin{equation*}
g(x)=\sum_{n=1}^{\infty }%
\begin{pmatrix}
A_n^0  \\
A_n^1\\
A_n^2
\end{pmatrix}%
\sin (nx).
\end{equation*}%
with $A_n^0=0$ if $n$ is not a multiple of $m$, we have
\begin{equation}\label{eq:invb}
    \begin{split}
     &D_{(f,\lambda)}\mathcal{F}^1(0,0,\lambda^*)^{-1}g(x)=\\
     &
    {\resizebox{.98\hsize}{!}{$ \left(
\frac{1}{\gamma_0}\displaystyle\sum_{n=2}^\infty\frac{A_n^0}{n\sigma_n}\, \cos(nx),
\frac{1}{\gamma_1}\displaystyle\sum_{n=2}^\infty\frac{A_n^1}{n\sigma_n}\,\cos(nx),
\frac{1}{\gamma_2^*}\displaystyle\sum_{n=2}^\infty\frac{A_n^2}{n\sigma_n}\, \cos(nx)\, , \frac{1}{2d_2^2} \frac{ d ^{2}T_1^+( d ,\vartheta)A_0^2-\frac12{S_1}A_0^1}{\det\big(D_\lambda \mathcal P^1(\lambda^*)\big)},\frac{A_0^2d_1-A_0^1d_2}{\det\big(D_\lambda \mathcal P^1(\lambda^*)\big)}
\right)   $}}
    \end{split}
\end{equation}
where $\det\big(D_\lambda \mathcal P^1(\lambda^*)\big)$ was calculated in \eqref{det-j-poly:sqg}.
On the other hand, from   \eqref{2-21} we have
\begin{equation}\label{eq:fb}
 \begin{cases}
{\mathcal{F}}_0^{1}(\varepsilon,0,\lambda)&=\varepsilon\mathcal{R}_{0}(\varepsilon ,f),
\\
{\mathcal{F}}_1^{1}(\varepsilon,0,\lambda)&=\Omega d_1 \sin(x)-\frac{1}{2d_1^{2}}\Big[\gamma_0+\frac{\gamma_1}{2}S_1 +\gamma_2T_1^+( d ,\vartheta)\Big]\sin(x)+\varepsilon\mathcal{R}_{0}(\varepsilon ,f),
\\
{\mathcal{F}}_2^{1}(\varepsilon,0,\lambda)&=\Omega d_2 \sin(x) -\frac{1}{2d_2^{2}}\Big[\gamma_0+\gamma_1T_1^-( d ,\vartheta)+\frac{\gamma_2}{2}S_1 \Big]\sin(x)+\varepsilon\mathcal{R}_{0}(\varepsilon ,f).
\end{cases}
\end{equation}
Similarly to the derivation of the equations \eqref{eq:f03}-\eqref{eq:fj4} one gets
\[{\mathcal{F}}_{031}^{1}=\varepsilon\mathcal{R}_{031}(\varepsilon,f_\ell,f_0)\]
\begin{equation*}
   {\mathcal{F}}_{j31}^{1}=  -\frac{3 \gamma_0}{d_j^{3}} \varepsilon b_j\cos(x)\sin(x)+\varepsilon\mathcal{R}_{j31}(\varepsilon,f_0,f_j) .
\end{equation*}
\begin{equation*}
    \begin{split}
       {\mathcal{F}}_{j41}^{1} &=-\frac{1}{2}\sum_{\ell=1}^{2} \gamma_\ell\displaystyle\sum_{k=\delta_{\ell j}}^{m-1}\frac{1}{ b_\ell  } \displaystyle\fint \frac{B_{k\ell j}\sin(x-y+2k\pi  /m-(\delta_{2\ell}-\delta_{2j})\vartheta\pi /m)dy}{\left( A_{k\ell j}\right)^{\frac{3}{2}}}\\
&
 {\resizebox{.94\hsize}{!}{$\qquad+\frac{3 }{4}\displaystyle\sum_{\ell=1}^{2} \gamma_\ell\displaystyle\sum_{k=\delta_{\ell j}}^{m-1}\frac{1}{ b_\ell  } \fint \frac{\varepsilon B_{k\ell j}^2\sin(x-y+2k\pi  /m-(\delta_{2\ell}-\delta_{2j})\vartheta\pi /m)dy}{\left(A_{k\ell j}\right)^{\frac{5}{2}}}+\varepsilon\mathcal{R}_{j41}(\varepsilon,f_\ell,f_j)$}},
    \end{split}
\end{equation*}
Thus, we can conclude that
\begin{equation}
\label{f0dif-epsb}
 {\resizebox{.96\hsize}{!}{$\partial_\varepsilon \mathcal{F}^1_j(0, 0, \lambda^*)(x) = 3\displaystyle\sum_{\ell=1}^{2} \gamma_\ell\displaystyle\sum_{k=\delta_{\ell j}}^{m-1}b_j\fint\frac{2d_jd_\ell\cos(2k\pi  /m-(\delta_{2\ell}-\delta_{2j})\vartheta\pi /m) -d_j^2-d_\ell^{2}\cos(2(2k\pi  /m-(\delta_{2\ell}-\delta_{2j}))  }{(d_\ell^{2}+2d_jd_\ell\cos(2k\pi  /m-(\delta_{2\ell}-\delta_{2j})\vartheta\pi /m)-d_j^{2})^{\frac{5}{2}}} \sin(x)\cos(x).$}}
\end{equation}
Hence, for $\alpha=1$ we conclude that
\[
\partial_\varepsilon \mathcal{F}^1_j(0, 0, \lambda^*) \in \mathcal{Y}_0^{N-1}.
\]
Given that the linear operator
\( D_{f} \mathcal{F}^1(0, 0, \lambda^*) \colon \mathcal{X}^{N+\log} \to \mathcal{Y}_0^{N-1} \) is an isomorphism and, according to the hypothesis, the kernel of the Jacobian operator $D_{\lambda}{\mathcal{F}}^{1}(\lambda)$ is non-trivial, then we can combine \eqref{eq:devb}, \eqref{compb}, \eqref{f0dif-epsb}, and Proposition~\ref{lem2-4b} to deduce that
\[
\partial_\varepsilon \lambda(\varepsilon) \big|_{\varepsilon=0} = 0 ,
\]
and
\begin{equation}\label{eq:deriv}
{\resizebox{.98\hsize}{!}{$    \partial_\varepsilon f_{j}(\varepsilon) \big|_{\varepsilon=0}(x) = \frac{3}{\sigma_2 \gamma_j} \displaystyle\sum_{\ell=1}^{2} \gamma_\ell\displaystyle\sum_{k=\delta_{\ell j}}^{m-1}b_\ell\fint \frac{2d_jd_\ell\cos(2k\pi  /m-(\delta_{2\ell}-\delta_{2j})\vartheta\pi /m) -d_j^2-d_\ell^{2}\cos(2(2k\pi  /m-(\delta_{2\ell}-\delta_{2j})) }{(d_\ell^{2}+2d_jd_\ell\cos(2k\pi  /m-(\delta_{2\ell}-\delta_{2j})\vartheta\pi /m)-d_j^{2})^{\frac{\alpha}{2}+2}}\sin(x).$}}
\end{equation}
Finally, simple calculations using \eqref{gammab} yields  and combining the two last identities with \eqref{compb}, \eqref{eq:invb} and \eqref{eq:deriv} yields
\begin{equation*}
\partial_\varepsilon \big( f(\varepsilon),\lambda(\varepsilon)\big)\big|_{\varepsilon=0}=\frac{8}{3\pi}\Big(0,\frac{b_1\mathcal{H}_1^0}{\gamma_1d_1^{3}}\sin(x),\frac{b_2\mathcal{H}_2^0}{\gamma_2^* d_2^{3}}\sin(x)\, ,0,0 \Big).
\end{equation*}
Thus, we conclude  the proof of {\rm (iv)}.

The convexity in {\rm (v)} is an straightforward computation. Hence, we omit the proof. Thus the
proof of Theorem \ref{thm:polygon2} is completed.
\end{proof}

\section*{Acknowledgments}
\noindent  EC  was supported by FAPESP through grant 2021/10769-6, Brazil. LCFF was supported by CNPq through grant 312484/2023-2, Brazil.

\thispagestyle{empty} \vspace{0.5cm}

\noindent\textbf{Conflict of interest statement:} The authors declare that
they have no conflict of interest.

\

\noindent\textbf{Data availability statement:} This manuscript has no
associated data.

\vspace{0.3cm}

\addcontentsline{toc}{section}{References}


\phantom{s} \thispagestyle{empty}


\begin{thebibliography}{99}

{\small{

\bibitem{Aref}
H. Aref.
\newblock Vortex crystals.
\textit{Advances in applied Mechanics}, 39:2--81, 2003.


\bibitem{B-C} A. L. Bertozzi, P. Constantin, {\it Global regularity for vortex patches}.  Comm. Math. Phys. 152 (1993), no.1, 19--28.

\bibitem{Bur} J.\ Burbea,  Motions of vortex patches, \textit{Lett. Math. Phys.},6 (1982),  1--16.

\bibitem{Bur2005MR2186035}
G.~R. Burton.
\newblock Global nonlinear stability for steady ideal fluid flow in bounded
  planar domains.
\textit{Arch. Ration. Mech. Anal.}, 176(2):149--163, 2005.


\bibitem{cao1} D. Cao, S. Lai and G. Qin. A note on the existence of stationary vortex patches for the SQG equation in bounded domain. \textit{Proc. Amer. Math. Soc.} 151 (2023), 4881-4891

\bibitem{cao2} D. Cao, G. Qin, W. Shan, C. Zou, Existence and regularity of
co-rotating and traveling-wave vortex solutions for the generalized SQG
equation\textit{,} \textit{Journal of Differential Equations,} 299 (2021),
429-462.

\bibitem{cao3} D. Cao, B. Fan and R. Li. Symmetry of uniformly rotating solutions for the vortex-wave system. Calc. Var. 64, 266 (2025).

\bibitem{cao4} D. Cao, G. Qin, W. Zhan, C. Zou. Existence of co-rotating and travelling vortex patches with doubly connected components for active scalar equations. \textit{ arXiv preprint arXiv:2103.04328v2}, (2024).

\bibitem{serrano3} A. Castro, D. C\'ordoba and J. G\'omez-Serrano, M. Zamora,
Remarks on geometric properties of SQG sharp fronts and $\alpha$-patches,
\textit{Discrete Contin. Dyn. Syst.} 34 (2014), no. 12, 5045–5059.

\bibitem{CCG16b}
A. Castro, D. C\'ordoba and J. G\'omez-Serrano,
Uniformly rotating analytic global patch solutions for active scalars.
\textit{Annals of PDE}, 2 (2016), no. 1, Art. 1.

	\bibitem{Cas1}
	A. Castro, D. C\'ordoba and J. G\'omez-Serrano, Existence and regularity of rotating global solutions for the generalized surface quasi-geostrophic equations,
	\textit{Duke Math. J.}, 165(2016), no. 5, 935--984.

 \bibitem{chae} D. Chae, P. Constantin, D. C\'{o}rdoba, F. Gancedo, J. Wu,
Generalized surface quasi-geostrophic equations with singular velocities,
\textit{Communications on Pure and Applied Mathematics,} 65 (8) (2012),
1037--1066.

 \bibitem{C} J.-Y. Chemin, {\it Fluides Parfaits Incompressibles}, , Ast\'erisque 230 (1995); {\it Perfect Incompressible Fluids } translated by I. Gallagher and D. Iftimie, Oxford Lecture Series in Mathematics and Its Applications, Vol. 14, Clarendon Press-Oxford University Press, New York (1998).

\bibitem{cordova} D. C\'{o}rdoba, M.A. Fontelos, A.M. Mancho, J.L. Rodrigo,
Evidence of singularities for a family of contour dynamics equations,
\textit{Proc. Natl. Acad. Sci.,} USA 102 (2005), 5949--5952.


\bibitem{Cor-Zo21} D. C\'ordoba and L. Mart\'inez-Zoroa,
Non existence and strong ill-posedness in $C^{k}$ and sobolev spaces for SQG.
Adv. Math., \textbf{407} (2022), Art. no. 108570.

\bibitem{Cor-Zo22} D. C\'ordoba and L. Mart\'inez-Zoroa,
Non-existence and strong ill-posedness in $C^{k,\beta}$ for the generalized surface quasi-geostrophic equation. \textit{Commun. Math.} Phys. 405, 170 (2024).


\bibitem{edison2} E. Cuba and L. C. F. Ferreira, Existence of asymmetric vortex patch for the generalized SQG equations, \textit{SIAM J. Math. Anal.,}  57 (1) (2025), 118--152.


\bibitem{DZ78} G. S. Deem and N. J. Zabusky,
Vortex waves: stationary ``V-states", interactions, recurrence, and breaking.
\textit{Phys. Rev. Lett.}, 40 (1978), no. 13, 859--862.


\bibitem{del2016MR3507551}
F. De la Hoz, T. Hmidi, J. Mateu, and J. Verdera.
\newblock Doubly connected {$V$}-states for the planar {E}uler equations.
\textit{ SIAM J. Math. Anal.}, 48(3):1892--1928, 2016.

\bibitem{DHHM} F. De la Hoz, Z. Hassainia, T. Hmidi and J. Mateu,
An analytical and numerical study of steady paches in the disc.
\textit{Analysis} \& \textit{PDE},  9 (2016), no. 7, 1609--1670.


    	\bibitem{de1}
	F. de la Hoz, Z. Hassainia and T. Hmidi, Doubly connected V-states for the generalized surface quasigeostrophic equations, \textit{Arch. Ration. Mech. Anal.}, 220(3)(2016), 1209--1281.



\bibitem{Dritschel} D. G. Dritschel, A general theory for two-dimensional vortex interactions, \textit{J. Fluid Mech.,} 293 (1995), 269-303.


\bibitem{Gancedo-1} F. Gancedo, N. Patel, On the local existence and blow-up
for generalized SQG patches. \textit{Ann. PDE,} 7 (1) (2021), Paper No. 4, 63
pp.

	
\bibitem{Gan08}
F. Gancedo, Existence for the $\alpha$-patch model and the QG sharp front in Sobolev spaces.
\textit{Adv. Math.}, 217 (2008), no. 6, 2569--2598.

\bibitem{Gar2021MR4284365}
C. Garc\'{\i}a.
\newblock Vortex patches choreography for active scalar equations.
\textit{ J. Nonlinear Sci.}, 31(5):Paper No. 75, 31, 2021.

\bibitem{G-Kar}
C. Garc\'{\i}a.
\newblock K\'{a}rm\'{a}n vortex street in incompressible fluid models.
\textit{Nonlinearity}, 33(4):1625--1676, 2020.

\bibitem{Garcia}
C. Garc\'{\i}a and S. V. Haziot, Global bifurcation for corotating and counter-rotating vortex pairs. \textit{Comm.
Math. Phys.}, 402 (2023), 1167–1204.

\bibitem{GGS}
L. Godard-Cadillac, P. Gravejat, and D. Smets.
\newblock Co-rotating vortices with n fold symmetry for the inviscid surface
  quasi-geostrophic equation.
\textit{ Indiana Univ. Math. J.} 72 No. 2 (2023), 603–650

\bibitem{serrano4} J. G\'{o}mez-Serrano, J. Park and J. Shi, Existence of Non-trivial Non-concentrated Compactly
Supported Stationary Solutions of the 2D Euler Equation with Finite Energy,  Memoirs of the AMS, to appear.

  \bibitem{Serr}
    J. Gómez-Serrano, J. Park, J. Shi and Y. Yao, Symmetry in stationary and uniformly rotating solutions of active scalar equations,  \textit{Duke Math. J.}, 170(13) (2021), 2957-3038.


\bibitem{Gom}
J. G\'{o}mez-Serrano,
\newblock On the existence of stationary patches.
\textit{ Adv. Math.}, 343:110--140, 2019.


\bibitem{Has2020MR4156612}
Z. Hassainia, N. Masmoudi, and M.~H. Wheeler.
\newblock Global bifurcation of rotating vortex patches.
\textit{Comm. Pure Appl. Math.}, 73(9):1933--1980, 2020.

\bibitem{HH2}
Z. Hassainia and T. Hmidi.
\newblock Steady asymmetric vortex pairs for euler equations.
\textit{ Discrete Contin. Dyn. Syst.}, 41(4):1939--1969, 2021.

\bibitem{multipole} Z. Hassainia, M. Wheeler, Multipole Vortex Patch
Equilibria for Active Scalar Equations. \textit{SIAM J. Math. Anal.,} 54 (6)
(2022), 6054--6095.

\bibitem{HH15}
Z. Hassainia and T. Hmidi, On the V-states for the generalized quasi-geostrophic equations.
\textit{Comm. Math. Phys.}, 337 (2015), no. 1, 321--377.

\bibitem{Hmidi-Mateu}
T. Hmidi and J. Mateu.
\newblock Existence of corotating and counter-rotating vortex pairs for active
  scalar equations.
\newblock {\textit{ Comm. Math. Phys.}}, 350(2):699--747, 2017.




    \bibitem{hmidi2}
    T. Hmidi, J. Mateu and J. Verfera, Boundary regularity of rotating vortex patches,  \textit{Arch. Ration. Mech. Anal.} 209 (2013), no. 1, 171-208.

    \bibitem{HMV}
    T. Hmidi, J. Mateu and J. Verfera, On rotating doubly connected vortices,  \textit{J. Differential Equations}, 258 (2015), no. 4, 1395-1429.

    \bibitem{vicol} P. Isett and V. Vicol, H\"{o}lder continuous solutions of active scalar equations, \textit{Ann. PDE}, 1, 2 (2015).

\bibitem{Keady}
G.~Keady.
\newblock Asymptotic estimates for symmetric vortex streets.
\newblock {\textit{J. Austral. Math. Soc. Ser. B}}, 26(4):487--502, 1985.

\bibitem{Kirch}
G. Kirchhoff, \textit{Vorlesungen uber Mathematische Physik}, Leipzig (1874).


\bibitem{KRYZ}
A. Kiselev, L. Ryzhik, Y. Yao and A. Zlato$\check{s}$, Finite time singularity for the modified SQG patch equation.
\textit{Ann. Math.}, 184 (2016), no. 3, 909--948.


\bibitem{KYZ17}
A. Kiselev, Y. Yao and A. Zlato$\check{s}$, Local regularity for the modified SQG patch equation.
\textit{Comm. Pure Appl. Math.}, 70 (2017), no. 7, 1253--1315.

\bibitem{KL23a} A. Kiselev and X. Lue, Illposedness of $C^2$ vortex patches.
\textit{Arch. Rat. Mech. Anal.}, 247 (2023), Art. no. 57.

\bibitem{KL23} A. Kiselev and X. Luo, The $\alpha$-SQG patch problem is ill-posed in $C^{2,\beta}$ and $W^{2,p}$. \textit{Comm. Pure Appl. Math.} 78:742–820, (2025).


\bibitem{Resnick} S. Resnick, {Dynamical problems in nonlinear advective partial differential equations},
\textit{Ph.D.thesis}, University of Chicago, 1995.


\bibitem{Rod05} J. L. Rodrigo, On the evolution of sharp fronts for the quasi-geostrophic equation.
\textit{Comm. Pure Appl. Math.}, 58 (2005), no. 6, 821--866.

\bibitem{Ros}
M. Rosenzweig.
\newblock Justification of the point vortex approximation for modified surface
  quasi-geostrophic equations.
\textit{SIAM J. Math. Anal.}, 52(2), 1690--1728, 2020.

\bibitem{SS}
P.~G. Saffman and R.~Szeto.
\newblock Equilibrium shapes of a pair of equal uniform vortices.
\newblock {\textit{Phys. Fluids}}, 23(12):2339--2342, 1980.


\bibitem{SD19} R. K. Scott and D. G. Dritschel, Scale-invariant singularity of the surface quasigeostrophic patch.
\textit{Journal of Fluid Mechanics}, 863 (2019), Art. R2, pp. 12.

\bibitem{T}
B. Turkington.
\newblock Corotating steady vortex flows with {$N$}-fold symmetry.
\newblock {\textit{Nonlinear Anal.}}, 9(4):351--369, 1985.

\bibitem{Wan}
Y.~H. Wan.
\newblock Desingularizations of systems of point vortices.
\textit{ Phys. D}, 32(2):277--295, 1988.


\bibitem{Yud} V. I. Yudovich, Non-stationnary flows of an ideal
incompressible fluid, \textit{Zhurnal Vych Matematika,} 3 (1963), 1032--1106.

}}

\end{thebibliography}
\end{document}